\pgfplotsset{compat=newest}
\definecolor{darkgreen}{rgb}{0,0.5,0}
\theoremstyle{plain}
\newtheorem{theorem}{Theorem}
\numberwithin{theorem}{section}
\newtheorem{lemma}[theorem]{Lemma}
\newtheorem{conj}[theorem]{Conjecture}
\newtheorem*{conj*}{Conjecture}
\newtheorem{cor}[theorem]{Corollary}
\newtheorem{prop}[theorem]{Proposition}
\newtheorem*{lemma*}{Lemma}
\theoremstyle{definition}
\newtheorem{remark}[theorem]{Remark}
\newtheorem{conjx}{Conjecture}
\newcommand{\R}{\mathbb{R}}
\newcommand{\N}{\mathbb{N}}
\newcommand{\Z}{\mathbb{Z}}
\newcommand{\eps}{\varepsilon}
\newcommand{\floor}[1]{\left\lfloor #1 \right\rfloor}
\newcommand{\ceil}[1]{\left\lceil #1 \right\rceil}
\newcommand{\card}[1]{\# {#1}}
\begin{document}

\title[Partition functions and perfect powers: a heuristic approach]{Repellent properties of perfect powers on partition functions: a heuristic approach}

\begin{abstract}
In 2013, Sun conjectured that the partition function $p(n)$ is never a perfect power for $n \geq 2$. Building on this, Merca, Ono, and Tsai recently observed that for any fixed integers $d \geq 0$ and $k \geq 2$, there appear to be only finitely many integers $n$ such that $p(n)$ differs from a perfect $k$th power by at most $d$. Denoting by $M_k(d)$ the largest such $n$, they conjectured that $M_k(d) = o(d^\epsilon)$ for every $\epsilon > 0$.

In this paper, we investigate the asymptotic growth of analogs of $M_k(d)$ for a wide class of partition functions. We establish sharp lower bounds and provide heuristics which suggest that $M_k(d)$ in fact grows polylogarithmically in $d$, i.e. of order $\log^2(d)$. More generally, we prove that if $f(n)$ is a suitably random chosen function with asymptotic growth rate similar to that of $p(n)$, then the set of integers $n$ for which $f(n)$ is a perfect power is finite with probability 1. 
\end{abstract}

\author{Summer Haag}
\address{University of Colorado Boulder \\ Boulder \\ Colorado \\ USA}
\email{summer.haag@colorado.edu}

\author{Praneel Samanta}
\address{Department of Mathematics, University of Kentucky, Lexington, KY 40506, USA}
\email{praneel.s@uky.edu}

\author{Swati}
\address{Department of Mathematics, University of South Carolina, Columbia, SC 29208, USA}
\email{s10@email.sc.edu}

\author{Holly Swisher}
\address{Department of Mathematics, 368 Kidder Hall, Oregon State University, Corvallis, OR 97331, USA}
\email{swisherh@oregonstate.edu}

\author{Stephanie Treneer}
\address{Department of Mathematics, Western Washington University, Bellingham, WA 98225, USA}
\email{trenees@wwu.edu}

\author{Robin Visser}
\address{Charles University \\ Faculty of Mathematics and Physics \\ Department of Algebra \\ Sokolovsk\'{a} 83 \\ 186 75 Praha 8 \\ Czech Republic}
\email{robin.visser@matfyz.cuni.cz}

\date{\today}

\thanks{The authors were supported in part by NSF grants DMS-2418528 and DMS-2201085 through the 2025 Rethinking Number Theory Workshop.  The fourth author was also supported in part by NSF grant DMS-2101906. R. V. was also supported by {Charles University} programme PRIMUS/24/SCI/010 and {Charles University} programme UNCE/24/SCI/022.}

\keywords{Partition function, perfect powers, probabilistic models}

\subjclass[2020]{11P82, 05A17}

\maketitle

\markboth{Partition functions and perfect powers: a heuristic approach}{HAAG, SAMANTA, SWATI, SWISHER, TRENEER, VISSER}

\section{Introduction}

For each positive integer $n$, let $p(n)$ be the number of \emph{partitions} of $n$, that is the number of non-increasing sequences of positive integers $(\lambda_1, \lambda_2, \dots, \lambda_k)$ such that $\lambda_1 + \lambda_2 + \dots + \lambda_k = n$.  We further define $p(0)=1$.  In 1751, Euler obtained the following infinite product generating function for $p(n)$, 
\begin{equation}
    \sum_{n \geq 0} p(n) q^n = \prod_{n \geq 1} \frac{1}{1 - q^n},
\end{equation}
and in 1918, Hardy and Ramanujan \cite{HardyRamanujan} pioneered the circle method to prove the following asymptotic formula
\begin{equation}\label{eq:HRasymptotic}
    p(n) \sim \frac{1}{4 n \sqrt{3}} \exp{\Bigg(\pi \sqrt{\frac{2n}{3}} \Bigg)} \; \text{ as } n \to \infty.
\end{equation}
This was later improved by Rademacher \cite{Rademacher}, who obtained an exact infinite series that converges to $p(n)$.  

The partition function has long been an object of broad and sustained interest, with connections to many areas of mathematics including mathematical physics (see \cite{Andrews, AndrewsEriksson, Apostol} for example).  And it continues to inspire intriguing observations. In 2013, Zhi-Wei Sun \cite{OEIS_A41} made the following conjecture (see also \cite{Sun_MathOverflow}, \cite[Conjecture~8.9~(iii)]{Sun_book} and \cite{Amdeberhan, Freddie}).

\begin{conjx}[Sun] \label{conj:sun} The partition function $p(n)$ is never a perfect power of the form $a^b$ for integers $a, b > 1$.
\end{conjx}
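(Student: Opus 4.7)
My plan is to reduce Conjecture \ref{conj:sun} to an $\ell$-adic divisibility statement. If $p(n) = a^b$ for integers $a, b \geq 2$, then every prime $\ell$ dividing $p(n)$ satisfies $b \mid v_\ell(p(n))$, so it suffices to exhibit, for each $n \geq 2$, some prime $\ell$ with $v_\ell(p(n)) = 1$ exactly. I would pursue this target via a covering system of congruences on $n$, each one paired with a prime that divides $p(n)$ to first order on the corresponding residue class.

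First, I would exploit the Ramanujan congruences $p(5n+4) \equiv 0 \pmod 5$, $p(7n+5) \equiv 0 \pmod 7$, $p(11n+6) \equiv 0 \pmod{11}$, and their refinements by Atkin, Ahlgren--Ono, and Ahlgren--Boylan built on half-integral weight $\ell$-adic modular forms. The goal is to upgrade each divisibility $\ell \mid p(n)$ to the exact statement $v_\ell(p(n)) = 1$ by controlling $p(n) \pmod{\ell^2}$ along the progression and showing that the sub-progression lying in $0 \pmod{\ell^2}$ is properly thinner, possibly iterating at higher $\ell^j$ to push any exceptional set to density zero.

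Second, for residue classes untouched by Ramanujan-type congruences, I would invoke Ono's theorem on the distribution of $p(n) \pmod \ell$ for primes $\ell \geq 5$, together with subsequent equidistribution refinements, to generate further congruences $n \equiv a_i \pmod{m_i}$ each carrying a guaranteed first-order divisor $\ell_i$. The aim is to assemble finitely many such congruences whose union is cofinite in $\mathbb{N}$, after which Sun's conjecture reduces to a finite numerical check via Rademacher's exact formula.

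The principal obstacle lies in the first step: present techniques establish divisibility $\ell \mid p(n)$ much more readily than they control the exact valuation $v_\ell(p(n))$, since the latter requires Hecke-theoretic information modulo $\ell^2$, where the relevant Galois representations lose much of their rigidity. A secondary difficulty is that no known equidistribution result is uniform enough in $n$ to exclude sporadic coincidences in which $p(n)$ happens to be powerful at every one of its prime divisors. These twin obstructions are essentially why Conjecture \ref{conj:sun} remains open, and they motivate the probabilistic framework pursued in the remainder of this paper: one argues heuristically that among functions with the growth rate of $p(n)$, perfect-power values occur with probability zero, giving strong evidence for the conjecture even in the absence of a proof.
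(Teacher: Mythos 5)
This statement is labelled as a conjecture in the paper for good reason: it is Sun's conjecture, which the paper explicitly states remains open (verified only computationally for $n \leq 10^8$), and the paper offers no proof of it — only probabilistic evidence via the heuristic model (part (b) of Theorem \ref{thm:introheuristic} and Corollary \ref{cor:finitely_many_powers}). Your proposal is therefore not comparable to a proof in the paper, and, as you yourself concede in your final paragraph, it is a strategy sketch rather than a proof. Both of its load-bearing steps fail concretely.

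First, the valuation step. Ramanujan-type congruences give information about $p(n)$ modulo a fixed prime power, never about the exact $\ell$-adic valuation $v_\ell(p(n))$ of an individual value. Indeed, the Watson--Atkin congruences $p(5^j n + \delta_{5,j}) \equiv 0 \pmod{5^j}$ show that arbitrarily high powers of $5$ do divide $p(n)$ along sub-progressions, so "upgrading divisibility to $v_\ell = 1$" cannot work uniformly on a full residue class; and controlling $p(n) \bmod \ell^2$ does not determine $v_\ell(p(n))$ when $p(n) \equiv 0 \pmod{\ell^2}$. Second, the covering step. The congruences produced by Ono's theorem and its refinements lie in extremely sparse arithmetic progressions (moduli growing with powers of $\ell$), and their union is nowhere near cofinite; by Ahlgren--Boylan, the only congruences of the form $p(\ell n + \beta) \equiv 0 \pmod \ell$ are Ramanujan's three, and it is expected that for each prime $\ell$ the set of $n$ with $\ell \nmid p(n)$ has positive density (even the parity of $p(n)$ is not understood). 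So no finite covering system of the required type is available, and the residual set could not be reduced to a finite check. These are not gaps that further effort along the same lines would close with present technology; they are the reason the conjecture is open, which is precisely why the paper adopts the probabilistic model of Sections \ref{sec:model}--\ref{sec:heuristics} instead of attempting a proof.
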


This conjecture has been verified for all $n \leq 10^8$ by Alekseyev \cite{Sun_MathOverflow}, but still remains an open problem.  

It's natural to consider whether analogs of Sun's conjecture hold for various different partition functions $f(n)$.  One category of restricted partition functions can be described by $p_A(n)$, the number of partitions of $n$ with parts taken from the set $A\subset\N$. The generating function for $p(A)$ is
\begin{equation*}
    \sum_{n=0}^{\infty} p_A(n) x^n = \prod_{a \in A} \frac{1}{1 - x^a}.
\end{equation*}
Similar conjectures to Sun's conjecture regarding whether $p_A(N)$ can be a perfect square were formulated by Tengely and Ulas \cite[Section~5]{TengelyUlas}.  In the case where $A$ is a finite set consisting of the first $k$ positive integers $\{1, 2, \dots, k\}$, the analog of Sun's conjecture was recently resolved by Ono \cite{Ono2025}.
  
In recent work expanding upon Sun's conjecture for $p(n)$, Merca, Ono and Tsai \cite{MercaOnoTsai} consider the proximity of $p(n)$ to perfect powers more generally. For fixed $n\geq 1$ and $k>1$, let
\begin{equation*}
    \Delta_k(n) := \min \left\{ | p(n) - m^k | : m \in \Z \right\},
\end{equation*}
the distance from $p(n)$ to the closest $k$th power. Merca-Ono-Tsai make the following conjecture about $\Delta_k(n)$. 

\begin{conjx} \label{conj:MOT1}(\cite{MercaOnoTsai}, Conj. 1) For fixed integers $k>1$ and $d\geq 0$, there are at most finitely many $n$ for which $\Delta_k(n)\leq d$.\end{conjx}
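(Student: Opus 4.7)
The statement is a conjecture well beyond the reach of current unconditional techniques---its $d = 0$ case with $k = 2$ already refines Sun's Conjecture~\ref{conj:sun}---so the most one can realistically aim for is a heuristic in the spirit advertised by the abstract. The plan is to treat $p(n)$ as a ``random'' integer of size $\tilde p(n) := \frac{1}{4n\sqrt{3}}\exp\!\bigl(\pi\sqrt{2n/3}\bigr)$ dictated by \eqref{eq:HRasymptotic}, and to ask how often such a random integer can land within $d$ of a perfect $k$-th power.

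First I would record a gap estimate. Writing $m_n$ for the nearest integer to $p(n)^{1/k}$, the consecutive $k$-th powers $m_n^k$ and $(m_n+1)^k$ bracketing $p(n)$ are separated by $k\,m_n^{k-1} \asymp k\, p(n)^{(k-1)/k}$. Consequently the fraction of integers in a window of that length which lie within $d$ of some $k$-th power is
\[
\frac{2d+1}{k\, p(n)^{(k-1)/k}} \;\asymp\; (2d+1)\,\exp\!\left(-\tfrac{k-1}{k}\,\pi\sqrt{\tfrac{2n}{3}}\right).
\]
Second, I would postulate that $p(n)$, modulo this gap, is equidistributed; under that hypothesis the probability that $\Delta_k(n) \leq d$ is at most the displayed quantity. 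Third, since for every fixed $c > 0$ the series $\sum_{n} \exp(-c\sqrt{n})$ converges, a Borel--Cantelli argument applied to the resulting random model would produce, with probability one, only finitely many $n$ satisfying $\Delta_k(n) \leq d$ for any fixed $d$ and $k > 1$.

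The main obstacle is precisely the equidistribution hypothesis in the second step: $p(n)$ is a deterministic sequence, and although its Ramanujan-type congruences are well understood on small moduli, nothing is known about its distribution modulo a quantity as enormous as $k\,p(n)^{(k-1)/k}$. A natural intermediate target would be to prove unconditionally that $\Delta_k(n) \to \infty$, or better that $\Delta_k(n) \gg p(n)^{c_k}$ for some $c_k > 0$; either would yield the conjecture. Progress on any such bound appears to demand effective Diophantine input about $p(n)$ that is not currently available, which is precisely why the present paper develops a probabilistic model and establishes sharp lower bounds rather than aiming at an unconditional proof.
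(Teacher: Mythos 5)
You correctly identify that this statement is an open conjecture with no unconditional proof in the paper, and the heuristic you sketch — a uniform random model of $p(n)$ around the Hardy--Ramanujan main term, the density $(2d+1)/(k\,p(n)^{(k-1)/k})$ of integers within $d$ of a $k$-th power, summability of $\exp(-c\sqrt{n})$, and Borel--Cantelli — is essentially identical to the paper's treatment in Sections~\ref{sec:model}--\ref{sec:heuristics} (Lemma~\ref{lem:prob_delta}, Corollary~\ref{cor:prob_delta}, and Lemma~\ref{lem:Mkdprob}, which give Theorem~\ref{thm:introheuristic}(a)). Your assessment of the obstruction, namely the unproved equidistribution hypothesis (cf.\ Conjecture~\ref{conj: equidistribution}), also matches the paper's framing.
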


Assuming Conjecture \ref{conj:MOT1}, there is a well-defined maximal integer $n$ for which $p(n)$ is within $d$ of a $k$th power. Let
\begin{equation}
    M_k(d) := \max \{ n \in \N : \Delta_k(n) \leq d \}.
\end{equation}
Then a proof of Sun's conjecture would be equivalent to showing that $M_k(0)=1$ for all $k>1$. Merca-Ono-Tsai give a table of conjectured values of $M_k(d)$, but proving even a single value would be an achievement, since none are currently known. In this paper we investigate the growth of $M_k(d)$ for fixed $k$ as well as for fixed $d$, motivated by the following conjectures of Merca-Ono-Tsai. 

If we fix $k>1$, Merca-Ono-Tsai predict that for $p(n)$, $\{M_k(d)\}_{d=1}^\infty$ grows more slowly than any power of $d$.

\begin{conjx} \label{conj:MOT2} (\cite{MercaOnoTsai}, Conj. 2) For each integer $k>1$ and every real $\eps>0$, we have that $M_k(d)=o(d^{\eps})$.\end{conjx}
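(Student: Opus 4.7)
To prove $M_k(d)=o(d^{\eps})$ it suffices to establish any bound of the shape $M_k(d) = O((\log d)^C)$, so I will aim directly for the conjecturally sharp order $(\log d)^2$. The plan combines the high-precision asymptotic expansion of $p(n)$ coming from Hardy--Ramanujan--Rademacher with a counting argument driven by the density of $k$-th powers, and attempts to rigidify the heuristic using arithmetic information on $p(n)$ modulo prime powers.

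\textbf{Density count (Step 1).} By \eqref{eq:HRasymptotic}, $p(n) \asymp n^{-1}\exp(c_1 \sqrt n)$ with $c_1 = \pi\sqrt{2/3}$. Consecutive $k$-th powers near $p(n)$ are spaced at distance $\asymp k\,p(n)^{(k-1)/k}$, so if $p(n)$ were equidistributed on that scale then the expected number of $n \geq N_0$ with $\Delta_k(n) \leq d$ would be at most
\[
\sum_{n \geq N_0}\frac{2d+1}{k\,p(n)^{(k-1)/k}} \;\ll\; d\int_{N_0}^{\infty}\exp\!\Bigl(-\tfrac{(k-1)c_1}{k}\sqrt n\Bigr)\,dn,
\]
which is $o(1)$ as soon as $\sqrt{N_0} \gg \tfrac{k}{(k-1)c_1}\log d$. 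A Borel--Cantelli style conclusion would then give $M_k(d) \ll (\log d)^2$, a fortiori $o(d^{\eps})$. The target therefore is not the exponent but the legitimacy of the ``equidistribution'' hypothesis.

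\textbf{Rigorous replacement (Step 2).} To deterministically realize Step 1 I would aim for a pointwise lower bound $\Delta_k(n) \geq \exp(c_2\sqrt n)$ for some $c_2>0$ and all large $n$. The two ingredients are: (a) the Hardy--Ramanujan--Rademacher exact series with effective truncation error, which gives $p(n)^{1/k}$ to prescribed precision and in principle controls its distance to the nearest integer; and (b) the $\ell$-adic theory of $p(n)$ via Ramanujan-type congruences and their refinements (Ono, Ahlgren--Ono), which force $v_\ell(p(n))$ to take prescribed values on arithmetic progressions. Whenever $k \nmid v_\ell(p(n))$, ingredient (b) produces an $\ell$-adic obstruction yielding $\Delta_k(n) \geq \ell^{\lfloor v_\ell(p(n))/k\rfloor}$, from the $\ell$-adic distance to the nearest $k$-th power. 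Sieving across a growing set of primes $\ell$ as $n$ increases, one would hope to exclude $\Delta_k(n) \leq d$ for every $n$ outside a set of maximum $O((\log d)^2)$.

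\textbf{Main obstacle.} The crux, and where I expect this plan to fail without genuinely new input, is fusing (a) and (b) into a bound valid for \emph{every} $n$. For any fixed finite collection of primes $\ell$, the known congruences cover only a union of arithmetic progressions, leaving infinitely many residue classes of $n$ on which $v_\ell(p(n))$ is uncontrolled; on those classes the Rademacher series by itself provides no irrationality-type lower bound for $|p(n)-m^k|$, since such a bound would require transcendence theory for expressions built from truncated Kloosterman/eta--theta sums, far beyond Baker's method. Concretely, I cannot rule out---by present tools---even a single cancellation making $p(n)$ miraculously close to an integer $k$-th power for some large $n$ in a ``bad'' residue class. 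Bridging this gap is exactly the sense in which the heuristic ``$p(n)\bmod m^k$ is random uniformly in $m$'' must be proven, and I expect any genuine proof of Conjecture~\ref{conj:MOT2} to require either a new equidistribution theorem of that form or a transcendence statement for modular-form coefficients well beyond the current state of the art; accordingly my realistic deliverable would be the rigorous lower bound $M_k(d) \gg (\log d)^2$ and a matching heuristic/random-model upper bound, rather than an unconditional resolution.
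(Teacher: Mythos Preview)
This statement is a \emph{conjecture} in the paper, not a theorem: the authors do not prove it, and your proposal correctly arrives at the same conclusion. Your honest bottom line---``rigorous lower bound $M_k(d)\gg(\log d)^2$ plus a heuristic/random-model upper bound, rather than an unconditional resolution''---is precisely what the paper delivers (Theorem~\ref{thm:Mkd_intro} for the lower bound, Theorem~\ref{thm:introheuristic}(c) for the heuristic upper bound via the probabilistic model of Section~\ref{sec:model}).

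Your Step~1 is essentially the paper's approach: the density count you write down is the content of Lemma~\ref{lem:prob_delta} and Corollary~\ref{cor:prob_delta}, and your Borel--Cantelli conclusion is exactly Theorem~\ref{thm:mainasymptotic}. The paper formalizes this by replacing $p(n)$ with a uniform random variable $\mathbf{p}_n$ on an interval around the Hardy--Ramanujan main term, but the underlying computation is the same as yours.

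Your Step~2 (Rademacher truncation plus $\ell$-adic congruences to force a deterministic lower bound $\Delta_k(n)\geq\exp(c_2\sqrt n)$) is genuinely different---the paper does not attempt anything of this kind. Your own diagnosis of why it fails is accurate: the known Ramanujan-type congruences only cover a density-zero set of arithmetic progressions for any finite prime set, and on the complementary classes the Rademacher series gives no Diophantine lower bound without transcendence input that does not currently exist. So Step~2 is an interesting line of attack to record, but it does not close the gap, and you are right not to claim otherwise.
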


Here, we take a more general approach to these questions by considering any arithmetic function $f(n)$ with $f(n)\to\infty$ as $n\to\infty$ and defining analogous functions

\begin{align}
\Delta_{f,k}(n) &:= \min \left\{ | f(n) - m^k | : m \in \Z \right\}, \label{Deltafk} \\
M_{f,k}(d) &:= \max \{ n \in \N : \Delta_{f,k}(n) \leq d \}.  \label{Mfk}
\end{align}

In Proposition \ref{prop:Mkdlower} we prove a lower bound for $M_{f,k}(d)$ for fixed $k$ and sufficiently large $d$. In Proposition \ref{asy_Mk}, we are able to make this bound explicit when the asymptotic growth of $f(n)$ is comparable to that of $p(n)$. In particular, for $p(n)$, we obtain the following lower bound.

\begin{theorem} \label{thm:Mkd_intro}
    Let $k \geq 2$. We have for sufficiently large $d$,
    \begin{equation*}
        M_{k}(d) \geq \frac{3}{2 \pi^2}  \Big( \frac{k}{k-1} \Big)^2 (\log{d})^2 + O ( \log d \log \log d).
    \end{equation*}
\end{theorem}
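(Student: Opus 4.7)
The plan is to exhibit, for each sufficiently large $d$, an explicit integer $n$ of the claimed magnitude with $\Delta_k(n) \leq d$; this integer is then a witness to the inequality $M_k(d) \geq n$. The key elementary observation is that setting $N = \lfloor p(n)^{1/k} \rfloor$, the value $p(n)$ lies in the interval $[N^k,(N+1)^k)$ of length at most $k(N+1)^{k-1}$, so
\begin{equation*}
\Delta_k(n) \;\leq\; \tfrac{1}{2}\bigl((N+1)^k - N^k\bigr) \;\leq\; \tfrac{k}{2}\, p(n)^{(k-1)/k}\bigl(1 + O(p(n)^{-1/k})\bigr).
\end{equation*}
Consequently, every $n$ satisfying $\tfrac{k}{2} p(n)^{(k-1)/k} \leq d$ automatically satisfies $\Delta_k(n) \leq d$, and it suffices to lower bound the largest such $n$.

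To do so I invert this inequality using the Hardy-Ramanujan asymptotic \eqref{eq:HRasymptotic}. Taking logarithms yields $\log p(n) = \pi\sqrt{2n/3} - \log(4n\sqrt{3}) + o(1)$, so the condition $\tfrac{k-1}{k}\log p(n) \leq \log d - \log(k/2)$ rearranges to
\begin{equation*}
\pi\sqrt{\tfrac{2n}{3}} \;\leq\; \tfrac{k}{k-1}\log d + \log n + O(1).
\end{equation*}
Anticipating $n \asymp (\log d)^2$ gives $\log n = 2\log\log d + O(1)$, so the right-hand side simplifies to $\tfrac{k}{k-1}\log d + O(\log\log d)$. Squaring and isolating $n$ then yields
\begin{equation*}
n \;\leq\; \tfrac{3}{2\pi^2}\Bigl(\tfrac{k}{k-1}\Bigr)^{2}(\log d)^2 + O(\log d\, \log\log d),
\end{equation*}
the error coming from the cross term $2 \cdot \tfrac{k}{k-1}\log d \cdot O(\log\log d)$ produced when expanding the square. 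The largest integer $n$ for which $\tfrac{k}{2} p(n)^{(k-1)/k} \leq d$ is therefore of at least this magnitude, proving the theorem.

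The most delicate point will be the bookkeeping of error terms: one must check that the rounding loss from $\lfloor p(n)^{1/k}\rfloor$, the subexponential prefactor in \eqref{eq:HRasymptotic}, and the self-referential estimate $\log n = 2\log\log d + O(1)$ all collapse cleanly into the single error $O(\log d\, \log\log d)$. The cleanest organization is to deduce Theorem \ref{thm:Mkd_intro} as a direct corollary of the general bound on $M_{f,k}(d)$ in Proposition \ref{asy_Mk}, by substituting $f = p$ together with the exponential rate $\pi\sqrt{2/3}$ of growth of $\log p(n)$ supplied by \eqref{eq:HRasymptotic}; the abstract proposition will then absorb the routine error analysis.
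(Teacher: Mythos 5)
Your proposal is correct and follows essentially the same route as the paper: the half-gap bound $\Delta_k(n)\leq\frac12((N+1)^k-N^k)\leq(1+o(1))\frac{k}{2}p(n)^{(k-1)/k}$ is the paper's Lemma \ref{lem:Delta_bound} and Proposition \ref{prop:Delta_betterbound}, the reduction to the largest $n$ with $f(n)\lesssim(2d/k)^{k/(k-1)}$ is Proposition \ref{prop:Mkdlower}, and the inversion via the Hardy--Ramanujan asymptotic with the ansatz $\log n=2\log\log d+O(1)$ (equivalently $n=\frac{3}{2\pi^2}(\frac{k}{k-1})^2(\log d)^2(1+\beta_d)$ with $\beta_d\asymp\frac{\log\log d}{\log d}$) is exactly the proof of Theorem \ref{asy_Mk}, of which the paper indeed deduces Theorem \ref{thm:Mkd_intro} as the special case $a=\frac{1}{4\sqrt3}$, $b=1$, $\alpha=\pi\sqrt{2/3}$, $c=\frac12$. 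The error-term bookkeeping you flag is handled in the paper precisely as you anticipate, by absorbing the multiplicative $(1+\eps)$ losses into the $O(1)$ after taking logarithms.
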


Motivated by computational data discussed in Section \ref{sec:computations}, we make the following conjecture.  

\begin{conjx} \label{conj:Mkdasymp}
    Let $k \geq 2$.  Then $M_k(d) \sim \frac{3}{2 \pi^2} \Big( \frac{k}{k-1} \Big)^2 (\log{d})^2$ as $d \to \infty$.
\end{conjx}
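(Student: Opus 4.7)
The claim is an asymptotic equivalence $\sim$, so it requires both a lower bound and a matching upper bound on $M_k(d)/(\log d)^2$. My plan is in two parts, one rigorous and one necessarily heuristic, mirroring the paper's stated framework.

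For the lower bound, the plan is to invoke Theorem~\ref{thm:Mkd_intro} directly: dividing its conclusion by $(\log d)^2$ and letting $d \to \infty$ yields
\begin{equation*}
    \liminf_{d \to \infty} \frac{M_k(d)}{(\log d)^2} \;\geq\; \frac{3}{2\pi^2} \Bigl(\frac{k}{k-1}\Bigr)^2,
\end{equation*}
which accounts for one half of the equivalence.

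For the matching upper bound, which is the main obstacle, I would argue through the probabilistic model alluded to in the abstract. Since consecutive $k$th powers in a neighborhood of $p(n)$ are spaced by approximately $k \cdot p(n)^{(k-1)/k}$, modeling $p(n)$ as uniformly distributed modulo this local spacing predicts that $\Pr[\Delta_k(n) \leq d] \approx (2d+1)/(k\, p(n)^{(k-1)/k})$. Substituting the Hardy-Ramanujan asymptotic~\eqref{eq:HRasymptotic} and summing the tail from $n > N$ would give an expected count of the form
\begin{equation*}
    \sum_{n > N} \frac{2d+1}{k\, p(n)^{(k-1)/k}} \;\asymp\; d \cdot \sqrt{N} \, \exp\!\Bigl(-\tfrac{k-1}{k}\pi\sqrt{\tfrac{2N}{3}}\Bigr).
\end{equation*}
Imposing the Borel-Cantelli threshold (tail bounded by a constant) and solving for $N$ gives $\sqrt{N} \sim \tfrac{k}{k-1} \cdot \tfrac{1}{\pi}\sqrt{3/2} \cdot \log d$, i.e.\ $N \sim \frac{3}{2\pi^2}(\frac{k}{k-1})^2 (\log d)^2$, which matches the lower bound exactly. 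For the random function $f(n)$ of the abstract, this argument is rigorous: sampling $f(n)$ so that $f(n)$ has the correct growth rate and the residues $f(n) \bmod k f(n)^{(k-1)/k}$ are independent and uniform, Borel-Cantelli yields $M_{f,k}(d)/(\log d)^2 \to \frac{3}{2\pi^2}(\frac{k}{k-1})^2$ almost surely, establishing the analog of Conjecture~\ref{conj:Mkdasymp} for the random model unconditionally.

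The hard part, which I do not expect to overcome, is converting the heuristic upper bound into an unconditional statement for the deterministic function $p(n)$. Any such conversion would require quantitative equidistribution for the fractional parts of $p(n)^{1/k}$, or equivalently effective control on how $p(n)$ sits among nearby $k$th powers; even the $d=0$ case is Sun's Conjecture~\ref{conj:sun}, which remains open for all $k > 1$. Consequently my proposal establishes the lower half of Conjecture~\ref{conj:Mkdasymp} rigorously, matches the constant in the upper half via a probabilistic heuristic that is an unconditional theorem for the random model, and leaves the full asymptotic for $p(n)$ conditional on the randomness hypothesis — fitting the paper's stated heuristic approach.
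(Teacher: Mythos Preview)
This statement is labeled and treated in the paper as a \emph{conjecture}; there is no proof. The paper's supporting evidence is threefold: the unconditional lower bound of Theorem~\ref{thm:Mkd_intro}, the heuristic upper bound of Theorem~\ref{thm:introheuristic}(c) for the random model, and the numerics in Section~\ref{sec:computations}. Your proposal follows the same two-part architecture, and your lower-bound half is exactly the paper's.

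The substantive discrepancy is in the heuristic upper bound. You assert that Borel--Cantelli on the random model yields $\mathbf{M}_{\mathbf{f},k}(d)/(\log d)^2 \to \frac{3}{2\pi^2}\bigl(\frac{k}{k-1}\bigr)^2$ almost surely, matching the lower-bound constant. The paper does \emph{not} obtain this: Theorem~\ref{thm:mainasymptotic} (specialized to $\beta=1/2$) gives only
\[
\mathbf{M}_{\mathbf{f},k}(d) < (4+\eps)\cdot \frac{3}{2\pi^2}\Bigl(\frac{k}{k-1}\Bigr)^2(\log d)^2
\]
almost surely, so the random-model result is $\mathbf{M}_{\mathbf{f},k}(d)\asymp(\log d)^2$, not $\sim$. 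The factor $4=2^{1/\beta}$ arises because Borel--Cantelli over $d$ requires $\sum_d \mathbb{P}(E_d)<\infty$, hence the tail sum $\sum_{n>X_d}\mathbb{P}(\mathbf{\Delta}_{\mathbf{f},k}(n)\le d)$ must be $\ll d^{-1-\eta}$, not merely bounded by a constant as in your ``threshold'' step. Solving $d\cdot\exp\bigl(-\frac{k-1}{k}\pi\sqrt{2X_d/3}\bigr)\asymp d^{-1-\eta}$ forces the exponent to absorb roughly $2\log d$ rather than $\log d$, doubling $\sqrt{X_d}$ and hence quadrupling $X_d$.

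So your claim that the exact asymptotic is ``an unconditional theorem for the random model'' overshoots what your sketch (and the paper's argument) actually delivers. The paper regards the precise constant as genuinely conjectural even at the heuristic level, supported by the numerics of Figure~\ref{fig:pn_mkd} rather than by the probabilistic analysis.
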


When we instead fix $d$, Merca-Ono-Tsai assert that $M_k(d)$ stabilizes in a predictable way.  To state their conjecture, we first make the following definition. For each fixed integer $d\geq 0$, define $L(d)$ by
\begin{equation}\label{eq:Lp}
    L(d):=\max\{n : p(n)\leq d+1\}.
\end{equation} 

\begin{conjx} \label{conj:MOT3&4} (\cite{MercaOnoTsai}, Conj. 3 and 4) 
There is a positive integer $N_d$ such that for all $k\geq N_d$,
\[
M_k(d)=L(d). 
\]
\end{conjx}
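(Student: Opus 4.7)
The plan is to verify the two inequalities $L(d) \leq M_k(d)$ and $M_k(d) \leq L(d)$ separately: the former unconditionally, the latter only for all sufficiently large $k$. The lower bound is immediate: if $n \leq L(d)$, then $p(n) \leq d+1$ by definition of $L(d)$, so $|p(n) - 1^k| = p(n) - 1 \leq d$, giving $\Delta_k(n) \leq d$ and hence $n \leq M_k(d)$. Thus $L(d) \leq M_k(d)$ for every $k \geq 2$, with no hypothesis on $k$ needed.

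For the reverse direction, fix any $n > L(d)$, so $p(n) \geq d+2$. One checks that $|p(n) - m^k| \geq d+1$ whenever $m \in \{-1, 0, 1\}$; hence $\Delta_k(n) \leq d$ forces the existence of an integer with $|m| \geq 2$ and $m^k \in [p(n)-d,\, p(n)+d]$. This in turn gives $2^k \leq |m|^k \leq p(n) + d$, i.e., $k \leq \log_2(p(n)+d)$. Setting $K(n) := \lceil \log_2(p(n)+d) \rceil + 1$, we conclude that $\Delta_k(n) > d$ for every $k \geq K(n)$, so each fixed $n > L(d)$ is eventually ruled out as $k$ grows.

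The main obstacle is upgrading this pointwise statement into a uniform threshold $N_d$ valid for all $n > L(d)$ simultaneously. The cleanest route is to produce $B(d) < \infty$, independent of $k$, such that no $n > B(d)$ can contribute to $M_k(d)$ for any sufficiently large $k$; then $N_d := \max_{L(d) < n \leq B(d)} K(n)$ would close the argument. Producing such a $B(d)$ amounts to showing that the set
\[
\bigl\{\, n > L(d) \;:\; p(n) \in [m^k - d,\, m^k + d] \text{ for some integers } |m| \geq 2 \text{ and } k \geq 2 \,\bigr\}
\]
is finite, which is a strengthening of Conjecture~\ref{conj:MOT1} and, when $d = 0$, specializes to Sun's Conjecture~\ref{conj:sun}. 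Heuristically the finiteness is plausible: by Hardy--Ramanujan, near $p(n) \approx 2^k$ the gap $p(n+1) - p(n)$ is on the order of $2^k/k$, so modelling $p(n)$ as uniformly distributed within its cell gives probability $O(dk/2^k)$ of landing within $d$ of $2^k$; summing the analogous probabilities over $m \geq 2$ and $k \geq 2$ yields a convergent series, and a Borel--Cantelli-type heuristic then predicts only finitely many such "collisions", exactly as needed. Making this random-model reasoning rigorous, however, seems beyond current techniques, and is where I expect any genuine proof to stall.
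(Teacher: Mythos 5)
The statement is a conjecture that the paper itself does not prove unconditionally: it only establishes the unconditional lower bound $M_k(d)\geq L(d)$ and the convergence $M_k(d)\to L(d)$ under the added hypothesis that $\{M_k(d)\}_{k\geq 2}$ is bounded (Proposition~\ref{prop:limit}), together with a probability-1 heuristic for that boundedness (Theorem~\ref{thm:introheuristic}(d)). Your proposal matches this exactly — the same lower bound via $m=1$, the same pointwise elimination of each fixed $n>L(d)$ once $2^k>p(n)+d$, and the correct identification that the only missing ingredient is a uniform bound $B(d)$ (equivalently, boundedness of $\{M_k(d)\}_k$), which is precisely where the paper also stops and resorts to its probabilistic model.
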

\noindent Additionally, they observe that the bound $N_d$ appears to grow like $O(\log d)$. 

Again we take a more general approach, and in Proposition \ref{prop:limit} we prove a generalization of Conjecture \ref{conj:MOT3&4} in the cases when $M_{f,k}(d)$ is bounded as $k\to\infty$. Using this assumption we also give, in Theorem \ref{thm:ndlower}, an explicit lower bound for the smallest value of $k$ for which $\{M_{f,k}(d)\}_{k=1}^\infty$ stabilizes.  In particular, for the partition function $p(n)$, we prove the following.

\begin{theorem}\label{thm:Nd_intro}
    If $\{M_k(d)\}_{k=1}^\infty$ is bounded then there exists a smallest positive integer $N_d$ such that $M_k(d)=L(d)$ for all $k\geq N_d$. Furthermore, we have
    \[
    N_{d} \geq \lfloor \log_{2}(d) \rfloor + A \log \log d
    \]
    for any $0 < A < \frac{1}{\log 2}$ and for all sufficiently large $d$.
\end{theorem}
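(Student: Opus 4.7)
The existence of $N_d$ is an immediate consequence of Proposition \ref{prop:limit} applied to $f = p$, so only the lower bound requires work.

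Fix any $0 < A < 1/\log 2$ and set $X := \lfloor \log_2 d \rfloor + A \log \log d$. By minimality of $N_d$, it suffices to produce, for the single value $k := \lfloor X \rfloor$ and all sufficiently large $d$, one integer $n > L(d)$ with $\Delta_k(n) \leq d$: this forces $M_k(d) > L(d)$ and hence $N_d \geq k+1 \geq X$ (since $\lfloor X \rfloor + 1 \geq X$). The strategy is to choose $n$ so that $p(n)$ lies just above $2^k$. Let $n_k := \min\{n : p(n) \geq 2^k\}$, so that $0 \leq p(n_k) - 2^k \leq p(n_k) - p(n_k - 1)$; hence it is enough to bound the consecutive gap of $p$ near $2^k$ by $d$.

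The core estimate comes from the Hardy--Ramanujan asymptotic \eqref{eq:HRasymptotic}. Differentiating its logarithm yields $p(n+1)/p(n) = 1 + \pi/\sqrt{6n} + o(n^{-1/2})$, and substituting $p(n_k) \sim 2^k$ together with $n_k \sim 3(k\log 2)^2/(2\pi^2)$ gives
\[
    p(n_k) - p(n_k - 1) \sim \frac{2^k \pi^2}{3 k \log 2}.
\]
The required inequality $p(n_k) - p(n_k - 1) \leq d$ therefore reduces asymptotically to
\[
    k \log 2 \leq \log d + \log k + O(1).
\]
For $k = \lfloor X \rfloor$, the left side equals $\log d + A \log 2 \cdot \log \log d + O(1)$ while the right side equals $\log d + \log \log d + O(1)$, so the condition simplifies to $(A \log 2 - 1) \log \log d \leq O(1)$, which holds for all sufficiently large $d$ precisely because $A < 1/\log 2$. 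Finally, one verifies $n_k > L(d)$: since $A \log \log d \geq 2$ eventually, $k \geq \lfloor \log_2 d \rfloor + 2$, whence $2^k > 2d+1$ and so $p(n_k) > d+1$.

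The main obstacle is making the $o(n^{-1/2})$ term in the gap estimate explicit and uniform over the regime $n \sim (k \log 2)^2$ with $k \to \infty$ along the scale of interest. Quantitative refinements of \eqref{eq:HRasymptotic}, supplied either by Rademacher's exact convergent series or by Lehmer-type effective bounds on $p(n)$, provide error terms small enough to preserve the critical constant $1/\log 2$ in the ensuing inequality. Once such an effective version is in place, the argument above converts directly into the stated lower bound on $N_d$.
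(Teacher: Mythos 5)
Your argument is correct in outline and arrives at the right constant, but it selects the witness pair $(n,k)$ in the opposite order from the paper, so it is worth comparing the two. The paper proves the bound via the general Theorem \ref{thm:ndlower2terms}: it first defines $n_d$ as the last index beyond $L(d)$ up to which all consecutive gaps $p(j+1)-p(j)$ stay $\leq 2d$, deduces from the gap asymptotic that the failure of this condition at $n_d+1$ forces $p(n_d+1)$ to exceed $d\cdot (n_d+1)^{1-c}$ up to constants, and only then reads off the exponent $k_d=\lfloor \log_2 p(n_d+1)\rfloor$, locating $2^{k_d}$ inside the interval $[p(L(d)+1),\,p(n_d+1)]$ where every point is within $d$ of some $p(s)$. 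You instead fix the exponent $k=\lfloor X\rfloor$ first and take $n_k$ to be the first crossing of $2^k$, bounding $\Delta_k(n_k)$ by the single gap $p(n_k)-p(n_k-1)\sim 2^k\pi^2/(3k\log 2)$; the inequality $k\log 2\leq \log d+\log k+O(1)$ then delivers exactly the threshold $\log_2 d+\tfrac{1}{\log 2}\log\log d$. Your route is more direct and needs only a one-sided (upper) bound on a single gap rather than control of a whole run of gaps, and your verifications ($p(n_k-1)<2^k\leq p(n_k)$, $n_k>L(d)$ via $2^k>2d$) are all sound. The one substantive issue you flag yourself --- that the Hardy--Ramanujan asymptotic \eqref{eq:HRasymptotic} alone does not justify $p(n+1)/p(n)-1\sim \pi/\sqrt{6n}$, since the relative errors in $p(n)\sim A(n)$ could a priori swamp a quantity tending to $0$ --- is real, but the paper's own proof makes the identical move (deriving $g(n)/f(n)\sim \alpha c\, n^{c-1}$ from the bare asymptotic, citing \cite{AGHHPSS}); for $p(n)$ the gap asymptotic is a genuine known theorem (it is the asymptotic for the non-unitary partition function $r(n)$ in Table \ref{tab:asymptotics}), so your appeal to Rademacher/Lehmer-type effective bounds is the right way to close it and is no worse than what the paper does.
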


\subsection{A heuristic approach}
Proving an explicit upper bound for $\{M_k(d)\}_{d=1}^\infty$ remains elusive. More generally, proving precise values of $M_{f,k}(d)$ for generic $k$ and $d$, even for many simple functions such as $f(n) = \lfloor e^n \rfloor$,  is a very difficult problem, but heuristic methods can provide a more tractable approach. We can use a random variable to model the behavior of functions $f(n)$ similar to the partition function, then apply this heuristic to investigate which statements hold with probability 1. 

Such methods have been frequently used to model the prime numbers. Cram\'{e}r \cite{Cramer} originally considered a heuristic model of the prime numbers to give a conjectured growth rate for the maximal gap between consecutive prime numbers. Such models were later refined by Granville \cite{Granville} and more recently by Banks--Ford--Tao \cite{BanksFordTao}.

Heuristic models have also been used to study the statistics of zeros of $L$-functions (e.g. see the Katz-Sarnak philosophy \cite{KatzSarnak_book, KatzSarnak}), the asymptotic behaviour of the M\"{o}bius function $\mu(n)$ (e.g. see \cite{Sarnak}) as well as give predictions for class groups of number fields (e.g. see \cite{CohenLenstra} or \cite[p.~295]{Cohen_book}).\footnote{For a further explanation for such probabilistic models, we refer the reader to the excellent blog post of Tao \cite{Tao_blog}.}

We take care to emphasize that the results of this heuristic approach by no means constitute rigourous proofs regarding how close $f(n)$ is to perfect powers, but they can at least tell us \emph{morally speaking} which statements and conjectures we expect to be true, assuming that $f(n)$ behaves in a ``sufficiently random'' way, which we shall make precise below. 

In Section~\ref{sec:model}, we develop a general heuristic model for any sufficiently fast-growing function $f(n)$. Here we briefly describe that model for $p(n)$. We set $A(n) = \frac{1}{4 \sqrt{3} n} \exp(\pi \sqrt{\frac{2 n}{3}} )  $ , the main term of the Hardy-Ramanujan asymptotic for $p(n)$. Let $\{\eps_n\}_{n=1}^\infty$ be a sequence of positive real numbers such that $\eps_n \to 0$ as $n \to \infty$.  To control the rate at which $\eps_n$ converges to 0, we also assume that 
\begin{equation}\label{eq:intro_eps_ncondition}
\eps_n \gg \frac{\log A(n)}{\sqrt{A(n)}} \asymp n \exp{ \Big( - \frac{\pi}{2} \sqrt{ \frac{2n}{3}} \Big)} 
\end{equation}
as $n \to \infty$, where $\asymp$ denotes asymptotically equivalent up to a constant. Next we let $\{\mathbf{p}_n\}_{n=1}^\infty$ be a sequence of discrete random variables defined so that for each $n$, $\mathbf{p}_n$ takes a uniform distribution among the set 
\begin{equation}\label{eq:intro_S_ndef}
\mathcal{S}_n = \{x\in \N \mid A(n)(1 - \eps_n) \leq x \leq A(n) (1 + \eps_n) \}. 
\end{equation}

\noindent For any integer $m \in \Z$, we define the probability mass function
\begin{equation*}
    \mathbb{P}( \mathbf{p}_n = m) = \begin{cases}
        \frac{1 }{ \card{\mathcal{S}_n}} &\text{if } m \in \mathcal{S}_n, \\
        0 & \text{otherwise. }
    \end{cases}
\end{equation*}

\noindent We can now define the random variables
$$\mathbf{\Delta}_k(n) := \min \big\{  | \mathbf{p}_n- m^k| : m \in \Z  \big\} $$
and 
$$ \mathbf{M}_k(d) := \max \{ n :  \mathbf{\Delta}_k(n) \leq d \} $$
analogously to $\Delta_k(n)$ and $M_k(n)$.

We are then able to prove the following theorem for $p(n)$. 

\begin{theorem} \label{thm:introheuristic}
The following statements are each true with probability 1.
\begin{enumerate}[(a)]
\item For any $d\geq 0$ and $k\geq 2$, the random variable $\mathbf{M}_k(d)$ is finite.
\medskip
\item The sequence of random variables $\{\mathbf{p}_n\}_{n=1}^\infty$ will take only finitely many perfect powers.
\medskip
\item Let $k \geq 2$ and assume moreover than $\eps_n \gg A(n)^{-1/k}$ as $n \to \infty$.  For any $\eps > 0$ and sufficiently large $d$, the random variable $\mathbf{M}_k(d)$ satisfies the upper bound
   \begin{equation*}
        \mathbf{M}_k(d) < (4 + \eps) \cdot \frac{3}{2 \pi^2} \left(\frac{k}{k - 1} \right)^2 (\log d)^2.
    \end{equation*}
In particular, $\mathbf{M}_k(d) \asymp (\log d)^2$ as $d \to \infty$.
\medskip
\item For any fixed $d \geq 0$, the sequence of random variables $\{ \mathbf{M}_k(d) \}_{k=2}^{\infty}$ is bounded.
\end{enumerate}
\end{theorem}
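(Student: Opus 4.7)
The plan is to apply the first Borel--Cantelli lemma throughout, after establishing a single master estimate for the probability $\mathbb{P}(\mathbf{\Delta}_k(n) \leq d)$. The set $\mathcal{S}_n$ is an integer interval of length $2\varepsilon_n A(n) + O(1)$, and the number of $k$th powers lying in it is $\frac{2 \varepsilon_n A(n)^{1/k}}{k} + O(1)$. The hypothesis \eqref{eq:intro_eps_ncondition} forces $|\mathcal{S}_n| \gg \sqrt{A(n)}\log A(n)$, so I would establish
\[
\mathbb{P}(\mathbf{\Delta}_k(n) \leq d) \ll \frac{d}{k\, A(n)^{(k-1)/k}} + \frac{d}{\varepsilon_n A(n)}.
\]
Because $A(n)$ grows super-polynomially in $n$ and the $\mathbf{p}_n$ are independent, this estimate is tailor-made for Borel--Cantelli.

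For (a), fixing $k$ and $d$ makes $\sum_n \mathbb{P}(\mathbf{\Delta}_k(n) \leq d)$ convergent, and Borel--Cantelli gives $\mathbf{M}_k(d) < \infty$ almost surely. For (b), I would bound the total number of perfect powers in $\mathcal{S}_n$ by $O(\varepsilon_n \sqrt{A(n)}) + O(\log A(n))$, the first term coming from squares in $\mathcal{S}_n$ and the second from the fact that for each exponent $k \leq \log_2 A(n)$ there is at most one relevant $k$th power; this yields $\mathbb{P}(\mathbf{p}_n \text{ is a perfect power}) \ll 1/\sqrt{A(n)}$, still summable. For (d), the same count weighted by $2d+1$ gives $\mathbb{P}(\mathbf{p}_n \text{ is within } d \text{ of a perfect power}) \ll d/\sqrt{A(n)}$, so Borel--Cantelli produces an almost surely finite random $N$ beyond which $\mathbf{p}_n$ is far from every perfect power, bounding $\mathbf{M}_k(d) \leq N$ uniformly in $k \geq 2$.

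Part (c) is the delicate piece. Under the stronger assumption $\varepsilon_n \gg A(n)^{-1/k}$ the first term in the master estimate dominates, so I would set $N(d) := \lceil (4+\varepsilon)\cdot \frac{3}{2\pi^2}(k/(k-1))^2 (\log d)^2\rceil$ and consider $E_d := \{\mathbf{M}_k(d) \geq N(d)\}$. Using $\log A(n) = \pi\sqrt{2n/3} + O(\log n)$ together with the identity $\pi\sqrt{2/3} \cdot \sqrt{3/(2\pi^2)} = 1$, the exponent $\frac{k-1}{k}\pi\sqrt{2 N(d)/3}$ evaluates to $\sqrt{4+\varepsilon}\log d$, so that $A(N(d))^{(k-1)/k} = d^{\sqrt{4+\varepsilon}}(\log d)^{O(1)}$. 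Summing the rapidly decaying tail gives
\[
\mathbb{P}(E_d) \leq \sum_{n \geq N(d)} \mathbb{P}(\mathbf{\Delta}_k(n) \leq d) \ll d^{1-\sqrt{4+\varepsilon}}(\log d)^{O(1)},
\]
which is summable in $d$ since $\sqrt{4+\varepsilon}>2$, and a second application of Borel--Cantelli delivers the claimed upper bound almost surely. The matching lower bound $\mathbf{M}_k(d) \gg (\log d)^2$ needed for $\mathbf{M}_k(d) \asymp (\log d)^2$ is deterministic: it follows by rerunning the proof of Theorem~\ref{thm:Mkd_intro} with $\mathbf{p}_n$ in place of $p(n)$, since $\mathbf{p}_n \leq A(n)(1+\varepsilon_n)$ satisfies the same growth estimate used there.

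The main obstacle is the bookkeeping in part (c): carefully threading the $O(\log n)$ correction to $\log A(n)$ through the inversion $n \leftrightarrow \log d$ is what lets the constant come out as $(4+\varepsilon)$ rather than something weaker, and any loose estimate along the way breaks the summability of $\mathbb{P}(E_d)$.
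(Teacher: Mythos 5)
Your proposal is correct and follows essentially the same route as the paper: the master estimate is the paper's Lemma~\ref{lem:prob_delta}, parts (a), (b) and (d) are exactly the Borel--Cantelli arguments of Lemmas~\ref{lem:Mkdprob} and \ref{lem:Mdprob} (the paper's dedicated proof of (d) sums $\mathbb{P}(E_k)$ over $k$ via Fubini, but your ``uniform finite $N$'' version is the content of Lemma~\ref{lem:Mdprob} and gives (d) just as directly), and your treatment of (c) --- the choice of $N(d)$, the identity $\pi\sqrt{2/3}\cdot\sqrt{3/(2\pi^2)}=1$ yielding $A(N(d))^{(k-1)/k}=d^{\sqrt{4+\eps}}(\log d)^{O(1)}$, the second Borel--Cantelli over $d$, and the deterministic lower bound from Proposition~\ref{asy_Mk} --- matches Theorem~\ref{thm:mainasymptotic} and its corollary. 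The one imprecision is your justification in (b) that each exponent $3\le k\le\log_2 A(n)$ contributes at most one $k$th power to $\mathcal{S}_n$ (false when $\eps_n$ decays slowly, e.g.\ $\eps_n A(n)^{1/3}\to\infty$), but the claimed total $O(\eps_n\sqrt{A(n)})+O(\log A(n))$ still holds since the $k\ge 3$ contribution is $O(\eps_n A(n)^{1/3}\log\log A(n))+O(\log A(n))$, exactly as in the paper's Lemma~\ref{lem:prob_delta2}.
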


More generally, we have analogous results to Theorem \ref{thm:introheuristic} for any function $f(n)$ that meets the conditions of our model.

\begin{remark}\label{rem:thm1.3} Note that part (a) of Theorem~\ref{thm:introheuristic} above gives a heuristic proof of Conjecture~\ref{conj:MOT1}, part (b) is in the direction of Sun's Conjecture, part (c) gives a heuristic proof of Conjecture~\ref{conj:Mkdasymp}, and part (d) gives a heuristic proof of Conjecture~\ref{conj:MOT3&4}.
\end{remark}

This paper is structured as follows.  In Section~\ref{sec:MOTconjs}, we introduce the notation $\Delta_{f,k}(n)$ and $M_{f,k}(d)$ for an arbitrary function $f : \N \to \N$, and study their asymptotic behavior.  In Section~\ref{sec:DMbounds}, we obtain bounds for $\Delta_{f,k}(n)$ and $M_{f,k}(d)$ and prove Theorem \ref{thm:Mkd_intro}.  In Section~\ref{sec:Nbounds} we obtain bounds for $N_{f,d}$ and prove Theorem~\ref{thm:Nd_intro}.  In Section~\ref{sec:equidistribution} we provide strong computational evidence that the sequence $\{ \{ \sqrt[k]{p(n)}\} \}_{n=1}^{\infty}$ of fractional parts of $\sqrt[k]{p(n)}$ is equidistributed modulo 1, which motivates our probabilistic model and heuristic results.

In Section~\ref{sec:model}, we introduce our probabilistic model and define a sequence of random variables $\{ \mathbf{f}_n \}_{n=1}^{\infty}$, used to model any sufficiently fast-growing function $f : \N \to \N$. We define the random variables $\mathbf{\Delta}_{\mathbf{f},k}(n)$ and $\mathbf{M}_{\mathbf{f},k}(d)$ analogous to $\Delta_{f,k}(n)$ and $M_{f,k}(d)$, and prove various bounds on $\mathbb{P}(\mathbf{\Delta}_{\mathbf{f},k}(n) \leq d )$.  In Section~\ref{sec:heuristics}, we prove a heuristic upper bound for $\mathbf{M}_{\mathbf{f},k}(d)$, and give a proof of Theorem~\ref{thm:introheuristic}.  In Section \ref{sec:computations}, we extend the computations of Merca--Ono--Tsai and provide further computational evidence towards Conjecture~\ref{conj:MOT1} both for $p(n)$ and various other partition functions.  Finally, in Section~\ref{sec:conjectures}, we provide further conjectures for other partition functions and study related functions $\widetilde{\Delta}_{f,a}(n)$ and $\widetilde{M}_{f,a}(d)$.

\section*{Acknowledgments}
This project was started at the \textit{Rethinking Number Theory 6} AIM workshop held online during June 2025. We owe a big thank you to the organizers of that workshop: Jen Berg, Heidi Goodson, and Allechar Serrano L\'{o}pez, for bringing all of us together to work on this project.

\section{Asymptotic behavior of $\Delta_{f,k}(n)$ and $M_{f,k}(d)$} \label{sec:MOTconjs}

Throughout this paper we let $f: \N \to \N$ denote a function such that $f(n)\rightarrow \infty$ as $n\rightarrow \infty$.  For any integers $k \geq 2$ and $n\geq 1$, let $\Delta_{f,k}(n)$ be the distance between $f(n)$ and its nearest $k$th power, namely
\begin{equation}\label{eq:deltadef}
    \Delta_{f,k}(n) := \min \left\{ | f(n) - m^k | : m \in \Z \right\} .
\end{equation}
We observe that $\Delta_{f,k}(n)$ can be computed as
\begin{equation} \label{eq:deltamin}
    \Delta_{f,k}(n) = \min \left( f(n) - \left\lfloor \sqrt[k]{f(n)} \right\rfloor^k \:,\: \left\lceil \sqrt[k]{f(n)} \right\rceil^k - f(n) \right),
\end{equation}
i.e., to compute $\Delta_{f,k}(n)$, it suffices to take the minimum of (i) the distance between $f(n)$ and the largest $k$th power less than or equal to $f(n)$, and (ii) the distance between $f(n)$ and the smallest $k$th power greater than or equal to $f(n)$.

\begin{remark}
If $k = 2$, then also $\Delta_{f,2}(n) = \left| f(n) - \left\lfloor \sqrt{f(n)} \, \right\rceil^2 \right|$, where $\lfloor m \rceil$ represents the closest integer function applied to $m$.  However this simplification does not generalize to $k > 2$. For example, if $f(n) = 4$ for some fixed $n$, then $\Delta_{f,3}(n) = 3$, but $\left| f(n) - \left\lfloor \sqrt[3]{f(n)} \right\rceil^3 \right| = 4$. 
\end{remark}

Since 1 is a $k$th power for all $k>1$, we have $\Delta_{f,k}(n)\leq |f(n)-1^k|=f(n)-1$ for all $k,n$. We show in the following proposition that $\{\Delta_{f,k}(n)\}_{k=1}^\infty$ in fact always stabilizes at that value.

\begin{prop} The sequence $\{\Delta_{f,k}(n)\}_{k=1}^\infty$ converges to $f(n)-1$.
\end{prop}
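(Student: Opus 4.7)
My plan is to show that the sequence $\{\Delta_{f,k}(n)\}_{k=1}^\infty$ is eventually constant and equal to $f(n)-1$, which is the strongest form of convergence. Fix $n$, and write $N := f(n)$, which is a fixed positive integer (I may assume $N \geq 2$ since otherwise everything is trivially $0$). The upper bound $\Delta_{f,k}(n) \leq N - 1$ is immediate by taking $m = 1$ in the definition \eqref{eq:deltadef}, as noted just before the proposition.

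For the matching lower bound, the plan is to rule out every other $m \in \Z$ once $k$ is large enough. I split the cases: if $m = 0$, then $|N - m^k| = N > N - 1$; if $|m| = 1$, then $|N - m^k| \in \{N-1, N+1\}$, so the best such value is $N-1$; and if $|m| \geq 2$, then $|m^k| \geq 2^k$, which forces
\begin{equation*}
    |N - m^k| \geq 2^k - N
\end{equation*}
(the case $m < 0$ with $k$ odd only makes this larger). Choosing $k$ large enough that $2^k \geq 2N$ then gives $|N - m^k| \geq N > N - 1$ for all $|m| \geq 2$. Hence for every such $k$, the minimum in \eqref{eq:deltadef} is attained at $m = \pm 1$ and equals $N - 1$.

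Combining the bounds, for all $k \geq \lceil \log_2(2N) \rceil$ we have $\Delta_{f,k}(n) = f(n) - 1$ exactly, so the sequence is eventually constant and thus converges to $f(n) - 1$. There is no real obstacle here; the only mild subtlety is bookkeeping the sign cases for $m$ (in particular $m = -1$ with $k$ even or odd), which is handled uniformly by the inequality $|m^k| \geq |m|^k \geq 2^k$ for $|m| \geq 2$. This argument also makes explicit the threshold $k \gtrsim \log_2 f(n)$ past which the sequence stabilizes, a quantitative refinement that may be worth recording even though the statement asks only for convergence.
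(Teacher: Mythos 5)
Your proof is correct and follows essentially the same route as the paper: both arguments show the sequence is eventually constant by observing that once $2^k \geq 2f(n)$, every $k$th power other than $\pm 1$ is farther from $f(n)$ than $1$ is, so the minimum is $f(n)-1$. Your explicit handling of $m=0$ and negative $m$ is slightly more careful bookkeeping than the paper's, but it is the same idea with the same threshold $k \gtrsim \log_2 f(n)$.
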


\begin{proof} Fix $n\in\N$. First note that $\Delta_{f,k}(n)$ is defined for all $k\geq 1$ by well-ordering. Find $K\in\N$ such that $f(n)\leq 2^{K-1}$. Then for all $k\geq K$, $2f(n)<2^K+1\leq 2^k+1$, so $f(n)-1<2^k-f(n)$, and hence 1 is the closest $k$th power to $f(n)$. Thus $\Delta_k(n)=f(n)-1$ for all $k\geq K$.
\end{proof}

Now fixing $k>1$, we define 
\begin{equation}\label{eq:Mdef}
    M_{f,k}(d) := \max \{ n \in \N : \Delta_{f,k}(n) \leq d \}
\end{equation}
if such an $n$ exists, and $\infty$ otherwise.  We note that Conjecture \ref{conj:MOT1} is equivalent to the statement that $M_{f,k}(d)$ is finite for all $k>1$ and $d\geq 0$.  

In order for $\{\Delta_{f,k}(n)\}_{n=1}^\infty$ to tend to $\infty$, it is necessary that $M_{f,k}(d)$ is finite for all $d\geq 0$.  Conversely, if $M_{f,k}(d)$ is finite for all $d\geq 0$, then $\{M_{f,k}(d)\}_{d=0}^\infty \rightarrow \infty$ and furthermore is monotonically increasing, since if $d\leq e$, then any $n$ satisfying $\Delta_k(n)\leq d$ will also satisfy $\Delta_k(n)\leq e$.

If we instead fix $d$ and let $k$ grow, we determine that if the sequence $\{ M_{f,k}(d) \}_{k=2}^{\infty}$ is bounded, then the limiting behavior of $M_{f,k}(d)$ matches that in Conjecture \ref{conj:MOT3&4}.  Generalizing \eqref{eq:Lp}, define
\begin{equation}\label{eq:Lf}
L_f(d) = \max \{n: f(n) \leq d + 1\}
\end{equation}
if such an $n$ exists, and $\infty$ otherwise.  From the definition we observe that $L_f(d)$ is monotonically increasing in $d$.

\begin{prop} \label{prop:limit}  
    If the sequence $\{ M_{f,k}(d) \}_{k=2}^{\infty}$ is bounded, then  
    \begin{equation*}
        \lim_{k\to \infty} M_{f,k}(d) = L_f(d).
    \end{equation*}
\end{prop}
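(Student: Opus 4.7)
The plan is to prove that $M_{f,k}(d) = L_f(d)$ for all sufficiently large $k$, by establishing matching upper and lower bounds. The lower bound $M_{f,k}(d) \geq L_f(d)$ holds for every $k \geq 2$: at $n = L_f(d)$ we have $f(n) \leq d+1$ by definition of $L_f$, so taking $m = 1$ in~\eqref{eq:deltadef} yields $\Delta_{f,k}(n) \leq f(n) - 1 \leq d$, placing $L_f(d)$ in the set whose maximum defines $M_{f,k}(d)$. Note that $L_f(d)$ is automatically a finite positive integer under our hypothesis: the set $\{n : f(n) \leq d+1\}$ is finite because $f(n) \to \infty$, and nonempty because otherwise the upper bound argument below would force $M_{f,k}(d) = \infty$ for large $k$, contradicting boundedness.

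For the matching upper bound, the key intuition is that when $k$ is large the $k$th powers $\ldots, -2^k, -1, 0, 1, 2^k, 3^k, \ldots$ are extremely sparse, so for $f(n)$ to lie within $d$ of one of them essentially forces $f(n) \leq d+1$. The boundedness hypothesis makes this uniform in $n$. I would set $B := \sup_{k \geq 2} M_{f,k}(d) < \infty$ and restrict attention to the finite set $T := \{n \in \N : n \leq B \text{ and } f(n) \geq d+2\}$. For each $n \in T$, choose $K_n$ with $2^{K_n} > f(n) + d$, and then verify for every $k \geq K_n$ and every $m \in \Z$ that $|f(n) - m^k| > d$ via a case split on the value of $m^k$: if $m^k \leq 0$ then $|f(n) - m^k| \geq f(n) > d$; if $m^k = 1$ then $|f(n) - 1| \geq d+1 > d$; and otherwise $m^k \geq 2^k > f(n) + d$, giving $|f(n) - m^k| > d$. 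Setting $K := \max_{n \in T} K_n$, which is finite because $T$ is, no element of $T$ can satisfy $\Delta_{f,k}(n) \leq d$ for $k \geq K$. Since the maximum defining $M_{f,k}(d)$ must be taken among $n \leq B$, it is forced to lie in $\{n \leq B : f(n) \leq d+1\}$, giving $M_{f,k}(d) \leq L_f(d)$.

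Combining the two bounds gives $M_{f,k}(d) = L_f(d)$ for all $k \geq K$, and hence $\lim_{k\to\infty} M_{f,k}(d) = L_f(d)$ as claimed. There is no deep obstacle here: the main technical point is using the boundedness hypothesis to reduce to the finite set $T$ where a uniform threshold $K$ can be chosen. The only bookkeeping lies in handling the minimum in~\eqref{eq:deltadef} over all $m \in \Z$, which requires a careful case split to account for $m = 0$, negative $m$, and the parity of $k$ (since $(-m)^k = m^k$ when $k$ is even).
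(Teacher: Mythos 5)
Your proof is correct and rests on the same key observation as the paper's: once $2^k$ exceeds $f(n)+d$, the only $k$th powers within $d$ of $f(n)$ would have to be $1$ or nonpositive, forcing $f(n)\leq d+1$. The paper packages this as a proof by contradiction applied to the single value $A=\limsup_k M_{f,k}(d)$, whereas you apply it uniformly over the finite set $T$ to get an explicit threshold $K$; this is a purely organizational difference, and your version is if anything slightly more direct.
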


\begin{proof}
    Since $f(n)$ grows with $n$, $L_f(d)$ must be finite.  We first show that $M_{f,k}(d) \geq L_f(d)$ for all $k$. Note that we have $f(L_f(d)) - 1 \leq d$. Now for any $k \geq 2$, as $1$ is clearly a $k$th power, this implies $\Delta_{f,k}(L_f(d)) \leq f(L_f(d)) - 1 \leq d$. and therefore by definition $M_{f,k}(d) \geq L_f(d)$ for all $k \geq 2$.

    Now let $A$ be the smallest integer such that $M_{f,k}(d) \leq A$ for all sufficiently large $k$.  Note that $A$ exists as $\{M_{f,k}(d)\}_{k=1}^{\infty}$ bounded (by assumption), and $A \geq L_f(d)$ (as $M_{f,k}(d) \geq L_f(d)$ for all $k$).  We shall prove that $A = L_f(d)$.

    Let us assume for contradiction that $A > L_f(d)$.  By definition of $A$, there are infinitely many integers $k$ such that $M_{f,k}(d) = A$.   In particular, we can choose some $K$ sufficiently large such that  $M_{f,K}(d) = A$ and $2^K > f(A) + d$. We now claim that $\Delta_{f,K}(A) > d$.   As $A > L_f(d)$, this implies $f(A) > 1 + d$, and as $2^K > f(A) + d$, this implies that $| f(A) - m^K | > d$ for all $m \in \Z$. Therefore
     \begin{equation*}
        \Delta_{f,K}(A) := \min \{ | f(A) - m^K | : m \in \Z \} > d ,
    \end{equation*}
    but, $M_{f,K}(d) = A$ also implies that $\Delta_{f,K}(A) \leq d$, which is a contradiction!

    Thus $A = L_f(d)$, which implies $M_{f,k}(d) = L_f(d)$ for all sufficiently large $k$. Thus $\lim_{k \to \infty} M_{f,k}(d)$ exists and equals $L_f(d) = \max \{ n \in \N : f(n) \leq d + 1 \}$.
\end{proof}

Progress toward bounding $\{M_{f,k}(d)\}_{k\geq 2}$ in general is given by the following result.

\begin{prop} \label{prop:multiple}
Let $f:\N\to\N$ and fix $d\geq 0$ and $k\in\N$. Then $M_{f,k}(d)\geq M_{f,jk}(d)$ for all $j\in\N$.
\end{prop}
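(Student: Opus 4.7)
The plan is to exploit the simple observation that every $jk$-th power is automatically a $k$-th power. First, I would observe that if $m \in \Z$, then $m^{jk} = (m^j)^k$, so the set $\{m^{jk} : m \in \Z\}$ of perfect $jk$-th powers is contained in the set $\{m^k : m \in \Z\}$ of perfect $k$-th powers.

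Next, I would use this containment to compare the $\min$ defining $\Delta_{f,k}(n)$ and $\Delta_{f,jk}(n)$ in \eqref{eq:deltadef}: taking a minimum over a larger set can only decrease (or preserve) the value, so
\begin{equation*}
\Delta_{f,k}(n) = \min \{|f(n) - m^k| : m \in \Z\} \leq \min \{|f(n) - m^{jk}| : m \in \Z\} = \Delta_{f,jk}(n)
\end{equation*}
for every $n \in \N$.

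Finally, I would conclude by passing to the sets used to define $M_{f,k}(d)$ and $M_{f,jk}(d)$. The pointwise inequality $\Delta_{f,k}(n) \leq \Delta_{f,jk}(n)$ implies the inclusion
\begin{equation*}
\{n \in \N : \Delta_{f,jk}(n) \leq d\} \subseteq \{n \in \N : \Delta_{f,k}(n) \leq d\},
\end{equation*}
and taking the maximum of each side (using the convention that the maximum of the empty set is $-\infty$ and of an unbounded set is $\infty$) yields $M_{f,jk}(d) \leq M_{f,k}(d)$, as claimed.

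There is no real obstacle here; the argument is essentially a one-line consequence of the set-theoretic inclusion of $jk$-th powers inside $k$-th powers. The only minor care to take is handling the possibility that $M_{f,jk}(d)$ is $\infty$ or undefined, which is immediate from the definition in \eqref{eq:Mdef}.
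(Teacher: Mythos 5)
Your proposal is correct and uses the same key observation as the paper, namely that $m^{jk} = (m^j)^k$ so every $jk$-th power is a $k$-th power; the paper simply applies this to the specific witness $m$ achieving $\Delta_{f,jk}(n) \leq d$ at $n = M_{f,jk}(d)$, whereas you phrase it as a pointwise inequality $\Delta_{f,k}(n) \leq \Delta_{f,jk}(n)$ followed by a set inclusion. Both routes are essentially identical in substance.
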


\begin{proof} Let $j\in\N$ and suppose that $M_{f,jk}(d)=n$. Then for some $m\in\Z$ we have $|f(n)-m^{jk}|\leq d$. So, also, $|f(n)-(m^j)^k|\leq d$, hence $M_{f,k}(d)\geq n=M_{f,jk}(d)$.
\end{proof}

By Proposition \ref{prop:multiple}, it is sufficient to bound $\{M_{f,p_j}(d)\}_{j\geq 1}$ for all primes $p_j\in\N$ in order to show that  $\{M_{f,k}(d)\}_{k\geq 2}$ is bounded.  

\begin{remark} For the partition function $p(n)$, our computations suggest that $M_2(d)\geq M_p(d)$ for all primes $p$. If this could be proved, then Conjecture \ref{conj:sun} would follow by proving that $M_2(0)=1$.  One possible approach to proving that $M_2(d)$ is an upper bound would be to obtain sufficiently precise asymptotics for the growth of $M_{k}(d)$ as $d \rightarrow \infty$ for each fixed $k$. \end{remark}

\begin{remark}  We note there are sequences where, for some fixed $d \geq 0$,  $M_{f,k}(d)$ is finite for all $k \geq 2$, but the sequence $\{ M_{f,k}(d) \}_{k=2}^{\infty}$ is unbounded.  For example, consider the sequence $f(n) = 2^{p_n}$ where $p_n$ is the $n$-th prime.  Then, for any $n \geq 1$, we have $M_{f,p_n}(0) = n$.  Furthermore, by Mihăilescu's proof of the Catalan conjecture \cite{Mihuailescu}, we know that the only two consecutive perfect powers are 8 and 9, and thus we have that $M_{f,k}(1)$ is finite for all $k \geq 2$.

More generally, Pillai's conjecture would imply that, given any $d \geq 0$, there are only finitely many pairs of perfect powers whose difference is at most $d$ (e.g. see \cite[Problem~D9]{Guy_book}).  This would therefore imply that $M_{f,k}(d)$ is finite for all $d$ and for all $k$.\end{remark}

\section{Bounds for $\Delta_{f,k}(n)$ and $M_{f,k}(d)$} \label{sec:DMbounds}

In this section we prove upper bounds for $\Delta_{f,k}(n)$ and lower bounds for $M_{f,k}(d)$ for fixed $k$ as $d\to\infty$. 

\begin{lemma} \label{lem:Delta_bound}
    Fix $k \geq 2$. For  $n\geq 1$, we have the upper bound
    \begin{equation*}
        \Delta_{f,k}(n) \leq \frac{1}{2} \sum_{i=1}^k  \binom{k}{i} f(n)^{(k-i)/k}.
    \end{equation*}
    Then for $d\geq 1$, we have the lower bound
     \begin{equation*}
        M_{f,k}(d) \geq \max \left\{  n \in \N \;\middle|\; \frac{1}{2} \sum_{i=1}^k  \binom{k}{i} f(n)^{(k-i)/k} \leq  d \right\} .
    \end{equation*}    
\end{lemma}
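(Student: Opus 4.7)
The plan is to reduce everything to estimating the gap between consecutive $k$th powers around $f(n)$, and then to derive the $M_{f,k}(d)$ bound directly from the $\Delta_{f,k}(n)$ bound.

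First, set $m := \lfloor \sqrt[k]{f(n)} \rfloor$, so that $m^k \leq f(n) < (m+1)^k$. By the formula \eqref{eq:deltamin} for $\Delta_{f,k}(n)$, we have
\begin{equation*}
\Delta_{f,k}(n) = \min\!\bigl(f(n) - m^k,\; (m+1)^k - f(n)\bigr) \leq \tfrac{1}{2}\bigl((f(n) - m^k) + ((m+1)^k - f(n))\bigr) = \tfrac{1}{2}\bigl((m+1)^k - m^k\bigr),
\end{equation*}
since the minimum of two nonnegative real numbers is at most their arithmetic mean. This is the key step, as it replaces the (arithmetically awkward) quantity $\Delta_{f,k}(n)$ by half the length of the gap between the two nearest $k$th powers bracketing $f(n)$.

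Next, expand the gap using the binomial theorem:
\begin{equation*}
(m+1)^k - m^k \;=\; \sum_{i=1}^{k} \binom{k}{i} m^{k-i}.
\end{equation*}
Since $m \leq \sqrt[k]{f(n)}$, we have $m^{k-i} \leq f(n)^{(k-i)/k}$ for every $1 \leq i \leq k$ (note $k - i \geq 0$). Substituting this into the previous inequality yields
\begin{equation*}
\Delta_{f,k}(n) \;\leq\; \tfrac{1}{2} \sum_{i=1}^{k} \binom{k}{i} f(n)^{(k-i)/k},
\end{equation*}
which is the claimed upper bound.

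For the lower bound on $M_{f,k}(d)$, observe that the upper bound just proved implies the following implication: if $n \in \N$ satisfies $\tfrac{1}{2} \sum_{i=1}^{k} \binom{k}{i} f(n)^{(k-i)/k} \leq d$, then $\Delta_{f,k}(n) \leq d$, and hence $n$ belongs to the set $\{n \in \N : \Delta_{f,k}(n) \leq d\}$ whose maximum defines $M_{f,k}(d)$. Taking the maximum over all such $n$ gives
\begin{equation*}
M_{f,k}(d) \;\geq\; \max\!\left\{\, n \in \N \;\middle|\; \tfrac{1}{2} \sum_{i=1}^{k} \binom{k}{i} f(n)^{(k-i)/k} \leq d \,\right\},
\end{equation*}
as required. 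There is no real obstacle here; the only subtle point is recognizing in the first step that one should bound $\min$ by average rather than by either endpoint individually, which is what produces the factor of $\tfrac{1}{2}$ that will later be crucial in obtaining the sharp constant in Theorem~\ref{thm:Mkd_intro}.
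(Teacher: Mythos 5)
Your proof is correct and follows essentially the same route as the paper: bound $\Delta_{f,k}(n)$ by half the gap between the two bracketing $k$th powers, expand the gap by the binomial theorem, and deduce the $M_{f,k}(d)$ bound directly from the definition. The only cosmetic difference is that you bound each term $m^{k-i} \leq f(n)^{(k-i)/k}$ individually, whereas the paper invokes the monotonicity of $x \mapsto (x+1)^k - x^k$ before expanding; these are the same estimate.
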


\begin{proof}
    Note that by (\ref{eq:deltamin}), $\Delta_{f,k}(n)$ can be expressed as the distance from $f(n)$ to either $\floor{ \sqrt[k]{f(n)} }^k$ or $\ceil{ \sqrt[k]{f(n)} }^k$, whichever is closer to $f(n)$.  Therefore, by taking half of the distance between $\floor{ \sqrt[k]{f(n)} }^k$ and $\ceil{ \sqrt[k]{f(n)} }^k$, this gives us the following upper bound for $\Delta_{f,k}(n)$:
    \begin{align*}
        \Delta_{f,k}(n) \leq \frac{1}{2} \left( \ceil{ \sqrt[k]{f(n)} }^k -  \floor{ \sqrt[k]{f(n)} }^k \right) \leq \frac{1}{2} \left( \left( \floor{ \sqrt[k]{f(n)} } + 1 \right)^k -  \floor{ \sqrt[k]{f(n)} }^k \right) .
    \end{align*}
    Now as $(x+1)^k - x^k$ is an increasing sequence in $x$, by an application of the binomial theorem, we have
    \begin{align*}
        \Delta_{f,k}(n) \leq \frac{1}{2} \left( \left(  \sqrt[k]{f(n)}  + 1 \right)^k -   \sqrt[k]{f(n)}^k \right) = \frac{1}{2} \sum_{i=1}^k  \binom{k}{i} f(n)^{(k-i)/k}.
    \end{align*}
    Then by the definition of $M_{f,k}(d)$, the given lower bound follows.
\end{proof}

For sufficiently large $n$, we can simplify the bound for $\Delta_{f,k}(n)$ in Lemma \ref{lem:Delta_bound} as follows. 

\begin{prop}\label{prop:Delta_betterbound}
    Let $k \geq 2$ and let $\eps > 0$.  Then for all sufficiently large $n$ we have
    \begin{equation*}
        \Delta_{f,k}(n) \leq (1 + \eps) \cdot \frac{k}{2} f(n)^{(k-1)/k} .
    \end{equation*}
\end{prop}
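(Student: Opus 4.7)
The plan is to start from the inequality established in Lemma~\ref{lem:Delta_bound}, namely
\[
\Delta_{f,k}(n) \leq \frac{1}{2}\sum_{i=1}^{k}\binom{k}{i} f(n)^{(k-i)/k},
\]
and to isolate the $i=1$ term, which equals exactly $\frac{k}{2} f(n)^{(k-1)/k}$ and is the desired leading order. The remaining work is then to show that the tail
\[
R(n) := \frac{1}{2}\sum_{i=2}^{k}\binom{k}{i} f(n)^{(k-i)/k}
\]
is negligible compared to the leading term as $n \to \infty$.

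To do this, I would factor out $f(n)^{(k-1)/k}$ and compare each term of the tail to the leading term. Concretely, for every $i \in \{2, \ldots, k\}$,
\[
\frac{f(n)^{(k-i)/k}}{f(n)^{(k-1)/k}} = f(n)^{(1-i)/k},
\]
and since $i-1 \geq 1$ and $f(n) \to \infty$ by assumption, each such ratio tends to $0$. Because the sum is finite (only $k-1$ terms, each with a fixed binomial coefficient $\binom{k}{i}$ independent of $n$), we obtain
\[
\frac{R(n)}{\frac{k}{2}f(n)^{(k-1)/k}} = \frac{1}{k}\sum_{i=2}^{k}\binom{k}{i} f(n)^{(1-i)/k} \longrightarrow 0 \quad \text{as } n \to \infty.
\]

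Given $\eps > 0$, it follows that for all sufficiently large $n$ this ratio is at most $\eps$, and hence
\[
\Delta_{f,k}(n) \leq \frac{k}{2}f(n)^{(k-1)/k} + R(n) \leq (1+\eps)\cdot\frac{k}{2}f(n)^{(k-1)/k},
\]
as desired. There is no real obstacle here: the proof is a routine asymptotic comparison of a leading term with lower-order terms, and the only mild subtlety is to note that the condition $f(n) \to \infty$ from the standing hypothesis on $f$ is exactly what is needed to make each of the finitely many subleading ratios vanish in the limit.
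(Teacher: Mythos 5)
Your proposal is correct and follows essentially the same route as the paper: both start from the bound in Lemma~\ref{lem:Delta_bound}, identify $\frac{k}{2}f(n)^{(k-1)/k}$ as the leading term, and use $f(n)\to\infty$ to absorb the finitely many lower-order terms into the $(1+\eps)$ factor. Your version merely spells out the ratio comparison a bit more explicitly than the paper does.
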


\begin{proof}
    This follows by expanding the binomial sum $\frac{1}{2}  \sum_{i=1}^k  \binom{k}{i} f(n)^{(k-i)/k} $ in Lemma~\ref{lem:Delta_bound} to obtain
    \begin{equation*}
        \frac{k}{2} f(n)^{(k-1)/k} + \frac{k(k-1)}{4} f(n)^{(k-2)/k} + \dots + \frac{k}{2} f(n)^{1/k} + 
        \frac{1}{2} ,
    \end{equation*}
    which has leading term $\frac{k}{2} f(n)^{(k-1)/k}$.  Then since $f(n)\to\infty$ we have that
    \begin{equation} \label{eq:epsbound}
        \frac{1}{2}  \sum_{i=1}^k  \binom{k}{i} f(n)^{(k-i)/k} \leq (1 + \eps) \cdot \frac{k}{2} f(n)^{(k-1)/k}
    \end{equation}
    for all sufficiently large $n$.  
\end{proof}

For sufficiently large $d$ we have a corresponding lower bound for $M_{f,k}(d)$.

\begin{prop} \label{prop:Mkdlower}
    Let $k \geq 2$ and let $0<\eps <1$.  Then, for all sufficiently large  $d$, we have
    \begin{equation*}
        M_{f,k}(d) \geq \max \left\{    n \in \N \;\middle|\; f(n) \leq (1 - \eps) \cdot \left( \frac{2d}{k} \right)^{k/(k-1)} \right\} .
    \end{equation*}
    
\end{prop}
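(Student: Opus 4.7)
The plan is to invert the upper bound on $\Delta_{f,k}(n)$ established in Proposition~\ref{prop:Delta_betterbound}. Recall from that proposition that for any $\eps' > 0$, there exists $n_0 = n_0(k,\eps')$ such that
\[
\Delta_{f,k}(n) \leq (1+\eps') \cdot \frac{k}{2} f(n)^{(k-1)/k}
\]
for all $n \geq n_0$. The goal is therefore to show that if $f(n)$ is sufficiently small relative to $d$, then the right-hand side above is at most $d$, so that $n$ lies in the set defining $M_{f,k}(d)$.

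Given $\eps \in (0,1)$, I would first choose $\eps' > 0$ small enough that $(1+\eps')(1-\eps)^{(k-1)/k} \leq 1$; equivalently, any $\eps' < (1-\eps)^{-(k-1)/k} - 1$ works, and such an $\eps'$ exists since $(1-\eps)^{-(k-1)/k} > 1$. Next, let
\[
n^\ast := \max\left\{ n \in \N \;\middle|\; f(n) \leq (1-\eps)\left(\frac{2d}{k}\right)^{k/(k-1)} \right\}.
\]
Because $f(n) \to \infty$, the bound on $f(n)$ grows with $d$, so $n^\ast \to \infty$ as $d \to \infty$. In particular, for all sufficiently large $d$ (depending on $k$ and $\eps$ via $n_0$), we have $n^\ast \geq n_0$, and hence the bound from Proposition~\ref{prop:Delta_betterbound} applies at $n^\ast$.

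Substituting the defining bound on $f(n^\ast)$ into the inequality gives
\[
\Delta_{f,k}(n^\ast) \leq (1+\eps') \cdot \frac{k}{2} \cdot \left[(1-\eps)\left(\frac{2d}{k}\right)^{k/(k-1)}\right]^{(k-1)/k} = (1+\eps')(1-\eps)^{(k-1)/k} \cdot d \leq d,
\]
where the last inequality holds by our choice of $\eps'$. Therefore $n^\ast$ belongs to the set $\{n : \Delta_{f,k}(n) \leq d\}$, and by definition of $M_{f,k}(d)$ we conclude $M_{f,k}(d) \geq n^\ast$, as desired.

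There is no real obstacle here beyond careful $\eps$-bookkeeping: the only subtlety is choosing $\eps'$ first (depending on $\eps$) and then observing that the ``sufficiently large $d$'' hypothesis in the statement absorbs the ``sufficiently large $n$'' hypothesis from Proposition~\ref{prop:Delta_betterbound}, since $n^\ast$ tends to infinity with $d$. One could alternatively work directly from the sharper Lemma~\ref{lem:Delta_bound}, but the lower-order terms in that binomial expansion are exactly what the factor $(1+\eps')$ in Proposition~\ref{prop:Delta_betterbound} is designed to absorb, so routing through Proposition~\ref{prop:Delta_betterbound} yields a cleaner argument.
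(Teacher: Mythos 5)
Your proposal is correct and follows essentially the same route as the paper: both invert the bound $\Delta_{f,k}(n) \leq (1+\eps')\tfrac{k}{2}f(n)^{(k-1)/k}$ for large $n$ (the paper via Lemma~\ref{lem:Delta_bound} and \eqref{eq:epsbound}, you via Proposition~\ref{prop:Delta_betterbound}), choosing $\eps'$ so that the factor $(1-\eps)$ emerges after raising to the power $k/(k-1)$. Your choice $1+\eps' \leq (1-\eps)^{-(k-1)/k}$ is in fact the right exponent (the paper's displayed $\eps' = (1/(1-\eps))^{k/(k-1)}-1$ has the exponent inverted), and evaluating at the single maximizer $n^\ast$ rather than chaining set inclusions is a purely cosmetic difference.
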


\begin{proof}
      Let $\eps^{\prime} := (1/(1 - \eps))^{k/(k - 1)}-1$, and choose $d$ large enough that the largest $n$ satisfying $\frac{1}{2}  \sum_{i=1}^k  \binom{k}{i} f(n)^{(k-i)/k} \leq d$ also satisfies the inequality (\ref{eq:epsbound}) with $\eps'$.  Then by Lemma \ref{lem:Delta_bound}, for all such $d$ we have
    \begin{align*}
        M_{f,k}(d) &\geq \max \left\{  n \in \N \;\middle|\; \frac{1}{2} \sum_{i=1}^k  \binom{k}{i} f(n)^{(k-i)/k} \leq  d \right\} \\
        &\geq \max \left\{  n \in \N \;\middle|\; (1 + \eps') \cdot \frac{k}{2} f(n)^{(k-1)/k} \leq  d \right\} \\
        &= \max \left\{  n \in \N \;\middle|\;  f(n) \leq (1 - \eps) \cdot \left( \frac{2d}{k} \right)^{k/(k-1)} \right\}.
    \end{align*}
\end{proof}

In cases where $f(n)$ grows asymptotically in a similar way to $p(n)$, we can make the lower bound in Proposition \ref{prop:Mkdlower} explicit.

\begin{theorem}\label{asy_Mk}
    Let $f :\N \to  \N$ be a function such that $f(n) \sim an^{-b} \exp(\alpha n^{c})$ for some constants $a > 0, b \in \R, c > 0$ and $\alpha > 0$. Then for any fixed $k \geq 2$, we have
    \[
    M_{f,k}(d) \geq \left(\frac{k}{\alpha(k - 1)} \right)^{\frac{1}{c}} (\log d)^{\frac{1}{c}} \left( 1 + \frac{O(\log \log d)}{\log d} \right).
    \]
\end{theorem}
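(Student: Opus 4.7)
My plan is to apply Proposition \ref{prop:Mkdlower} directly. Fixing any $0 < \eps < 1$, for all sufficiently large $d$ the proposition yields
\[
M_{f,k}(d) \;\geq\; \max\!\left\{ n \in \N \;\middle|\; f(n) \leq (1 - \eps)\left( \tfrac{2d}{k}\right)^{\!k/(k-1)} \right\},
\]
so it suffices to exhibit a single integer $n$ of the size claimed by the theorem that satisfies the inequality above.

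Taking logarithms and substituting the asymptotic $f(n) \sim a n^{-b} \exp(\alpha n^c)$ gives $\log f(n) = \alpha n^c - b \log n + \log a + o(1)$ as $n \to \infty$. The constraint $f(n) \leq (1-\eps)(2d/k)^{k/(k-1)}$ is then equivalent to
\[
\alpha n^c - b \log n \;\leq\; \frac{k}{k-1} \log d + O(1),
\]
where the $O(1)$ absorbs $\log a$, $\log(1-\eps)$, $\tfrac{k}{k-1}\log(2/k)$, and the $o(1)$ error from the asymptotic equivalence. Anticipating that the largest admissible $n$ is of order $(\log d)^{1/c}$, the term $-b \log n$ contributes $-\tfrac{b}{c}\log\log d + O(1)$, so the constraint simplifies to $\alpha n^c \leq \tfrac{k}{k-1}\log d + O(\log\log d)$.

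To exhibit a valid $n$, I would take
\[
n_0 \;=\; \left\lfloor \left(\frac{k}{\alpha(k-1)}\right)^{\!1/c}(\log d)^{1/c} \left(1 - C\,\tfrac{\log\log d}{\log d}\right) \right\rfloor
\]
for a sufficiently large constant $C = C(k,b,c) > 0$, and verify via a first-order Taylor expansion of $(1-x)^c$ that $\alpha n_0^c = \tfrac{k}{k-1}\log d - \tfrac{Cck}{k-1}\log\log d + O(1)$. Combined with $-b \log n_0 = -\tfrac{b}{c}\log\log d + O(1)$, the required inequality reduces to $\bigl(\tfrac{Cck}{k-1} + \tfrac{b}{c}\bigr)\log\log d \geq -O(1)$, which clearly holds once $C$ is chosen large enough. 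This produces $M_{f,k}(d) \geq n_0$, matching the form asserted by the theorem. The entire argument is essentially a careful bookkeeping of lower-order terms in Taylor expansions; the only mild subtlety is choosing $C$ large enough to dominate the sign of $\tfrac{b}{c}$ when $b < 0$, which poses no substantive obstacle.
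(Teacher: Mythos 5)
Your proposal is correct and follows essentially the same route as the paper: invoke Proposition \ref{prop:Mkdlower}, take logarithms of the asymptotic for $f(n)$, and substitute the ansatz $n = \left(\tfrac{k}{\alpha(k-1)}\right)^{1/c}(\log d)^{1/c}(1+\beta_d)$ with $\beta_d = O(\log\log d/\log d)$, verifying the inequality by a first-order expansion. Your choice of a negative correction $\beta_d = -C\log\log d/\log d$ with $C$ large is if anything slightly cleaner than the paper's, since it handles both signs of $b$ uniformly.
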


\begin{proof} Fix $k\geq 2$ and $\eps\in(0,1)$. By Proposition \ref{prop:Mkdlower}, a lower bound for $M_{f,k}(d)$ is given by the the largest $n$ such that
\begin{equation} f(n)\leq(1-\eps)\left(\frac{2d}{k}\right)^\frac{k}{(k-1)}.\label{eq:fbound}\end{equation}
Since $f(n) \sim an^{-b} \exp(\alpha n^{c})$, then for any fixed $\eta\in(0,1)$ there exists $N_\eta$ such that for all $n\geq N_\eta$,
\begin{equation*}f(n)\leq (1+\eta)an^{-b} \exp({\alpha n^c}).\end{equation*}
Thus we guarantee (\ref{eq:fbound}) by requiring
\begin{equation*}
    (1+\eta)an^{-b} \exp({\alpha n^c}) \leq (1-\eps)\left(\frac{2d}{k}\right)^{k/(k-1)},
\end{equation*}
or equivalently
\begin{equation*}
    n^{-b} \exp({\alpha n^c}) \leq C_0d^{k/(k-1)}
\end{equation*}
for some constant $C_0>0$. Taking logarithms yields
\begin{equation}\label{eq:n_ineq}
\alpha n^c\leq \frac{k}{k-1}\log d+b\log n+O(1).
\end{equation}
Now we solve this inequality asymptotically as $d\to\infty$ by proposing that $n$ has the form 
\begin{equation*}
    n=\left(\frac{k}{\alpha(k-1)}\right)^{1/c}(\log d)^{1/c}(1+\beta_d)
    \end{equation*}
    where $\beta_d=o(1)$ is a function to be chosen later. We have
\begin{equation*}
    \alpha n^c=\frac{k}{k-1}(\log d)(1+c\beta_d+O(\beta_d^2))=\frac{k}{k-1}\log d+\frac{k}{k-1}c\beta_d\log d+O(\beta_d^2\log d),
\end{equation*}
and
\begin{equation*}
    b\log n=\frac{b}{c}\log\log d+O(1)+O(\beta_d).
\end{equation*}
Substituting these values in (\ref{eq:n_ineq}), we have
\begin{equation*}
    \frac{k}{k-1}c\beta_d\log d\leq \frac{b}{c}\log\log d+O(1)+O(\beta_d)+O(\beta_d^2\log d).
\end{equation*}
Now we see that we can choose $\beta_d=C_1\frac{\log\log d}{\log d}$ for some constant $C_1>0$ so that $\beta_d$ and $\beta_d^2\log d$ are each $o(\log\log d)$ and the inequality is satisfied for sufficiently large $d$.
\end{proof}

In particular, applying Theorem \ref{asy_Mk} to $f(n)=p(n)$ and using the Hardy-Ramanujan asymptotic proves Theorem \ref{thm:Mkd_intro}.

\section{Lower bounds for $N_{f,d}$}\label{sec:Nbounds}

In this section we consider $f$ such that for any $d$, 
\begin{equation}\label{eq:Nhypothesis}
\{M_{f,k}(d)\}_{k=2}^\infty \rightarrow L_f(d),
\end{equation}
where $L_f(d)$ is as defined in \eqref{eq:Lf}. In particular, $L_f(d)$ is monotonically increasing in $d$.  Note that if $f(n) \rightarrow \infty$ as $n \rightarrow \infty$, then it follows that $L_f(d) \rightarrow \infty$ as $d \rightarrow \infty$.

Under hypothesis \eqref{eq:Nhypothesis}, we define the constant $N_{f,d}$ by
\begin{equation}\label{eq:Ndef}
    N_{f,d} = \min \{ n \in \N : M_{f, k}(d) = L_f(d) \text{ for all } k \geq n \},
\end{equation}
i.e., $N_{f,d}$ is the smallest value of $k$ past which $\{M_{f,k}(d)\}_{k=1}^{\infty}$ is stable at $L_f(d)$.

Merca--Ono--Tsai \cite[p.~104]{MercaOnoTsai} observed that, for the partition function $p(n)$, the value of $N_d$ appears to grow like $O(\log{d})$.  In the following, we prove a logarithmic lower bound for functions $f(n)$ with sufficiently controlled growth. 

\begin{theorem} \label{thm:ndlower}
    If $f(n)$ satisfies \eqref{eq:Nhypothesis} and $f(n+1) < \frac{3}{2} f(n)$ for all sufficiently large $n$, then $$N_{f,d} \geq \lfloor \log_2(d) \rfloor + 1$$ for all sufficiently large $d$. 
\end{theorem}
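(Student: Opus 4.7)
The plan is to translate the inequality $N_{f,d} \geq \lfloor \log_2 d \rfloor + 1$ into the concrete task of producing, for all sufficiently large $d$, some $k \geq \lfloor \log_2 d \rfloor$ and some $n > L_f(d)$ with $\Delta_{f,k}(n) \leq d$: by the definition of $N_{f,d}$ in \eqref{eq:Ndef}, this is exactly what it means for $M_{f,k}(d) \neq L_f(d)$ at some such $k$. The guiding idea is that the growth hypothesis $f(n+1) < \tfrac{3}{2} f(n)$ prevents the sequence $\{f(n)\}$ from skipping over a target $T$ by more than a factor of $3/2$, so $f$ must land within distance $T/2$ of $T$ at some index; choosing $T$ to be a perfect $k$-th power just above $d+1$ turns this into the required approximation for free.

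Concretely, I would take $k = \lfloor \log_2 d \rfloor + 1$, so that $2^{k-1} \leq d$ and $d < 2^k \leq 2d$, and set
\begin{equation*}
    n^{*} := \min \{ n \in \N : f(n) \geq 2^k \},
\end{equation*}
which is well-defined and grows with $d$ since $f(n) \to \infty$. By minimality $f(n^{*} - 1) < 2^k$, so for $d$ large enough that the growth hypothesis applies at $n^{*} - 1$ one obtains
\begin{equation*}
    2^k \;\leq\; f(n^{*}) \;<\; \tfrac{3}{2} f(n^{*} - 1) \;<\; \tfrac{3}{2} \cdot 2^k,
\end{equation*}
hence $0 \leq f(n^{*}) - 2^k < 2^{k-1} \leq d$, which yields $\Delta_{f,k}(n^{*}) \leq d$. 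Since also $f(n^{*}) \geq 2^k > d + 1$, monotonicity of $f$ (invoked implicitly through the definition of $L_f(d)$, as is the case for the partition functions motivating the paper) gives $n^{*} > L_f(d)$, and therefore $M_{f,k}(d) \geq n^{*} > L_f(d)$.

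The one snag is the corner case $d = 2^{j} - 1$, in which the choice above gives $2^k = d + 1$ exactly, so the strict inequality $f(n^{*}) > d + 1$ cannot be guaranteed. I would handle this by bumping $k$ up by one, replacing it with $\lfloor \log_2 d \rfloor + 2$: then $2^k = 2d + 2$, still $k \geq \lfloor \log_2 d \rfloor$, and the same calculation yields $f(n^{*}) - 2^k \leq d - 1$ together with $f(n^{*}) \geq 2d + 2 > d + 1$. In both cases we exhibit some $k \geq \lfloor \log_2 d \rfloor$ and some $n^{*} > L_f(d)$ with $\Delta_{f,k}(n^{*}) \leq d$, which forces $N_{f,d} > \lfloor \log_2 d \rfloor$ as desired. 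I do not foresee any substantive obstacle beyond keeping track of this boundary case and requiring $d$ large enough that $n^{*} \geq 2$ lies in the range where $f(n+1) < \tfrac{3}{2} f(n)$ is known to hold; the heart of the argument is a one-line pigeonhole that a growth rate strictly below $3/2$ cannot skip a window of length $2^{k-1}$.
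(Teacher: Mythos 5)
Your core pigeonhole --- that a growth ratio strictly below $3/2$ forces $f$ to land within half a window of a target power of $2$ --- is exactly the mechanism of the paper's proof, but you anchor it differently. The paper fixes the index $n = L_f(d)+1$ and the exponent $k = \lfloor \log_2 d\rfloor$, so that $2^k \leq d < d+1 < f(L_f(d)+1) < \tfrac32 f(L_f(d)) \leq \tfrac32(d+1)$, whence $0 < f(L_f(d)+1) - 2^k < d+1$; you instead fix the power $2^k$ just above $d$ and take $n^*$ to be the first index at which $f$ crosses it. Both are legitimate, since the definition of $N_{f,d}$ only requires exhibiting \emph{some} $k \geq \lfloor\log_2 d\rfloor$ with $M_{f,k}(d) > L_f(d)$, and your corner-case bump of $k$ is handled correctly (the bound there is $f(n^*)-2^k \leq d$ rather than $\leq d-1$, but that is all you need).

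The one genuine issue is the step $n^* > L_f(d)$. You deduce it from $f(n^*) \geq 2^k > d+1$ via monotonicity of $f$, but the theorem is stated for an arbitrary $f:\N\to\N$ with $f(n)\to\infty$ satisfying the $3/2$-growth bound, and neither hypothesis prevents $f$ from decreasing: $f$ could exceed $2^k$ at some small index $n^*$ and dip back below $d+1$ afterwards, giving $L_f(d) \geq n^*$ and leaving $M_{f,k}(d) > L_f(d)$ unestablished. The repair is cheap: either redefine $n^* := \min\{n > L_f(d) : f(n) \geq 2^k\}$ (nonempty since $f(n)\to\infty$; minimality still yields $f(n^*-1) \leq 2^k$, because either $n^*-1 > L_f(d)$ and $f(n^*-1) < 2^k$, or $n^*-1 = L_f(d)$ and $f(n^*-1) \leq d+1 \leq 2^k$, so $f(n^*) < \tfrac32\cdot 2^k$ as before), or take $n = L_f(d)+1$ outright, which is what the paper does and which makes $n > L_f(d)$ automatic from the definition of $L_f(d)$ as a maximum. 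With that adjustment your argument is sound; as written it proves the theorem only under the additional (unstated, though satisfied by the motivating partition functions) assumption that $f$ is eventually non-decreasing.
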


\begin{proof}
    Since $f(n) \rightarrow \infty$ we have $L_f(d) \rightarrow \infty$ as $d \rightarrow \infty$.  Thus for sufficiently large $d$, we have 
    \begin{equation}\label{eq:star}
    f(L_f(d)+1) < \frac{3}{2} f(L_f(d)) \leq \frac{3}{2} (d+1)
    \end{equation} by our hypothesis.  Since $\{M_{f,k}(d)\}_{k=2}^\infty \rightarrow L_f(d)$, to prove the claim it suffices to show that $M_{f,k}(d) > L_f(d)$ for $k = \floor{\log_2(d)}$.
    
By definition of $L_f(d)$, it follows that $f(L_f(d) + 1) > d + 1$.  Fixing $k = \floor{\log_2(d)}$, we have $d \geq 2^k$, and therefore $f(L_f(d)+1) - 2^k > 0$.  We also have that $\frac{d}{2} < 2^k$, and by considering the parity of $d$ it follows that $\frac{d+1}{2} \leq 2^k$.   Therefore from \eqref{eq:star} we have
    \begin{equation*}
       0 <  f(L_f(d) + 1) - 2^k < \frac{3}{2}(d+1) - \frac{d+1}{2} = d + 1 .
    \end{equation*}
    It follows that  $|f(L_f(d) + 1) - 2^k | < d+1 $, and so $|f(L_f(d) + 1) - 2^k | \leq d$.  Hence by \eqref{eq:deltadef}, $\Delta_{f,k}(L_f(d)+1) \leq d$, which implies $M_{f,k}(d) > L_f(d)$.  This proves the claim.
    
\end{proof}

If $f(n)$ has the property that $f(n) \sim an^{-b} \exp( \alpha n^c)$ for $\alpha > 0$ and $c \in (0, 1)$, then $f$ satisfies the assumptions of Theorem~\ref{thm:ndlower}.  In this case we prove a more refined bound for $N_{f,d}$. 

\begin{theorem}\label{thm:ndlower2terms}
    Suppose $f(n)$ satisfies \eqref{eq:Nhypothesis} and also that $f(n) \sim an^{-b} \exp( \alpha n^c)$ 
    as $n \to \infty$, for some $\alpha > 0$ and $c \in (0, 1)$.
    Then for any constant $0 < A < (1 - c)/(c \log 2)$, we have 
    \[
    N_{f,d} > \log_{2}(d) + A \log \log d 
    \]
    for all sufficiently large $d$. 
\end{theorem}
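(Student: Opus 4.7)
The plan is to show that for the integer $k := \floor{\log_2 d + A \log \log d}$, we have $M_{f,k}(d) > L_f(d)$; this forces $N_{f,d} \geq k+1 > \log_2 d + A \log \log d$. Following the strategy of Theorem~\ref{thm:ndlower}, I would look for some $n > L_f(d)$ satisfying $|f(n) - 2^k| \leq d$, so that $\Delta_{f,k}(n) \leq d$ via the candidate $k$th root $m = 2$.

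The first step is to use $f(n) \sim an^{-b}\exp(\alpha n^c)$ to place the relevant quantities. Inverting the asymptotic at $f(L_f(d)) \leq d+1 < f(L_f(d)+1)$ gives $L_f(d) = (\alpha^{-1}\log d)^{1/c}(1 + o(1))$, and since $k \geq \log_2 d + A\log\log d - 1$ one obtains $2^k \geq \tfrac{1}{2}d (\log d)^{A\log 2}$, so $2^k$ dwarfs $f(L_f(d)+1) < \tfrac{3}{2}(d+1)$ for $d$ large. Now define
\[
n^\ast := \min\{n > L_f(d) : f(n) \geq 2^k - d\},
\]
which exists because $f(n) \to \infty$, and necessarily satisfies $n^\ast \geq L_f(d)+2$. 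The argument reduces to showing $f(n^\ast) \leq 2^k + d$, for then $\Delta_{f,k}(n^\ast) \leq d$ and $M_{f,k}(d) \geq n^\ast > L_f(d)$.

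Suppose, for contradiction, that $f(n^\ast) > 2^k + d$. By minimality of $n^\ast$, $f(n^\ast - 1) < 2^k - d$, so $f(n^\ast) - f(n^\ast - 1) > 2d$. Differentiating the asymptotic yields $f(n+1)-f(n) \sim f(n)\cdot \alpha c\, n^{c-1}$, and inverting the asymptotic at the bound $f(n^\ast - 1) < 2^k$ gives $n^\ast - 1 = (\alpha^{-1}\log d)^{1/c}(1 + O(\log\log d / \log d))$. Substituting,
\[
f(n^\ast) - f(n^\ast - 1) \;\lesssim\; 2^k \cdot c\alpha^{1/c}(\log d)^{(c-1)/c} \;\asymp\; c\alpha^{1/c}\cdot d\cdot (\log d)^{A\log 2 + (c-1)/c}.
\]
The hypothesis $A < (1-c)/(c\log 2)$ is precisely what makes the exponent $A\log 2 + (c-1)/c$ negative, so this gap is $o(d)$ as $d\to\infty$, contradicting the lower bound $2d$. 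Hence $f(n^\ast) \leq 2^k + d$ and the claim follows.

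The main obstacle is rigorously justifying the gap estimate $f(n+1)-f(n) \sim f(n)\cdot \alpha c\, n^{c-1}$. A bare pointwise asymptotic $f(n) = an^{-b}\exp(\alpha n^c)(1+o(1))$ does not automatically control consecutive differences, since for $c < 1$ the relative increment $\alpha c\, n^{c-1}$ itself tends to zero and the multiplicative errors $1+o(1)$ could easily swamp it. In practice one needs a quantitative error term, for instance $1 + O(n^{c-1})$ or better, which is available for every natural example of interest (most importantly for $p(n)$ via the Rademacher series). With such an explicit error in hand, the gap computation is routine and the argument above runs as sketched.
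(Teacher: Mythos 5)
Your proof is correct modulo the one issue you flag yourself, and it is essentially the paper's argument run in the reverse direction. The paper fixes the range first: it defines $n_d$ as in \eqref{eq:nddef} to be the last index beyond $L_f(d)$ up to which all consecutive gaps $f(j+1)-f(j)$ stay $\leq 2d$, uses the difference asymptotic \eqref{eq:diff_asymp} together with $L_f(d) \asymp (\log d)^{1/c}$ from \eqref{asy_Ld} to show $\log_2 f(n_d+1) > \log_2 d + \tfrac{1-c}{c\log 2}\log\log d + O(1)$, and only then chooses the exponent $k_d = \floor{\log_2 f(n_d+1)}$ so that $2^{k_d}$ lands in $[f(L_f(d)+1), f(n_d+1)]$, where every real number is within $d$ of some $f(s)$. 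You instead fix $k = \floor{\log_2 d + A\log\log d}$ at the outset and argue that $f$ cannot jump over the window $[2^k-d,\,2^k+d]$: a jump would force a consecutive gap exceeding $2d$ at a point $n^\ast$ where $f(n^\ast-1) < 2^k \ll d(\log d)^{A\log 2}$ and $n^\ast - 1 > L_f(d) \gg (\log d)^{1/c}$, so the gap bound caps it at $O\bigl(d(\log d)^{A\log 2 - (1-c)/c}\bigr) = o(d)$, with the hypothesis $A < (1-c)/(c\log 2)$ entering exactly where it should. The two arguments use identical ingredients; yours is marginally tidier in that it avoids introducing $n_d$ and verifying its finiteness, while the paper's version exhibits the full strength of the exponent $\tfrac{1-c}{c\log 2}$ rather than an arbitrary $A$ below it. As for the obstacle you raise: you are right that $f(n+1)-f(n)\sim \alpha c\, n^{c-1}f(n)$ does not follow from the bare pointwise asymptotic, since unquantified relative errors $1+o(1)$ can swamp the increment $\alpha c\, n^{c-1}\to 0$; but the paper's own derivation of \eqref{eq:diff_asymp} makes exactly this leap (treating $f(n+1)/f(n)-1$ as if it equalled $A(n+1)/A(n)-1$), so this is a shared looseness rather than a defect specific to your route. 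For $p(n)$ and the other functions in Table~\ref{tab:asymptotics} the required quantitative control is available from the refined expansions, as you note.
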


\begin{proof}
From our hypotheses we have that for every $\epsilon > 0$, there exists $N_{\epsilon}$ such that for all $n \geq N_{\epsilon}$, 
\[
(\alpha - \epsilon) n^c \leq \log f(n) \leq (\alpha + \epsilon) n^c.
\]
Thus for $n \geq N_{\epsilon}$,
\begin{equation}\label{eq:hypbds}
\exp((\alpha - \epsilon) n^c) \leq f(n) \leq \exp((\alpha + \epsilon) n^c).
\end{equation}
Fix $\epsilon \in (0, \alpha)$. Since $f(n) \rightarrow \infty$ we have $L_f(d) \rightarrow \infty$, so for sufficiently large $d$ we ensure that $L_d \geq N_{\epsilon}$. Using \eqref{eq:hypbds} and the fact that $f(L_f(d)) \leq d + 1$ by definition, we have
\[
\exp((\alpha - \epsilon) L_f(d)^c) \leq f(L_f(d)) \leq d + 1.
\]
Thus,
\[
L_f(d)^c \leq \frac{\log (d + 1)}{\alpha - \epsilon},
\]
and setting $C_2 = \left(\frac{1}{\alpha - \epsilon}\right)^{1/c}$, we have for sufficiently large $d$,
\begin{equation}\label{eq:LbdC2}
L_f(d) \leq C_{2} (\log d)^{1/c}.
\end{equation}
Further set $C_1 = \left(\frac{1}{\alpha + \epsilon} \right)^{1/c}$.  Considering $n_0 = C_1 (\log(d + 1))^{1/c}$, \eqref{eq:hypbds} implies
\[
\log f(n_0) \leq (\alpha + \epsilon) n_0^c \leq \log (d + 1),
\]
and thus $f(n_0) \leq d + 1$ and $n_0 \leq L_f(d)$.  Hence, $L_f(d) \geq C_1 (\log(d + 1))^{1/c}$, and with \eqref{eq:LbdC2} we obtain that for sufficiently large $d$,
\begin{equation}\label{asy_Ld}
    C_{1} (\log d)^{1/c} <  L_f(d) \leq C_{2} (\log d)^{1/c}.
\end{equation}

We now derive an asymptotic for the difference $g(n) := f(n + 1) - f(n)$.  By the Maclaurin series expansion for $e^{-x}$, we note that we have the asymptotic $1 - e^{-x} \sim x$ as $x \to 0$.  Following a similar strategy as done in \cite[Theorem~1]{AGHHPSS}, the asymptotic for the ratio $g(n) / f(n)$ is given by
\begin{equation*}
    \frac{g(n)}{f(n)} =  \frac{f(n + 1)}{ f(n) }  - 1 
    \sim \exp{\left( b \log \Big( \frac{n}{n + 1} \Big) + \alpha  \left(  (n+1)^c - n^c \right) \right)} - 1. 
\end{equation*}
Since $1 - e^{x} \sim -x \text{ as } x \to 0$, we obtain $\tfrac{g(n)}{f(n)} \sim \alpha c n^{c-1}$, using that facts that $\log\!\bigl(\tfrac{n}{n + 1}\bigr) = - \tfrac{1}{n} + O(n^{-2})$, and $(n + 1)^c - n^c = cn^{c-1} + O(n^{c-2})$.
Therefore, we have the asymptotic
    \begin{equation} \label{eq:diff_asymp}
        g(n) \sim f(n) \alpha c n^{c-1}
    \end{equation}
    as $n \to \infty$.

Using \eqref{eq:diff_asymp}, there exists $N_1 > 0$ such that for all $n \geq N_1$, we have
\begin{equation}\label{f_inc}
B_{1} f(n) n^{c - 1} \leq f(n + 1) - f(n) \leq B_{2} f(n) n^{c - 1},
\end{equation}
for some two positive constants $B_{1}, B_2 > 0$.

For $d$ large enough to ensure $L_f(d) \geq \max \{N_\epsilon, N_1\}$, define
\begin{equation}\label{eq:nddef}
n_{d}:= \max \{n \geq L_f(d) : f(j + 1) - f(j) \leq 2d  \text{ for all } L_f(d) \leq j \leq n \}.
\end{equation}
Using \eqref{asy_Ld} and \eqref{f_inc}, it follows that the above set is non-empty at $j = L_f(d)$ for sufficiently large $d$.  Moreover, $n_{d}$ is finite since \eqref{eq:hypbds} and \eqref{f_inc} together imply that $f(n + 1) - f(n) \rightarrow \infty$ as $n \rightarrow \infty$.  By the maximality of $n_{d}$, we have that $f(n_d + 2) - f(n_d + 1) > 2d$, and for any $L_{f, d} \leq n \leq n_d$,
\begin{equation}\label{bound_f}
f(n + 1) - f(n) \leq 2d. 
\end{equation}
Note that \eqref{bound_f} implies that any real $y\in [f(L_f(d) + 1), f(n_d + 1)]$ can be at most $d$ away from the nearest $f(j)$ value.  In particular, for any such $y$ there exists an integer $s_y\in [L_f(d) + 1 , n_d + 1]$ such that 
\begin{equation}\label{ext_s}
|f(s_y) - y| \leq d. 
\end{equation}

Moreover, since $f(n_d + 2) - f(n_d + 1) > 2d$, using \eqref{f_inc} gives
\[
2d < f(n_d + 2) - f(n_d + 1) \leq B_2 f(n_d + 1) (n_d + 1)^{c - 1}.
\]
This implies that
\begin{equation} \label{lower_bound_f}
    f(n_d + 1) > \frac{2d}{B_2} (n_d + 1)^{1 - c}, 
\end{equation}
and thus 
\begin{equation}\label{eq:1}
    \log f(n_d + 1) > \log d + (1 - c) \log (n_d + 1) + O(1).
\end{equation}
Since $n_d \geq L_f(d)$, \eqref{asy_Ld} implies that for $d$ sufficiently large, $n_d > C_1 (\log d)^{1/c}$. Therefore,
\begin{equation} \label{eq:2}
    \log (n_d + 1) > \log C_1 + c^{-1} \log \log d.
\end{equation}
Substituting the value of \eqref{eq:2} in \eqref{eq:1}, we obtain that
\begin{equation*}
\log f(n_d + 1)  > \log d + \frac{1 - c}{c} \log \log d + O(1).
\end{equation*}
Therefore, we have
\begin{equation} \label{eq:3}
    \log_2 f(n_d + 1) = \frac{\log f(n_d + 1)}{\log 2} >  \log_2 d + \frac{1 - c}{c \log 2} \log \log d + O(1).
\end{equation}
We define
\begin{equation}\label{eq:kddef}
k_d := \lfloor \log_2 f(n_d + 1)   \rfloor,
\end{equation}
which immediately gives 
\begin{equation}\label{eq:trivbd}
2^{k_d} \leq f(n_d + 1) < 2^{k_d + 1}. 
\end{equation}
From \eqref{eq:3} we observe that for any fixed $0 < A < \frac{1 - c}{c \log 2}$, we have for all sufficiently large $d$, 
\begin{equation}\label{eq:fndbds}
\log_2 f(n_d + 1) > \log_2 d + A \log \log d.
\end{equation}
From \eqref{eq:kddef} and \eqref{eq:fndbds}, we deduce that $k_d \geq \lfloor \log_2 d + A \log \log d \rfloor$ for all sufficiently large $d$. Further, since $\frac{f(n_d + 1)}{f(L_f(d) + 1)} \rightarrow \infty$ as $d \rightarrow \infty$, 
we have $f(n_d + 1) \geq 2 f(L_f(d) + 1)$ for sufficiently large $d$. With \eqref{eq:trivbd} this gives
\[
f(L_f(d) + 1) \leq \frac{1}{2} f(n_d + 1) < 2^{k_d} \leq f(n_d + 1).
\]

Thus by \eqref{ext_s}, there exists an integer $s$ with $L_f(d) + 1 \leq s \leq n_d + 1$ such that $|f(s) - 2^{k_d}| \leq d$. Thus with \eqref{eq:deltadef} and \eqref{eq:Mdef} we have 
\begin{equation*}
M_{f, k_d} \geq s > L_d.
\end{equation*}
From \eqref{eq:Ndef}, we conclude that $N_{f,d}>k_d$.  Thus using \eqref{eq:kddef} and \eqref{eq:fndbds}, we obtain
\[
N_{f,d} \geq k_d+1 > \log_2 f(n_d+1) \geq  \log_2 d  + A \log \log d
\]
for sufficiently large $d$. 
\end{proof}

Since \eqref{eq:HRasymptotic} guarantees that $p(n)$ satisfies the growth condition in the hypotheses of Theorem \ref{thm:ndlower2terms}, then Proposition \ref{prop:limit} and Theorem \ref{thm:ndlower2terms} together prove Theorem \ref{thm:Nd_intro}.

\section{Equidistribution} \label{sec:equidistribution}

Here we present strong supporting evidence for the following conjecture.  Let $\{ r \}$ denote the fractional part of a real number $r$.

\begin{conj}\label{conj: equidistribution} For each fixed integer $k\geq 2$, the sequence of the fractional parts of $p(n)^{1/k}$, 
$$\{ \{ \sqrt[k]{p(n)}\} \}_{n\geq 1},$$
is equidistributed in $[0,1)$. \end{conj}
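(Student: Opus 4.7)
The plan is to apply Weyl's equidistribution criterion: for a fixed $k \geq 2$, the conjecture is equivalent to proving that, for every non-zero integer $h$,
\begin{equation*}
S_h(N) := \sum_{n=1}^{N} e^{2\pi i h\, p(n)^{1/k}} = o(N) \quad \text{as } N \to \infty.
\end{equation*}

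The first step would be to replace $p(n)$ by a smooth approximation accurate enough to control $h\cdot p(n)^{1/k}$ modulo $1$. Using Rademacher's exact series $p(n) = \sum_{c \geq 1} T_c(n)$, the $c$-th term satisfies $T_c(n)/T_1(n) = O\bigl(\exp(-\pi\sqrt{2n/3}\,(1-1/c))\bigr)$. Truncating at a fixed $C = C(k)$ sufficiently large and taking the $k$-th root yields a real-analytic function $G(n) := \bigl(\sum_{c \leq C} T_c(n)\bigr)^{1/k}$ such that
\begin{equation*}
p(n)^{1/k} = G(n) + \eta(n), \qquad h\,\eta(n) \to 0 \text{ as } n\to\infty.
\end{equation*}
Consequently $e^{2\pi i h\, p(n)^{1/k}} = e^{2\pi i h G(n)} + o(1)$, and the problem reduces to proving $\widetilde{S}_h(N) := \sum_{n \leq N} e^{2\pi i h G(n)} = o(N)$.

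The second step would be the exponential sum estimate for $\widetilde{S}_h(N)$. A dyadic decomposition breaks $[1,N]$ into blocks $[M, 2M]$ on which $G(n) \asymp n^{-1/k}\exp\bigl(\tfrac{\pi}{k}\sqrt{2n/3}\bigr)$ and $G^{(j)}(n) \asymp G(n)\, n^{-j/2}$. Since $hG'(n)$ is strictly monotone, one would further partition $[M, 2M]$ according to the integer part of $hG'(n)$ and invoke the Kuzmin--Landau estimate (or a higher van der Corput derivative test) on each sub-piece where $\|hG'(n)\|$ stays bounded away from $0$; summing the contributions across all sub-pieces and then dyadically in $M$ would deliver the desired bound.

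The main obstacle lies precisely in executing this second step. Because $|hG'(n)| \asymp |h|\exp\bigl(\tfrac{\pi}{k}\sqrt{2n/3}\bigr)/\sqrt{n}$ is astronomically larger than $1$, classical low-order van der Corput tests produce bounds vastly weaker than the trivial $M$. To obtain genuine cancellation one is forced to exploit equidistribution of the gaps $G(n+1)-G(n)$ modulo $1$, which ultimately reduces to an arithmetic question about $\bigl\{\exp\bigl(\tfrac{\pi}{k}\sqrt{2n/3}\bigr)\bigr\}$. Problems of this form -- for instance, equidistribution of $\{e^{\alpha\sqrt{n}}\}$ or of $\{\alpha^n\}$ for generic $\alpha$ -- appear to be beyond current techniques, which is why we state Conjecture~\ref{conj: equidistribution} only as a conjecture supported by the strong numerical evidence presented below.
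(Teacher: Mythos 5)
The statement you were asked about is Conjecture~5.1 of the paper, and the paper does not prove it: Section~5 offers only numerical evidence (histograms of the fractional parts, empirical distribution functions compared against the uniform CDF, and Kolmogorov--Smirnov discrepancy statistics for $n$ up to $5\times 10^5$), and explicitly states that these computations ``do not constitute proof.'' Your proposal likewise does not prove the statement, and you say so honestly, so there is no gap to fault you for; rather, you have supplied something the paper does not attempt, namely an analysis of \emph{why} the statement resists proof. Your route is the standard one: Weyl's criterion reduces the claim to $\sum_{n\le N} e^{2\pi i h\, p(n)^{1/k}} = o(N)$, and your first reduction is sound --- although the absolute error of the Rademacher series truncated at a fixed number of terms $C$ grows like $\exp\bigl(\tfrac{\pi}{C+1}\sqrt{2n/3}\bigr)$, after taking $k$th roots it is damped by a factor $\asymp p(n)^{1/k-1}$, so $h\,\eta(n)\to 0$ once $C+1 > k/(k-1)$, which holds already for $C=2$. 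Your diagnosis of the second step is also correct: the derivative $hG'(n)$ is exponentially large, so Kuzmin--Landau and the van der Corput derivative tests give nothing better than the trivial bound, and the problem becomes equidistribution for a doubly-exponentially-spaced sequence of the type $\{e^{\alpha\sqrt{n}}\}$, which is of the same flavour as notoriously open questions such as the distribution of $\{(3/2)^n\}$. In short, your assessment agrees with the paper's: the statement is properly a conjecture, and your write-up is a reasonable complement to the paper's purely empirical support, not a competing proof.
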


We have computed the sequence for $1\leq n\leq N$ with $N= 5000$ and $N= 500,000$ and tested equidistribution using several complementary diagnostics: histograms, empirical distribution functions, and discrepancy estimates. 

\begin{figure}[h!]
    \centering

    \begin{subfigure}{0.49\linewidth}
        \centering
        \includegraphics[width=\linewidth]{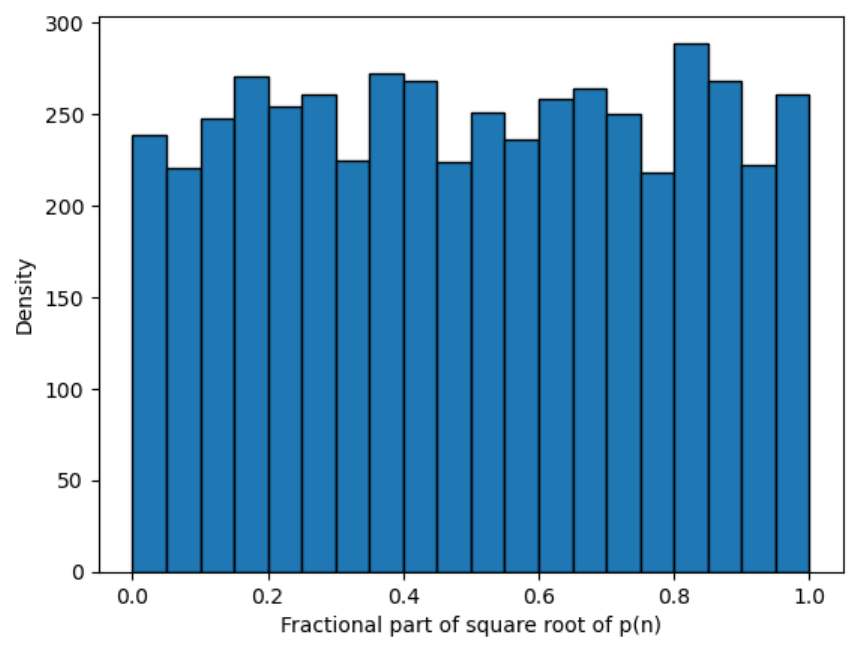}
        \caption{$k=2$, $n\leq 5000$}
        \label{fig:histogram k=2 5k}
    \end{subfigure}
    \hfill
    \begin{subfigure}{0.49\linewidth}
        \centering
        \includegraphics[width=\linewidth]{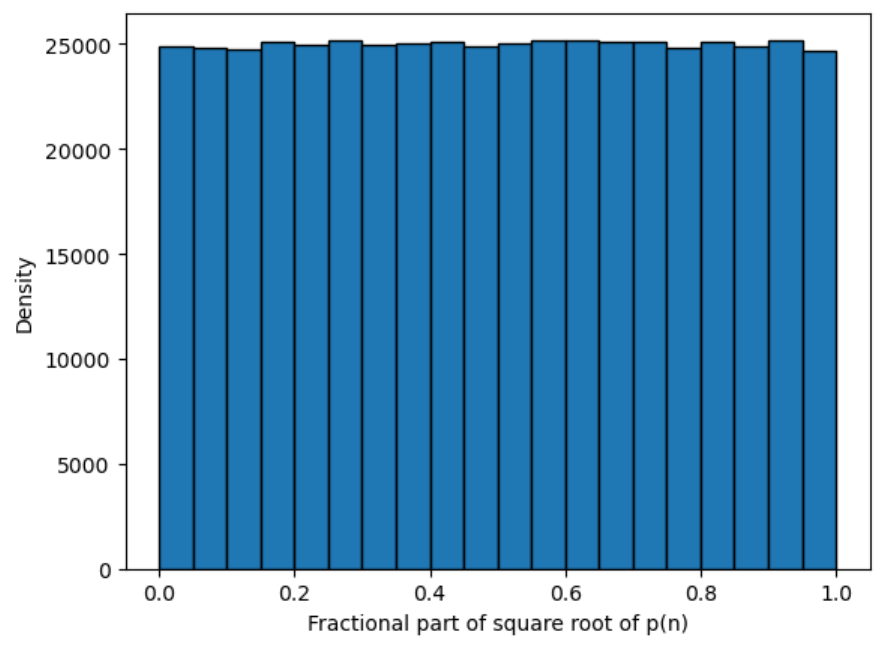}
        \caption{$k=2$, $n\leq 500\,000$}
        \label{fig:histogram k=2 500k}
    \end{subfigure}

    \vspace{1em}

    \begin{subfigure}{0.49\linewidth}
        \centering
        \includegraphics[width=\linewidth]{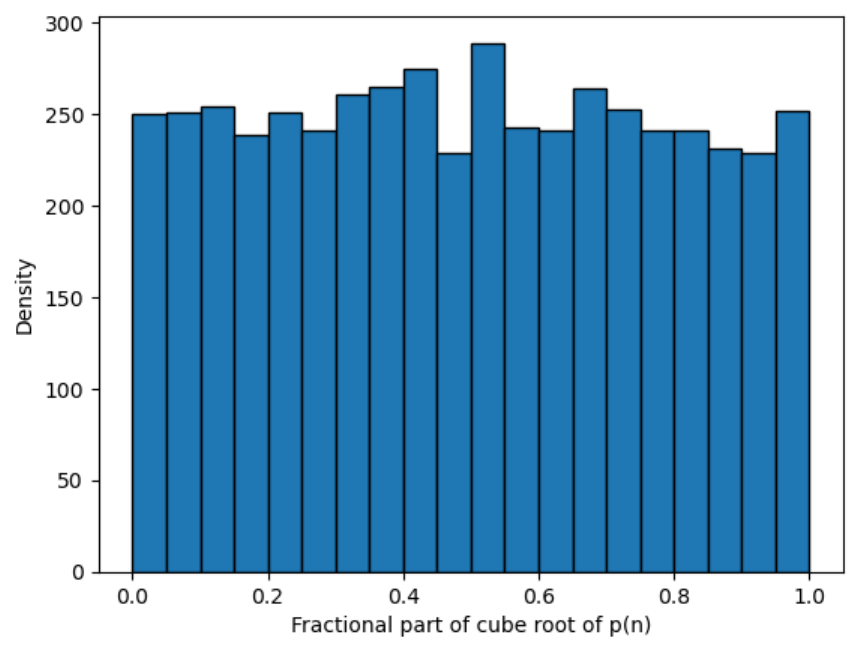}
        \caption{$k=3$, $n\leq 5000$}
        \label{fig:histogram k=3 5k}
    \end{subfigure}
    \hfill
    \begin{subfigure}{0.49\linewidth}
        \centering
        \includegraphics[width=\linewidth]{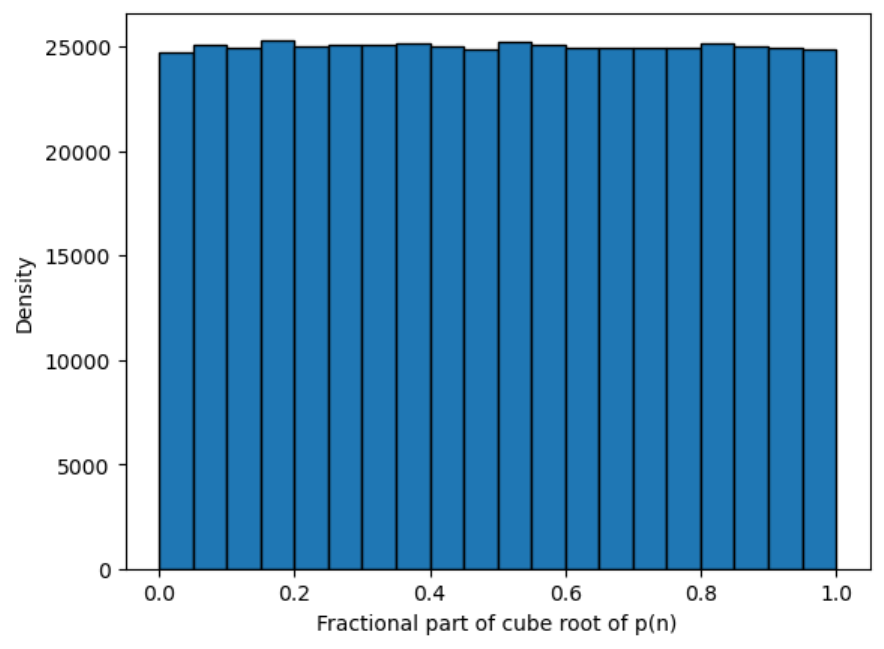}
        \caption{$k=3$, $n\leq 500\,000$}
        \label{fig:histogram k=3 500k}
    \end{subfigure}

\vspace{1em}

    \begin{subfigure}{0.49\linewidth}
        \centering
        \includegraphics[width=\linewidth]{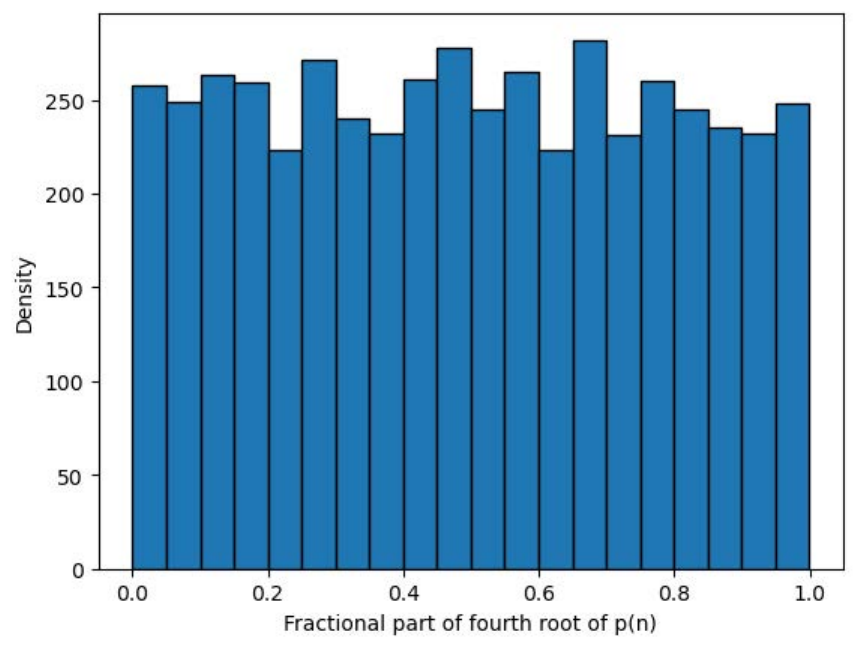}
        \caption{$k=4$, $n\leq 5000$}
        \label{fig:histogram k=4 5k}
    \end{subfigure}
    \hfill
    \begin{subfigure}{0.49\linewidth}
        \centering
        \includegraphics[width=\linewidth]{frrt_500000.pdf}
        \caption{$k=4$, $n\leq 500\,000$}
        \label{fig:histogram k=4 500k}
    \end{subfigure}

    \caption{Histograms showing distribution of fractional parts of $p(n)^{1/k}$ for $k=2,3,4$ and increasing values of $n$.}
    \label{fig:histogram}
\end{figure}

Figure~\ref{fig:histogram} shows histograms of fractional parts of $p(n)^{1/k}$ for $k=2,3,4$ and $n\leq N$, with $N=5000$ and $N=500,000$. 
We notice, as $N$ increases, the histograms become progressively flatter, suggesting that the distribution of fractional parts stabilizes toward equidistribution. 
While histogram-based diagnostics depend on the choice of subintervals, we observed this stabilization for increasing $N$ across a wide range of sizes of subintervals, providing consistent numerical evidence for equidistribution.

To remove this dependence and to obtain a more quantitative picture, we next plot the empirical distribution function (EDF) of the fractional parts of $p(n)^{1/k}$, given by 
$$F_N(x)=\frac{\#\{n\leq N\ |\ \{p(n)^{1/k}\}\leq x \}}{N},$$
against the diagonal corresponding to the uniform distribution on $[0,1)$.

\begin{figure}[h!]
    \centering

    \begin{subfigure}{0.49\linewidth}
        \centering
        \includegraphics[width=\linewidth]{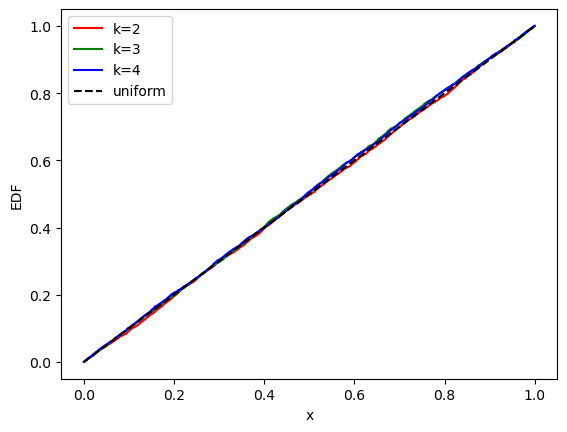}
        \caption{$n \leq 5000$}
        \label{fig:edf 5k}
    \end{subfigure}
    \hfill
    \begin{subfigure}{0.49\linewidth}
        \centering
        \includegraphics[width=\linewidth]{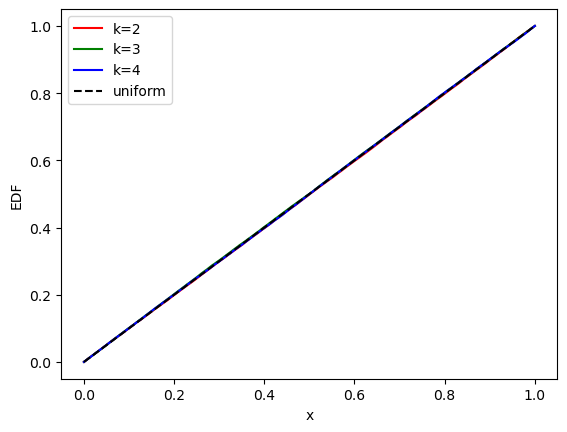}
        \caption{$n \leq 500\,000$}
        \label{fig:edf 500k}
    \end{subfigure}

    \caption{Empirical distribution functions of fractional parts of $p(n)^{1/k}$ for $k=2,3,4$ and increasing values of $n$, compared with the uniform distribution on $[0,1)$.}
    \label{fig:edf}
\end{figure}

Figure~\ref{fig:edf} compares the empirical distribution functions of the fractional parts of $p(n)^{1/k}$ for $k=2,3,4$ and $n\leq N$ with $N= 5000$ and $N= 500,000$ with the uniform distribution. 
We observe that the empirical distribution functions closely track the diagonal $y=x$, and the deviations appear to decrease significantly as $N$ increases.

We next compute the Kolmogorov–Smirnov statistic
$$D_N= \max_{x\in [0,1)} |F_N(x)-x|. $$
This discrepancy measures the maximal deviation of the EDF from the uniform distribution and provides a quantitative complement to the visual EDF plots.

\begin{figure}[htbp]
    \centering
    \includegraphics[width=.8\textwidth]{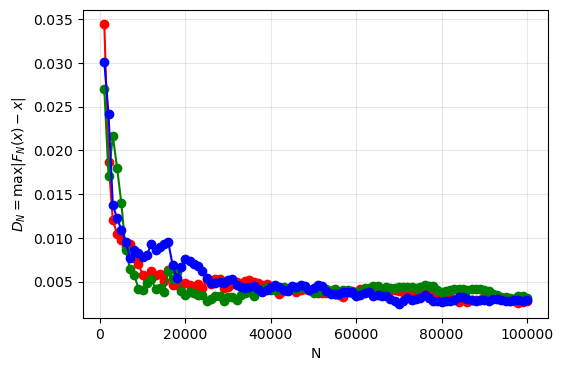}
    \caption{Kolmogorov–Smirnov statistics for fractional parts of $p(n)^{1/k}$ for $k=2,3,4$, and $N=100\,000$.}
        \label{fig:ks stat}
\end{figure}

Figure~\ref{fig:ks stat} shows $D_N$ as a function of $N$ for $k=2,3,4$. We see that $D_N$ decreases as $N$ grows, consistent with equidistribution.

Taken together, the histogram plots, empirical distribution functions, and discrepancy statistics provide strong numerical evidence in support of Conjecture \ref{conj: equidistribution}. We emphasize that these plots are purely experimental and do not constitute proof.

\section{A probabilistic model}\label{sec:model}

Throughout Sections \ref{sec:model} and \ref{sec:heuristics}, all random variables are written in boldface  (e.g. $\mathbf{A}, \mathbf{B}, \mathbf{C}$). We use $\mathbb{P} E$ to denote the probability that some event $E$ occurs, and $\mathbb{E} \mathbf{X}$ to denote the expected value of some random variable $\mathbf{X}$. We shall also use Vinogradov’s notation $f \ll g$ to mean $f = O(g)$. 

\subsection{Setup of the heuristic model}\label{ss:setup}
Here we give a precise description of our probabilistic model, motivated by the evidence for Conjecture~\ref{conj: equidistribution} given in Section~\ref{sec:equidistribution}.  Let $A : \N \to \R^+$ be a positive real-valued function on the natural numbers $\N$. 
We impose the growth condition that $A(n) \to \infty$ as $n \to \infty$, and moreover that $\sum_{n=1}^{\infty} A(n)^{-\frac12}$ converges. To model $f(n)$ we will set $A(n)$ to be the main term of the asymptotic for $f(n)$.

Let $\{\eps_n\}_{n=1}^\infty$ be a sequence of positive real numbers such that $\eps_n \to 0$ as $n \to \infty$.  To control the rate at which $\eps_n$ converges to 0, we also assume that as $n \to \infty$,

\begin{equation}\label{eq:eps_ncondition}
\eps_n \gg \frac{\log A(n)}{\sqrt{A(n)}}.    
\end{equation}

Let $\{\mathbf{f}_n\}_{n=1}^\infty$ be a sequence of discrete random variables defined so that for each $n$, $\mathbf{f}_n$ takes a uniform distribution among the set 
\begin{equation}\label{eq:S_ndef}
\mathcal{S}_n = \{x\in \N \mid A(n)(1 - \eps_n) \leq x \leq A(n) (1 + \eps_n) \}. 
\end{equation}
We note that
\begin{equation}\label{eq:S_nsize}
2\eps_nA(n) -1 \leq \card{\mathcal{S}_n} \leq  2\eps_nA(n) + 1.
\end{equation}

Now we explicitly give the probability mass function (PMF) for each $\mathbf{f}_n$. For any integer $m \in \Z$, we have
\begin{equation*}
    \mathbb{P}( \mathbf{f}_n = m) = \begin{cases}
        \frac{1 }{ \card{\mathcal{S}_n}} &\text{if } m \in \mathcal{S}_n, \\
        0 & \text{otherwise. }
    \end{cases}
\end{equation*}

We note that any possible sequence of outcomes from $\{\mathbf{f}_n\}_{n=1}^\infty$ will yield a function $f : \N \to \N$ such that $f(n) \sim A(n)$ as $n \to \infty$.  Conversely, any such function satisfying $f(n) \sim A(n)$ can arise as a possible sequence of outcomes from $\{\mathbf{f}_n\}_{n=1}^\infty$, for some suitable choice of the errors $\{\eps_n\}_{n=1}^\infty$.

We can now define the random variable
$$\mathbf{\Delta}_{\mathbf{f},k}(n) := \min \big\{  | \mathbf{f}_n- m^k| : m \in \Z  \big\} $$
analogously to $\Delta_{f,k}(n)$.

For a fixed $k \geq 2$, we compute the probability that $\mathbf{f}_n$ is within $d$ of a $k$th power.

\begin{lemma} \label{lem:prob_delta}
    Let $\{\mathbf{f}_n\}_{n=1}^\infty$ be a sequence of random variables as defined in \eqref{eq:S_ndef}.  Fix some integers $k \geq 2$ and $d \geq 0$.  Then there exists some constant $B > 0$ (independent of $k$ and $d$), such that for all $n > B$ we have the upper bound
\begin{equation*}
    \mathbb{P} ( \mathbf{\Delta}_{\mathbf{f},k}(n) \leq d ) \leq  \frac{2d+1}{k A(n)^{1 - \frac{1}{k}}} + \frac{6d+3}{2 \eps_n A(n)} + \frac{(8d+4)\eps_n^2}{A(n)^{1 - 1/k}}.
\end{equation*}
Furthermore, there exists a constant $B_d > 0$ (possibly depending on $d$), such that for all $n > B_d$, we have the lower bound
\begin{equation*}
    \mathbb{P} ( \mathbf{\Delta}_{\mathbf{f},k}(n) \leq d ) \geq \frac{2d+1}{k A(n)^{1 - \frac{1}{k}}} - \frac{10d+3}{2 \eps_n A(n)}.
\end{equation*}

\end{lemma}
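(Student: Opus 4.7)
The plan is to express the probability as a ratio $|T|/|\mathcal{S}_n|$, where
\[
T := \{x \in \mathcal{S}_n : \min_{m \in \Z} |x - m^k| \leq d\} = \bigcup_{m \in \Z} \{m^k - d, \ldots, m^k + d\} \cap \mathcal{S}_n,
\]
and then estimate $|T|$ by counting $k$-th powers in a suitable neighborhood of $A(n)$. The union immediately yields $|T| \leq (2d+1) N_1$ for the upper bound, where $N_1$ counts integers $m$ with $m^k \in [A(n)(1-\eps_n) - d,\, A(n)(1+\eps_n) + d]$. The elementary estimate $N_1 \leq \beta^{1/k} - \alpha^{1/k} + 1$ for the number of $k$-th powers in $[\alpha,\beta]$, after factoring out $A(n)^{1/k}$, reduces the problem to bounding
\[
(1+y)^{1/k} - (1-y)^{1/k}, \quad \text{where } y := \eps_n + d/A(n).
\]

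The key observation is that this difference is an odd function of $y$, so all even-degree Taylor terms vanish and $(1+y)^{1/k} - (1-y)^{1/k} = \frac{2y}{k} + O(y^3)$ for $|y| \leq 1/2$. This gives
\[
N_1 \leq \frac{2\eps_n A(n)^{1/k}}{k} + \frac{2d A(n)^{1/k-1}}{k} + O(A(n)^{1/k} y^3) + 1.
\]
Multiplying by $(2d+1)$ and dividing by $|\mathcal{S}_n| \geq 2\eps_n A(n) - 1$, the three advertised contributions emerge as follows: the main ratio produces $\frac{2d+1}{k A(n)^{1-1/k}}$; the $+1$ rounding error together with the secondary $\frac{2dA(n)^{1/k-1}}{k}$ piece and the denominator correction $(2\eps_n A(n)-1)^{-1} \approx (2\eps_n A(n))^{-1}\bigl(1 + O(1/\eps_n A(n))\bigr)$ collectively contribute $O\!\left((2d+1)/(\eps_n A(n))\right)$, which I would absorb into $\frac{6d+3}{2\eps_n A(n)}$; and the Taylor remainder, after using $y^3 \ll \eps_n^2 y$ in the relevant regime, contributes $O\!\left((2d+1)\eps_n^2/A(n)^{1-1/k}\right)$, yielding the $\frac{(8d+4)\eps_n^2}{A(n)^{1-1/k}}$ term. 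For the edge case where $y > 1/2$ (i.e., $d$ comparable to or larger than $A(n)$), the first two terms of the advertised bound already exceed $1$, so the inequality is trivial and $B$ can be chosen independent of $k$ and $d$.

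For the lower bound, the new ingredient is disjointness: the intervals $\{m^k-d, \ldots, m^k+d\}$ near $A(n)$ must not overlap so that $|T| \geq (2d+1) N_2$, with $N_2$ counting $k$-th powers $m^k$ whose full distance-$d$ neighborhood lies in $\mathcal{S}_n$. Since consecutive $k$-th powers near $A(n)$ are separated by roughly $k A(n)^{1-1/k} \to \infty$, disjointness holds once $n > B_d$, with $B_d$ depending on $d$ through the gap estimate. One then uses $N_2 \geq (A(n)(1+\eps_n)-d)^{1/k} - (A(n)(1-\eps_n)+d)^{1/k} - 1$ together with the same Taylor identity (now applied with $y^{\prime} := \eps_n - d/A(n)$) and division by $|\mathcal{S}_n| \leq 2\eps_n A(n) + 1$ to produce the matching lower bound.

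The main obstacle is the careful error bookkeeping required to reach the precise numerical constants $6d+3$ and $8d+4$, while keeping $B$ independent of $k$ and $d$ in the upper bound. The hypothesis $\eps_n \gg \log A(n)/\sqrt{A(n)}$ from \eqref{eq:eps_ncondition} is essential: it ensures $\eps_n A(n) \to \infty$, which in turn guarantees that $2\eps_n A(n) \pm 1$ is asymptotically equivalent to $2\eps_n A(n)$, so all denominator corrections and rounding errors can be uniformly absorbed into the stated error terms.
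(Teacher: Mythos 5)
Your proposal follows essentially the same route as the paper: express the probability as a count over $\mathcal{S}_n$ divided by $\card{\mathcal{S}_n}$, count $k$th powers via the difference of $1/k$-th roots, use the odd-function/binomial expansion $(1+y)^{1/k}-(1-y)^{1/k}=\tfrac{2y}{k}+O(y^3)$, and for the lower bound invoke disjointness of the distance-$d$ neighborhoods once consecutive $k$th powers near $A(n)$ are more than $2d$ apart (whence $B_d$ depends on $d$). The only differences are in boundary bookkeeping — you count powers in the interval enlarged (resp.\ shrunk) by $d$, which cleanly handles powers lying just outside $\mathcal{S}_n$, while the paper counts powers inside $\mathcal{S}_n$ and corrects by $\pm 2d$ — and your explicit treatment of the regime $d \gtrsim \eps_n A(n)$ (where the stated bound is trivial) is exactly what is needed to keep $B$ independent of $d$ and $k$.
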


\begin{proof}
    Fix some integers $k \geq 2$ and $d \geq 0$. We note that
\begin{equation*}
    \mathbb{P} ( \mathbf{\Delta}_{\mathbf{f},k}(n) \leq d ) 
    = \frac{\card{ \{ x \in \mathcal{S}_n : x \text{ is within $d$ of a $k$th power} \} } }{\card{ \mathcal{S}_n} }.
\end{equation*}
Recall from \eqref{eq:S_nsize} that for each $n$, 
\[
2\eps_nA(n) -1 \leq \card{\mathcal{S}_n} \leq  2\eps_nA(n) + 1.
\]
We now wish to count the number of $k$th powers in the set $\mathcal{S}_n$. For $k \geq 2$ set
\[
N_k(n):=\card {\{m\in \mathcal{S}_n : m = t^k\ \text{for some}\ t\in\mathbb{N}\} }.
\]
The total number of $k$th powers up to $x$ is $\lfloor x^{1/k}\rfloor$. Therefore, this gives the following lower and upper bounds for $N_k(n)$,
\begin{multline*}
(A(n)(1 + \varepsilon_n))^{1/k} -  (A(n)(1 - \varepsilon_n))^{1/k} -1 \leq N_k(n) \leq \\
(A(n)(1 + \varepsilon_n))^{1/k} -  (A(n)(1 - \varepsilon_n))^{1/k}+1.
\end{multline*}

Recall that, for any $x \in \R$ such that $|x| < 1$, we have the generalised binomial expansion
\begin{align*}
    (1 + x)^{1/k} = \sum_{i=0}^{\infty} \binom{1/k}{i} x^i 
\end{align*}
where $\binom{1/k}{i} = \frac{1/k (1/k - 1) \cdots (1/k - i + 1)}{i!}$ is the generalised binomial coefficient. Thus, if $|\eps_n| \leq 1/2$, we have that 
\begin{align*}
    (1 + \eps_n)^{1/k} - ( 1 - \eps_n)^{1/k} = \sum_{i=0}^{\infty} \binom{1/k}{i} \eps_n^i - \sum_{i=0}^{\infty} \binom{1/k}{i} (-\eps_n)^i = \frac{2 \eps_n}{k} + 2 \eps_n^3 \sum_{i=0}^{\infty} \binom{1/k}{2i + 3} \eps_n^{2i}.  
\end{align*}
Since the binomial coefficients satisfy $|\binom{1/k}{i}| \leq 1$ and given that $|\eps_n| \leq 1/2$, this gives us that
\begin{equation*}
    \sum_{i=0}^{\infty} \binom{1/k}{2i + 3} \eps_n^{2i} \leq \sum_{i=0}^{\infty}  \frac{1}{2^{2i}} < 2,
\end{equation*}
which proves that
\begin{equation*}
    \frac{2 \eps_n}{k} < (1 + \eps_n)^{1/k} - ( 1 - \eps_n)^{1/k} < \frac{2 \eps_n}{k} + 4 \eps_n^3 .
\end{equation*}

Thus, for all $n$ such that $|\eps_n| \leq 1/2$, we have the following lower and upper bounds for $N_k(n)$,
\begin{align*}
    \frac{2 \eps_n A(n)^{1/k}}{k} - 1 < 
    N_k(n) <  \frac{2 \eps_n A(n)^{1/k}}{k} +  4 \eps_n^3 A(n)^{1/k} + 1.
\end{align*}
We now note that the number of integers $x \in \mathcal{S}_n$ which are at most $d$ away from a $k$th power is at most $(2d+1) N_k(n)$.  Thus we have
\begin{equation*}
\card { \{ x \in \mathcal{S}_n : x \text{ is within $d$ of a $k$th power } \} } \leq
    (2d+1) N_k(n). 
\end{equation*}
This therefore gives the upper bound
\begin{align*}
    \mathbb{P}( \mathbf{\Delta}_{\mathbf{f},k}(n) \leq d  ) &=
    \frac{\card{ \{ x \in \mathcal{S}_n : x \text{ is within $d$ of a $k$th power} \} } }{\card{ \mathcal{S}_n} } 
    \leq \frac{(2d+1) N_k(n)}{2\eps_nA(n) - 1}
    \\
    &\leq  \frac{(2d+1) (2 \eps_n A(n)^{1/k}/k +  4 \eps_n^3 A(n)^{1/k} + 1 ) }{2 \eps_n A(n)} \cdot \frac{2 \eps_n A(n)}{2 \eps_n A(n) - 1} \\
    &\leq \left(  \frac{(2d+1)}{k A(n)^{1 - 1/k}} + \frac{(2d+1)}{2 \eps_n A(n)} + \frac{(4d+2) \eps_n^2}{A(n)^{1 - 1/k}} \right) \cdot \left(  1 + \frac{1}{2 \eps_n A(n) - 1} \right).
\end{align*}
As $\eps_n A(n) \gg \sqrt{A(n)}$, then for $n$ sufficiently large, we have that $2 \eps_n A(n) - 1 \geq \eps_n A(n)$.  This gives us the upper bound
\begin{align*}
    \mathbb{P}( \mathbf{\Delta}_{\mathbf{f},k}(n) \leq d ) &\leq \frac{(2d+1)}{k A(n)^{1 - 1/k}} + \frac{(2d+1)}{2 \eps_n A(n)} + \frac{(4d+2) \eps_n^2}{A(n)^{1 - 1/k}} \\
    &\quad +  \frac{(2d+1)}{k \eps_n A(n)^{2 - 1/k}} + \frac{(2d+1)}{2 \eps_n^2 A(n)^{2}} + \frac{(4d+2) \eps_n}{A(n)^{2 - 1/k}} .
\end{align*}
If we now choose $n$ sufficiently large such that $|A(n)| > 1$ and $|\eps_n A(n)| > 1$, then we can combine the fourth and fifth terms above into the second term, and the sixth term above into the third term.  This gives us the upper bound
\begin{align*}
    \mathbb{P}( \mathbf{\Delta}_{\mathbf{f},k}(n) \leq d ) \leq \frac{2d+1}{k A(n)^{1 - 1/k}} + \frac{6d+3}{2 \eps_n A(n)} + \frac{(8d+4) \eps_n^2}{A(n)^{1 - 1/k}}
\end{align*}
for all $n$ sufficiently larger than some absolute constant $B$, not depending on $d$ or $k$.

To obtain a lower bound for $\mathbb{P}( \mathbf{\Delta}_{\mathbf{f},k}(n) \leq d )$ we assume that $n$ is sufficiently large such that $\sqrt{A(n)} > 10(2d+1)$ and $\eps_n \leq 1/2$. In particular, this implies that we have
\begin{equation} \label{eq:lower_bound_prob_condition}
    \ceil{ (A(n) (1 - \eps_n ))^{1/k} }^k - \ceil{ (A(n) (1 - \eps_n ))^{1/k} - 1}^k \geq 2d+1.
\end{equation}
This implies that every $k$th power in $\mathcal{S}_n$ will be at least $2d$ away from another $k$th power. This therefore gives the lower bound
\begin{align*}
    (2d+1) N_k(n) - 2d \leq & \card { \{ x \in \mathcal{S}_n : x \text{ is within $d$ of a $k$th power } \} }. 
\end{align*}
Similarly, we thus obtain a lower bound for $\mathbb{P}( \mathbf{\Delta}_{\mathbf{f},k}(n) \leq d  )$,
\begin{align*}
    \mathbb{P}( \mathbf{\Delta}_{\mathbf{f},k}(n) \leq d  ) &=
    \frac{\card{ \{ x \in \mathcal{S}_n : x \text{ is within $d$ of a $k$th power} \} } }{\card{ \mathcal{S}_n} } 
    \geq \frac{(2d+1) N_k(n) - 2d}{2\eps_nA(n) + 1}
    \\
    &\geq  \frac{(2d+1) (2 \eps_n A(n)^{1/k}/k - 1 ) - 2d}{2 \eps_n A(n)} \cdot \frac{2 \eps_n A(n)}{2 \eps_n A(n) + 1} \\
    &\geq \left(  \frac{(2d+1)}{k A(n)^{1 - 1/k}} - \frac{(4d+1)}{2 \eps_n A(n)}  \right) \cdot \left(  1 - \frac{1}{2 \eps_n A(n) + 1} \right).
\end{align*}
Again, as $2 \eps_n A(n) + 1 \geq \eps_n A(n)$, we have that
\begin{align*}
    \mathbb{P}( \mathbf{\Delta}_{\mathbf{f},k}(n) \leq d  ) &\geq \frac{(2d+1)}{k A(n)^{1 - 1/k}} - \frac{(4d+1)}{2 \eps_n A(n)} - \frac{(2d+1)}{k \eps_n A(n)^{2 - 1/k}} - \frac{(4d+1)}{2 \eps_n^2 A(n)^2}.
\end{align*}
By similarly assuming that $|\eps_n A(n)| > 1$ and absorbing the second and fourth terms into the second, we obtain the lower bound
\begin{align*}
    \mathbb{P}( \mathbf{\Delta}_{\mathbf{f},k}(n) \leq d  ) &\geq \frac{2d+1}{k A(n)^{1 - 1/k}} - \frac{10d+3}{2 \eps_n A(n)}
\end{align*}
for all $n$ larger than $B$ and such that condition (\ref{eq:lower_bound_prob_condition}) holds. This proves the lemma.
\end{proof}

By thus considering the asymptotic limit as $n \to \infty$ in Lemma \ref{lem:prob_delta}, we obtain the following corollary.

\begin{cor} \label{cor:prob_delta}
    Let $\{\mathbf{f}_n\}_{n=1}^\infty$ be a sequence of random variables as defined in \eqref{eq:S_ndef}.  Fix some integers $k \geq 2$ and $d \geq 0$. Then
    \begin{equation*}
    \mathbb{P} ( \mathbf{\Delta}_{\mathbf{f},k}(n) \leq d ) 
    = \frac{2d+1}{k A(n)^{1 - \frac{1}{k}}}  + O_d \left( \frac{1}{\eps_n A(n)} + \frac{ \eps_n^2}{A(n)^{ 1 - \frac1k}}  \right)
\end{equation*}
as $n \to \infty$.  In particular, in the case $d = 0$, we get
\begin{equation*}
    \mathbb{P}(\mathbf{f}_n \text{is a $k$th power}) = \frac{1}{k A(n)^{1 - \frac{1}{k}}} + O \left( \frac{1}{\eps_n A(n)} + \frac{ \eps_n^2}{A(n)^{1 - \frac{1}{k}}}  \right)
\end{equation*}
as $n \to \infty$.
\end{cor}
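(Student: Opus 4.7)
The plan is to obtain this corollary as a direct asymptotic rephrasing of Lemma~\ref{lem:prob_delta}. The lemma already supplies matching two-sided bounds for $\mathbb{P}(\mathbf{\Delta}_{\mathbf{f},k}(n) \leq d)$ whose leading terms agree, so the work amounts to checking that both the upper and lower ``remainder'' expressions fit inside the single error term $O_d\bigl(\frac{1}{\eps_n A(n)} + \frac{\eps_n^2}{A(n)^{1-1/k}}\bigr)$ claimed in the corollary.

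First, I would fix $k \geq 2$ and $d \geq 0$ and set $n$ large enough that Lemma~\ref{lem:prob_delta} applies in both directions, i.e.\ $n > \max(B, B_d)$. From the upper bound in the lemma,
\[
\mathbb{P}(\mathbf{\Delta}_{\mathbf{f},k}(n) \leq d) - \frac{2d+1}{kA(n)^{1-1/k}} \;\leq\; \frac{6d+3}{2\eps_n A(n)} + \frac{(8d+4)\eps_n^2}{A(n)^{1-1/k}},
\]
and the right-hand side, for fixed $d$, is plainly $O_d\bigl(\frac{1}{\eps_n A(n)} + \frac{\eps_n^2}{A(n)^{1-1/k}}\bigr)$. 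Symmetrically, the lower bound yields
\[
\mathbb{P}(\mathbf{\Delta}_{\mathbf{f},k}(n) \leq d) - \frac{2d+1}{kA(n)^{1-1/k}} \;\geq\; -\frac{10d+3}{2\eps_n A(n)},
\]
whose right-hand side is $O_d\bigl(\frac{1}{\eps_n A(n)}\bigr)$, hence a fortiori fits inside the same error. Combining the two inequalities gives the claimed asymptotic equality as $n \to \infty$.

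For the $d = 0$ specialization, I would simply note that $\mathbf{\Delta}_{\mathbf{f},k}(n) = 0$ means exactly that $\mathbf{f}_n$ is a perfect $k$th power, and that when $d = 0$ the constants $2d+1$, $6d+3$, $8d+4$, $10d+3$ all collapse to absolute constants. Consequently the ``$O_d$'' improves to an unsubscripted ``$O$,'' giving the second displayed formula.

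I do not expect any genuine obstacle here: the corollary is a routine asymptotic restatement of the lemma. The only small point worth flagging in writing is that the threshold $B_d$ in the lower bound of Lemma~\ref{lem:prob_delta} depends on $d$ (it comes from ensuring that consecutive $k$th powers inside $\mathcal{S}_n$ are separated by more than $2d$, via condition~\eqref{eq:lower_bound_prob_condition}), which is precisely why the implicit constant in the error is allowed to depend on $d$; this is harmless since $k$ and $d$ are fixed while $n \to \infty$.
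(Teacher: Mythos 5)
Your proposal is correct and matches the paper exactly: the paper derives this corollary simply by "considering the asymptotic limit as $n \to \infty$ in Lemma~\ref{lem:prob_delta}," which is precisely the two-sided combination of the lemma's bounds that you carry out. Your additional remark about the $d$-dependence of the threshold $B_d$ and the collapse of the constants when $d=0$ is accurate and, if anything, more explicit than the paper's own treatment.
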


We can also compute the probability that $\mathbf{f}_n$ is within $d$ of any perfect power.

\begin{lemma} \label{lem:prob_delta2}
    Let $\{\mathbf{f}_n\}_{n=1}^\infty$ be a sequence of random variables as defined in \eqref{eq:S_ndef}.  Fix some integer $d \geq 0$. Then there exists some constant $A_d > 0$ (possibly depending on $d$) such that, for all $n > A_d$, we have the upper bound
    \begin{multline} \label{eq:upper_delta_2}
        \mathbb{P} ( \mathbf{\Delta}_{\mathbf{f},k}(n) \leq d \text{ for some } k \geq 2) \\ \leq \frac{2d+1}{2 \sqrt{A(n)}} +  \frac{2d+1}{A(n)^{2/3}} + \frac{(8d+5) \eps_n^2}{\sqrt{A(n)}} + \frac{(6d+3) \log_2 A(n)}{2 \eps_n A(n)}.
    \end{multline}
    Furthermore, there exists a constant $B_d > 0$ (possibly depending on $d$), such that for all $n > B_d$, we have the lower bound
    \begin{equation*}
        \mathbb{P} ( \mathbf{\Delta}_{\mathbf{f},k}(n) \leq d \text{ for some } k \geq 2) \geq \frac{2d+1}{2 \sqrt{A(n)}} - \frac{10d+3}{2 \eps_n A(n)} .
    \end{equation*}
\end{lemma}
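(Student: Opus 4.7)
The plan is to obtain the upper bound via a union bound truncated at $k$ of size roughly $\log_2 A(n)$, and to obtain the lower bound by simply restricting to $k=2$ and applying the lower half of Lemma~\ref{lem:prob_delta}.

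For the upper bound, I would first argue that only finitely many values of $k$ can contribute. If $k$th powers $m^k$ with $m \geq 2$ are to land within distance $d$ of $\mathcal{S}_n$ we must have $2^k \leq A(n)(1+\eps_n)+d$, while the trivial $k$th power $1^k = 1$ is excluded for $n$ sufficiently large (depending on $d$) by the growth $A(n)(1-\eps_n) > 1+d$. Hence for such $n$, setting $K := \lfloor \log_2(A(n)(1+\eps_n)+d) \rfloor$, we may write
\[
\mathbb{P}\!\left( \mathbf{\Delta}_{\mathbf{f},k}(n) \leq d \text{ for some } k \geq 2 \right) \leq \sum_{k=2}^{K} \mathbb{P}\!\left( \mathbf{\Delta}_{\mathbf{f},k}(n) \leq d \right),
\]
and note $K \leq \log_2 A(n) + O_d(1)$ for large $n$.

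Next I substitute the upper bound from Lemma~\ref{lem:prob_delta} into each summand and handle the three error pieces separately. The leading $\frac{2d+1}{kA(n)^{1-1/k}}$ is dominated by $k=2$, which gives $\frac{2d+1}{2\sqrt{A(n)}}$; for the tail $k \geq 3$ I use $A(n)^{1-1/k} \geq A(n)^{2/3}$, so the tail is bounded by $\frac{(2d+1)\log K}{A(n)^{2/3}}$, which is at most $\frac{2d+1}{A(n)^{2/3}}$ for $n$ large enough that $\log K \leq 1$ is absorbed by a comparable factor, or more cleanly by simply absorbing the logarithmic factor. The middle piece $\frac{6d+3}{2\eps_n A(n)}$ does not depend on $k$, so summing over $k \leq K$ gives at most $\frac{(6d+3)\log_2 A(n)}{2\eps_n A(n)}$ after absorbing the $O_d(1)$ slack into the $\log_2 A(n)$ factor. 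For the final piece $\frac{(8d+4)\eps_n^2}{A(n)^{1-1/k}}$, the $k=2$ term already contributes $\frac{(8d+4)\eps_n^2}{\sqrt{A(n)}}$, and the tail $k \geq 3$ is at most $(8d+4)\eps_n^2 K A(n)^{-2/3}$, which is negligible compared to $\eps_n^2 / \sqrt{A(n)}$ whenever $K \leq A(n)^{1/6}$, and hence can be absorbed into the constant to produce the slightly larger coefficient $8d+5$ for all sufficiently large $n$. Summing these three contributions gives exactly the right-hand side of the claimed upper bound \eqref{eq:upper_delta_2}.

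The lower bound is immediate: the event $\{\mathbf{\Delta}_{\mathbf{f},2}(n) \leq d\}$ is contained in the union over $k \geq 2$, so
\[
\mathbb{P}\!\left( \mathbf{\Delta}_{\mathbf{f},k}(n) \leq d \text{ for some } k \geq 2 \right) \geq \mathbb{P}\!\left( \mathbf{\Delta}_{\mathbf{f},2}(n) \leq d \right),
\]
and the lower bound of Lemma~\ref{lem:prob_delta} with $k = 2$ yields precisely $\frac{2d+1}{2\sqrt{A(n)}} - \frac{10d+3}{2\eps_n A(n)}$ once $n$ exceeds the $d$-dependent constant $B_d$ in that lemma.

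The main obstacle is the careful bookkeeping in the tail of the union bound: one must ensure that every residual $\log K$ factor and every $k$-dependent error, summed from $k=3$ up to $K \asymp \log_2 A(n)$, is genuinely absorbed into the three terms stated on the right-hand side rather than contributing additional logarithmic loss. This relies on the gap $1 - 1/k \geq 2/3$ for $k \geq 3$ being enough to outrun a factor of $\log\log A(n)$, which in turn requires $n$ to be sufficiently large in a way that depends on $d$ — hence the $d$-dependent threshold $A_d$.
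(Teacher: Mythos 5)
Your overall strategy is the same as the paper's: truncate the union bound at $k\leq K\asymp\log_2 A(n)$ using $2^k\leq A(n)(1+\eps_n)+d$ and $A(n)(1-\eps_n)>1+d$, feed in the upper bound from Lemma~\ref{lem:prob_delta} term by term, and get the lower bound by restricting to $k=2$; the lower-bound half and the treatment of the $\frac{6d+3}{2\eps_n A(n)}$ and $\eps_n^2$ pieces match the paper essentially verbatim. The one step whose justification is wrong as written is the tail of the leading term: bounding all of $k\geq 3$ by $A(n)^{1-1/k}\geq A(n)^{2/3}$ gives $\frac{(2d+1)\log K}{A(n)^{2/3}}$ with $\log K\to\infty$, and this is \emph{not} eventually $\leq\frac{2d+1}{A(n)^{2/3}}$; nor can a divergent factor be ``absorbed'' at the same exponent, which is also why your closing remark that $1-1/k\geq 2/3$ for $k\geq 3$ ``outruns'' the logarithm cannot work --- at exponent exactly $2/3$ there is nothing to outrun it with. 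The repair is what the paper does: peel off $k=3$ separately, contributing $\frac{2d+1}{3A(n)^{2/3}}$, and bound the remaining tail $k\geq 4$ using $A(n)^{1-1/k}\geq A(n)^{3/4}$, so it is $O_d\bigl(\log_2 A(n)\cdot A(n)^{-3/4}\bigr)=o\bigl(A(n)^{-2/3}\bigr)$ and fits into the factor-of-$3$ slack between $\frac{2d+1}{3A(n)^{2/3}}$ and the stated $\frac{2d+1}{A(n)^{2/3}}$ once $n$ exceeds a $d$-dependent threshold (the paper uses $A(n)^{1/12}>\log_2(A(n)+d)$). With that single correction your argument coincides with the paper's proof.
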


\begin{proof}
    To show an upper bound, we can apply the following union bound,
    \begin{align*}
        \mathbb{P} ( \mathbf{\Delta}_{\mathbf{f},k}(n) \leq d \text{ for some } k \geq 2) \leq \sum_{k=2}^{\infty}  \mathbb{P} ( \mathbf{\Delta}_{\mathbf{f},k}(n) \leq d).
    \end{align*}

    Now as $\mathbf{f}_n$ attains a value between $A(n) ( 1 - \eps_n)$ and $A(n)(1 + \eps_n)$, then if $A(n) ( 1 - \eps_n) > 1 + d$ and $A(n) ( 1 + \eps_n) < 2^k - d$, then $\mathbf{f}_n$ cannot be within $d$ to a $k$th power and thus $ \mathbb{P} ( \mathbf{\Delta}_{\mathbf{f},k}(n) \leq d) = 0$.  Therefore, assuming $n$ sufficiently large such that $|\eps_n| \leq 1/2$ and $A(n)/2 > 1 + d$, then we have
    \begin{multline*}
        \mathbb{P} ( \mathbf{\Delta}_{\mathbf{f},k}(n) \leq d \text{ for some } k \geq 2) \leq \sum_{k=2}^{\floor{\log_2 ( 2 A(n) + d )} }  \mathbb{P} ( \mathbf{\Delta}_{\mathbf{f},k}(n) \leq d) \\
        \leq  \sum_{k=2}^{\floor{\log_2 (A(n) + d )} + 1 } \left( \frac{2d+1}{k A(n)^{1 - \frac{1}{k}}} + \frac{6d+3}{2 \eps_n A(n)} + \frac{(8d+4)\eps_n^2}{A(n)^{1 - 1/k}} \right) .
    \end{multline*}
    By separating out the small terms for $k = 2$ and $k = 3$, we obtain the lower bound
    \begin{align*}
         \mathbb{P} ( \mathbf{\Delta}_{\mathbf{f},k}(n) \leq d &\text{ for some } k \geq 2) \leq \frac{2d+1}{2 \sqrt{A(n)}} +  \frac{2d+1}{3 A(n)^{2/3}} +  \frac{(2d+1) \log_2 (A(n) + d ) }{4 A(n)^{3/4}}  \\
         &\quad + \frac{(8d+4) \eps_n^2}{\sqrt{A(n)}} + \frac{(8d+4) \log_2 (A(n) + d ) \eps_n^2}{A(n)^{2/3}}  + \frac{(6d+3) \log_2 (A(n) + d )}{2 \eps_n A(n)} .
    \end{align*}
    By taking $n$ large enough so that $A(n)^{1/12} > \log_2 (A(n) + d )$, we can give an upper bound of
    \begin{align*}
         \mathbb{P} ( \mathbf{\Delta}_{\mathbf{f},k}(n) \leq d \text{ for some } k \geq 2) &\leq \frac{2d+1}{2 \sqrt{A(n)}} +  \frac{2d+1}{A(n)^{2/3}} + \frac{(8d+5) \eps_n^2}{\sqrt{A(n)}} + \frac{(6d+3) \log_2 A(n)}{2 \eps_n A(n)}.
    \end{align*}
    To show the lower bound, we can simply consider the case for $k = 2$ and apply the lower bound from Lemma~\ref{lem:prob_delta},
    \begin{align*}
        \mathbb{P} ( \mathbf{\Delta}_{\mathbf{f},k}(n) \leq d \text{ for some } k \geq 2) \geq \mathbb{P} ( \mathbf{\Delta}_{\mathbf{f},2}(n) \leq d) 
        &\geq \frac{2d+1}{2 \sqrt{A(n)}} - \frac{10d+3}{2 \eps_n A(n)}.
    \end{align*}
\end{proof}

\noindent \textbf{Remark.}  One can also similarly prove bounds for $\mathbb{P} ( \mathbf{\Delta}_{\mathbf{f},k}(n) \leq d \text{ for some } k \geq 2)$ by noting that
\begin{equation*}
        \mathbb{P}( \mathbf{\Delta}_{\mathbf{f},k}(n) \leq d \text{ for some } k \geq 2 ) = \frac{\card{ \{ x \in \mathcal{S}_n : x \text{ is within $d$ of a perfect power} \} } }{\card{ \mathcal{S}_n} }
\end{equation*}
and then obtaining estimate on the size of the numerator by using results of Nyblom \cite{Nyblom, Nyblom2008} and Jakimczuk  \cite{Jakimczuk} to count the number of perfect powers in the set $\mathcal{S}_n$.

By again considering the asymptotic limit as $n \to \infty$ in Lemma~\ref{lem:prob_delta2}, we also obtain the following corollary.

\begin{cor} \label{cor:prob_delta2}
    Let $\{\mathbf{f}_n\}_{n=1}^\infty$ be a sequence of random variables as defined in \eqref{eq:S_ndef}.  Fix some integer $d \geq 0$. Then
    \begin{equation*}
        \mathbb{P} ( \mathbf{\Delta}_{\mathbf{f},k}(n) \leq d \text{ for some } k \geq 2) = \frac{2d+1 }{2 \sqrt{A(n)}} + O_d \left( \frac{\eps_n^2}{\sqrt{A(n)}} + \frac{1}{A(n)^{2/3}} + \frac{\log A(n)}{\eps_n A(n)} \right)
    \end{equation*}
    as $n \to \infty$. In particular, in the case $d = 0$, we get
    \begin{equation*}
        \mathbb{P}( \mathbf{f}_n \text{ is a perfect power}) = \frac{1}{2\sqrt{A(n)}} + O \left( \frac{\eps_n^2}{\sqrt{A(n)}} + \frac{1}{A(n)^{2/3}} + \frac{\log A(n)}{\eps_n A(n)} \right).
    \end{equation*}
\end{cor}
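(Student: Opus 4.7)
The plan is to derive Corollary~\ref{cor:prob_delta2} directly from Lemma~\ref{lem:prob_delta2} as a bookkeeping exercise: the upper and lower bounds established there already share the same leading term $\frac{2d+1}{2\sqrt{A(n)}}$, so the entire content of the corollary is to verify that every remaining term fits inside the claimed $O_d$ expression.

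First I would fix $d \geq 0$ and take $n$ larger than both $A_d$ and $B_d$ so that both bounds of Lemma~\ref{lem:prob_delta2} hold simultaneously. Subtracting the common leading term shows that the quantity
\[
\left| \mathbb{P}(\mathbf{\Delta}_{\mathbf{f},k}(n) \leq d \text{ for some } k \geq 2) - \frac{2d+1}{2\sqrt{A(n)}} \right|
\]
is bounded by the maximum of $\frac{10d+3}{2\eps_n A(n)}$ (from the lower side) and the sum $\frac{2d+1}{A(n)^{2/3}} + \frac{(8d+5)\eps_n^2}{\sqrt{A(n)}} + \frac{(6d+3)\log_2 A(n)}{2\eps_n A(n)}$ (from the upper side).

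Next I would absorb each of these four contributions into the three listed error terms, treating $d$ as a fixed constant so that all $d$-dependent prefactors disappear into the $O_d$ symbol. Specifically, the first upper term contributes $O_d(A(n)^{-2/3})$, the second contributes $O_d(\eps_n^2/\sqrt{A(n)})$, and the third contributes $O_d(\log A(n)/(\eps_n A(n)))$; the lower-side error $\frac{10d+3}{2\eps_n A(n)}$ is dominated by the same third bucket, using that $\log A(n) \to \infty$. For the specialization to $d = 0$, I would simply note that $\mathbf{\Delta}_{\mathbf{f},k}(n) = 0$ for some $k \geq 2$ is precisely the event that $\mathbf{f}_n$ is a perfect power, and that the prefactors $(2d+1), (8d+5), (6d+3), (10d+3)$ all collapse to absolute constants, turning $O_d$ into an unqualified $O$.

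There is no genuine obstacle here: the hypothesis \eqref{eq:eps_ncondition} guarantees that $\eps_n A(n) \to \infty$ and that the auxiliary smallness conditions invoked in the proof of Lemma~\ref{lem:prob_delta2} (for example $|\eps_n| \leq 1/2$, $|\eps_n A(n)| > 1$, and $A(n)^{1/12} > \log_2 A(n)$) eventually all hold, so the matching argument is valid for all sufficiently large $n$. The only mild care required is to check that the lower-bound error really is dominated by the upper-bound error, which is automatic once $\log A(n) \geq 1$.
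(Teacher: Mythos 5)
Your proposal is correct and follows exactly the route the paper intends: the corollary is stated as an immediate consequence of Lemma~\ref{lem:prob_delta2} (``by considering the asymptotic limit as $n \to \infty$''), and your bookkeeping — matching the common leading term, absorbing the upper-bound error terms into the three $O_d$ buckets, and dominating the lower-bound error $\frac{10d+3}{2\eps_n A(n)}$ by $\frac{\log A(n)}{\eps_n A(n)}$ once $\log A(n) \geq 1$ — is precisely the verification the paper leaves implicit. The $d=0$ specialization is also handled correctly.
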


\section{Heuristics}\label{sec:heuristics}

In this section we use the probabilistic model given in Section \ref{sec:model} to prove general heuristic results which give the parts of Theorem \ref{thm:introheuristic} as special cases.  As discussed in Remark \ref{rem:thm1.3}, these provide heuristic proofs for generalizations of  Conjectures~\ref{conj:MOT1}--\ref{conj:MOT3&4}, as well as heuristic support for Sun's conjecture (Conj.~\ref{conj:sun}), and possible generalizations. 

When $\{\mathbf{f}_n\}_{n=1}^\infty$ is a sequence of random variables as defined in \eqref{eq:S_ndef}, define the random variable
$$ \mathbf{M}_{\mathbf{f}, k}(d) := \max \{ n \in \N :  \mathbf{\Delta}_{\mathbf{f}, k}(n) \leq d \}, $$
allowing $\infty$ to be a possible outcome.  The following result proves part (a) of Theorem \ref{thm:introheuristic} by setting $\mathbf{f}_n=\mathbf{p}_n$.

\begin{lemma}\label{lem:Mkdprob}
Fix $d \geq 0$ and $k \geq 2$. For $\{\mathbf{f}_n\}_{n=1}^\infty$ defined as in \eqref{eq:S_ndef}, we have that $\mathbf{M}_{\mathbf{f}, k}(d)$ is finite with probability 1. 
\end{lemma}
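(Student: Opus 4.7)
The plan is to apply the first Borel--Cantelli lemma to the events $E_n := \{ \mathbf{\Delta}_{\mathbf{f},k}(n) \leq d \}$. Since $\mathbf{M}_{\mathbf{f},k}(d) = \infty$ precisely when $E_n$ occurs for infinitely many $n$, it suffices to establish that $\sum_{n=1}^{\infty} \mathbb{P}(E_n) < \infty$; Borel--Cantelli then guarantees that almost surely only finitely many $E_n$ occur, so the maximum defining $\mathbf{M}_{\mathbf{f},k}(d)$ is finite with probability 1. Note that no independence assumption on $\{\mathbf{f}_n\}$ is needed for this direction of Borel--Cantelli.

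To verify summability, I would invoke the upper bound from Lemma~\ref{lem:prob_delta}: for all $n$ larger than some absolute constant $B$,
\begin{equation*}
    \mathbb{P}(E_n) \leq  \frac{2d+1}{k A(n)^{1 - 1/k}} + \frac{6d+3}{2 \eps_n A(n)} + \frac{(8d+4)\eps_n^2}{A(n)^{1 - 1/k}}.
\end{equation*}
I would then bound each of the three terms individually by a multiple of $A(n)^{-1/2}$. For the first term, since $k \geq 2$ gives $1 - 1/k \geq 1/2$, we have $A(n)^{-(1-1/k)} \leq A(n)^{-1/2}$ (recalling $A(n) \to \infty$). For the third term, the same comparison applies, and the extra factor $\eps_n^2$ is bounded since $\eps_n \to 0$. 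For the middle term, the growth assumption~\eqref{eq:eps_ncondition} on $\eps_n$ gives
\begin{equation*}
    \frac{1}{\eps_n A(n)} \ll \frac{1}{\sqrt{A(n)}\, \log A(n)} \leq \frac{1}{\sqrt{A(n)}}
\end{equation*}
for all sufficiently large $n$. Thus $\mathbb{P}(E_n) \ll_{d,k} A(n)^{-1/2}$, and by the standing assumption (Section~\ref{ss:setup}) that $\sum_{n=1}^{\infty} A(n)^{-1/2}$ converges, we conclude $\sum_n \mathbb{P}(E_n) < \infty$.

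There is no substantive obstacle here: the convergence hypothesis on $\sum A(n)^{-1/2}$ and the lower bound condition~\eqref{eq:eps_ncondition} on $\eps_n$ were designed precisely to make the Borel--Cantelli argument go through. The only mild care needed is verifying that each error term in Lemma~\ref{lem:prob_delta} is dominated by the summable sequence $A(n)^{-1/2}$; this is a straightforward comparison using $k \geq 2$ and the hypothesis on $\eps_n$.
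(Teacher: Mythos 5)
Your proposal is correct and follows essentially the same route as the paper: a first Borel--Cantelli argument applied to the events $\{\mathbf{\Delta}_{\mathbf{f},k}(n) \leq d\}$, with the probability bound from Lemma~\ref{lem:prob_delta} (the paper cites the equivalent Corollary~\ref{cor:prob_delta}) reduced to $\ll A(n)^{-1/2}$ via $1 - 1/k \geq 1/2$ and the hypothesis \eqref{eq:eps_ncondition}, and convergence supplied by the standing assumption on $\sum A(n)^{-1/2}$. No issues.
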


\begin{proof}
Fix $ d \geq 0$ and $k \geq 2$. From \eqref{eq:eps_ncondition} we observe that $\epsilon_n A(n) \gg \sqrt{A(n)} \log A(n)$. Then using Corollary~\ref{cor:prob_delta}, we have as $n \to \infty$,
\begin{align*}
    \mathbb{P}( \mathbf{\Delta}_{\mathbf{f},k}(n) \leq d  )
    &\ll \frac{1}{\sqrt{A(n)}}.
\end{align*}
Therefore, from the assumption in \S \ref{ss:setup} that  $\sum A(n)^{-1/2}$ converges, we have that $\sum_{n=1}^{\infty} \mathbb{P}( \mathbf{\Delta}_{\mathbf{f},k}(n) \leq d  )$ converges.  By the Borel-Cantelli Lemma, $\mathbf{M}_{\mathbf{f},k}(d)$ is thus finite with probability 1.
\end{proof}

\noindent Letting $d = 0$ in Lemma \ref{lem:Mkdprob}, we obtain that $\mathbf{M}_{\mathbf{f},k}(0)$ is finite with probability 1.  This yields the following corollary which proves part (b) of Theorem \ref{thm:introheuristic} by setting $\mathbf{f}_n=\mathbf{p}_n$.
\begin{cor}
    If $k \geq 2$, then $\{\mathbf{f}_n\}_{n=1}^\infty$ as defined in \eqref{eq:S_ndef} will take only finitely many values that are perfect $k$th powers with probability 1.
\end{cor}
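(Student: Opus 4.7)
The plan is to deduce the corollary directly from Lemma \ref{lem:Mkdprob} by specializing to $d = 0$. The first step is to unpack the meaning of $\mathbf{\Delta}_{\mathbf{f},k}(n) \leq 0$: since $\mathbf{\Delta}_{\mathbf{f},k}(n) = \min\{|\mathbf{f}_n - m^k| : m \in \Z\}$ is a non-negative integer-valued random variable, this inequality holds if and only if $\mathbf{f}_n = m^k$ for some $m \in \Z$. Consequently, the random variable $\mathbf{M}_{\mathbf{f},k}(0)$ coincides with $\max\{n \in \N : \mathbf{f}_n \text{ is a perfect } k\text{th power}\}$ (with the convention that an empty maximum is $0$ and an unbounded one is $\infty$).

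Next, I would invoke Lemma \ref{lem:Mkdprob} with $d = 0$ to conclude that $\mathbf{M}_{\mathbf{f},k}(0) < \infty$ with probability $1$. On this almost-sure event, the set $\{n \in \N : \mathbf{f}_n \text{ is a perfect } k\text{th power}\}$ is contained in $\{1, 2, \ldots, \mathbf{M}_{\mathbf{f},k}(0)\}$ and is therefore finite, which is precisely the claim. Since $k$ is fixed at the outset, no countable union over $k$ is required and the almost-sure conclusion is immediate. There is essentially no analytic obstacle here beyond what was already resolved in Lemma \ref{lem:Mkdprob} via Borel--Cantelli applied to the summable upper bound $\mathbb{P}(\mathbf{\Delta}_{\mathbf{f},k}(n) \leq 0) \ll A(n)^{-1/2}$ supplied by Corollary \ref{cor:prob_delta}; the only genuinely new content of the corollary is the translation from ``finiteness of the maximal index'' to ``finiteness of the set of $k$th-power values.''
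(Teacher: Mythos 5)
Your proposal is correct and follows exactly the paper's route: the paper likewise obtains the corollary by setting $d=0$ in Lemma \ref{lem:Mkdprob} (whose proof rests on Corollary \ref{cor:prob_delta} and Borel--Cantelli) and observing that finiteness of $\mathbf{M}_{\mathbf{f},k}(0)$ means only finitely many $n$ have $\mathbf{\Delta}_{\mathbf{f},k}(n)=0$, i.e.\ only finitely many $\mathbf{f}_n$ are perfect $k$th powers. Your explicit unpacking of the equivalence between $\mathbf{\Delta}_{\mathbf{f},k}(n)\le 0$ and $\mathbf{f}_n$ being a $k$th power is exactly the (implicit) content of the paper's one-line deduction.
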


In order to consider all perfect powers instead of only $k$th powers, for $\{\mathbf{f}_n\}_{n=1}^\infty$ as defined in \eqref{eq:S_ndef}, we define the random variable
\begin{equation}\label{eq:RVMdef}
\mathbf{M}_{\mathbf{f}}(d) := \max \{ n :  \mathbf{\Delta}_{\mathbf{f},k}(n) \leq d \text{ for some } k \geq 2\},
\end{equation}
allowing $\infty$ to be a possible outcome.

\begin{lemma}\label{lem:Mdprob}
For $\{\mathbf{f}_n\}_{n=1}^\infty$ as defined in \eqref{eq:S_ndef},
we have that $\mathbf{M}_{\mathbf{f}}(d)$ is finite for all $d \geq 0$ with probability 1. 
\end{lemma}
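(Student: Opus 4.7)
The plan is to mimic the proof of Lemma~\ref{lem:Mkdprob} almost verbatim, but using Lemma~\ref{lem:prob_delta2} (or Corollary~\ref{cor:prob_delta2}) in place of Lemma~\ref{lem:prob_delta}. Concretely, I will apply the first Borel--Cantelli lemma to the events $E_n = \{ \mathbf{\Delta}_{\mathbf{f},k}(n) \leq d \text{ for some } k \geq 2\}$; no independence is required for this direction, so I just need to verify that $\sum_{n=1}^\infty \mathbb{P}(E_n) < \infty$.

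The first step is to invoke Corollary~\ref{cor:prob_delta2}, which gives
\begin{equation*}
\mathbb{P}(E_n) = \frac{2d+1}{2\sqrt{A(n)}} + O_d\!\left( \frac{\eps_n^2}{\sqrt{A(n)}} + \frac{1}{A(n)^{2/3}} + \frac{\log A(n)}{\eps_n A(n)} \right)
\end{equation*}
for all sufficiently large $n$. The second step is to check that each of the four summands above is $O_d(A(n)^{-1/2})$, so that $\mathbb{P}(E_n) \ll_d A(n)^{-1/2}$. For the leading term this is immediate. For the first error term, $\eps_n \to 0$ gives $\eps_n^2/\sqrt{A(n)} \leq 1/\sqrt{A(n)}$ for large $n$. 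For the second error term, since $A(n) \to \infty$ we have $A(n)^{-2/3} \leq A(n)^{-1/2}$ for large $n$. For the third error term, the hypothesis \eqref{eq:eps_ncondition} gives $\eps_n^{-1} \ll \sqrt{A(n)}/\log A(n)$, whence $\log A(n)/(\eps_n A(n)) \ll 1/\sqrt{A(n)}$.

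The third step is to invoke the standing assumption from Section~\ref{ss:setup} that $\sum_{n=1}^\infty A(n)^{-1/2}$ converges. Combining this with the bound $\mathbb{P}(E_n) \ll_d A(n)^{-1/2}$ yields $\sum_{n=1}^\infty \mathbb{P}(E_n) < \infty$. By the Borel--Cantelli lemma, with probability $1$ only finitely many of the events $E_n$ occur, which is precisely the statement that $\mathbf{M}_{\mathbf{f}}(d)$ as defined in \eqref{eq:RVMdef} is finite with probability $1$.

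There is essentially no main obstacle: the technical heart of the argument has already been done in Lemma~\ref{lem:prob_delta2}, whose careful accounting of the union bound over $k$ up to $\log_2 A(n)$ absorbs the ``many perfect powers'' difficulty into the logarithmic factor in the error term. The only mild point requiring attention is verifying that the growth condition \eqref{eq:eps_ncondition} is exactly strong enough to kill this extra logarithm and keep the summand at the critical size $A(n)^{-1/2}$, which matches the convergence hypothesis on $A$; this is what makes the same bound on the summability threshold work uniformly over all $k \geq 2$.
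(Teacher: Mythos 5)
Your proposal is correct and follows essentially the same route as the paper: both apply the upper bound from Lemma~\ref{lem:prob_delta2} (equivalently Corollary~\ref{cor:prob_delta2}), use the growth condition \eqref{eq:eps_ncondition} to absorb the $\log A(n)/(\eps_n A(n))$ term into $O(A(n)^{-1/2})$, and conclude via the convergence of $\sum A(n)^{-1/2}$ and the first Borel--Cantelli lemma. Your version is merely slightly more explicit in checking each error term against the threshold $A(n)^{-1/2}$.
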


\begin{proof}
Fix some integer $d \geq 0$. By Lemma~\ref{lem:prob_delta2}, we note that by using the lower bound on $\eps_n$, as $n \to \infty$,
\begin{equation*}
    \mathbb{P} ( \mathbf{\Delta}_{\mathbf{f},k}(n) \leq d \text{ for some } k \geq 2) 
    \ll \frac{1}{\sqrt{A(n)}} + \frac{\log A(n)}{\eps_n A(n)} \ll \frac{1}{\sqrt{A(n)}}.
\end{equation*}
As in the proof of Lemma \ref{lem:Mkdprob}, using that $A(n)\rightarrow \infty$, we have that $$\sum_{n=1}^{\infty} \mathbb{P}( \mathbf{\Delta}_{\mathbf{f},k}(n) \leq d \text{ for some } k \geq 2  )$$ converges. By the Borel-Cantelli Lemma, $\mathbf{M}_{\mathbf{f}}(d)$ is finite with probability 1.
\end{proof}

\noindent Setting $d = 0$ in Lemma \ref{lem:Mdprob}, we obtain the following corollary.

\begin{cor} \label{cor:finitely_many_powers}
    Any $\{\mathbf{f}_n\}_{n=1}^\infty$ as defined in \eqref{eq:S_ndef}  
    will take only finitely many values that are perfect powers with probability 1.
\end{cor}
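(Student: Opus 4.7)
The plan is to deduce Corollary~\ref{cor:finitely_many_powers} directly from Lemma~\ref{lem:Mdprob} by specializing to $d=0$ and translating the definition into the language of perfect powers. First I would observe that an integer $m \in \Z$ is a perfect power (of some exponent $k \geq 2$) if and only if $|m - t^k| = 0$ for some integer $t$ and some $k \geq 2$. Applied to the outcome of $\mathbf{f}_n$, this is precisely the condition $\mathbf{\Delta}_{\mathbf{f},k}(n) = 0$ for some $k \geq 2$, which coincides with $\mathbf{\Delta}_{\mathbf{f},k}(n) \leq 0$ for some $k \geq 2$ since $\mathbf{\Delta}_{\mathbf{f},k}(n) \geq 0$ by definition.

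Next I would unpack the definition \eqref{eq:RVMdef}, which gives
\begin{equation*}
\mathbf{M}_{\mathbf{f}}(0) = \max \{ n \in \N : \mathbf{\Delta}_{\mathbf{f},k}(n) \leq 0 \text{ for some } k \geq 2 \} = \max \{ n \in \N : \mathbf{f}_n \text{ is a perfect power} \},
\end{equation*}
where we allow $\infty$ as a possible value. Hence the set $T := \{ n \in \N : \mathbf{f}_n \text{ is a perfect power}\}$ is finite if and only if $\mathbf{M}_{\mathbf{f}}(0) < \infty$ (noting that if $T$ were empty the maximum would be taken to be $0$ by convention, which is also finite).

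Finally, I would apply Lemma~\ref{lem:Mdprob} with $d=0$ to conclude that $\mathbf{M}_{\mathbf{f}}(0)$ is finite with probability 1, which by the previous step is exactly the statement that $T$ is finite with probability 1. There is no real obstacle here: the content of the corollary is just a re-interpretation of Lemma~\ref{lem:Mdprob}, since the analytic input (the convergence of $\sum_n \mathbb{P}(\mathbf{\Delta}_{\mathbf{f},k}(n) \leq 0 \text{ for some } k \geq 2)$, obtained from Lemma~\ref{lem:prob_delta2} and the hypothesis $\sum A(n)^{-1/2} < \infty$) and the Borel--Cantelli argument have already been carried out. The only small point worth flagging explicitly is the equivalence in the first paragraph, so that the reader sees immediately why $d=0$ in \eqref{eq:RVMdef} captures the perfect power condition.
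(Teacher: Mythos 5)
Your proposal is correct and follows exactly the paper's route: the corollary is obtained by setting $d=0$ in Lemma~\ref{lem:Mdprob}, and your unpacking of why $\mathbf{M}_{\mathbf{f}}(0)<\infty$ is equivalent to finiteness of the set of perfect-power values is just making explicit what the paper leaves implicit. No issues.
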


We note that this motivates the expectation that many partition functions, while perhaps not directly satisfying an analogue of Sun's conjecture, as stated in Conjecture \ref{conj:sun}, should at at least have only finitely many exceptions. In \S \ref{ss:expectationforpp} we compute the expected number of perfect power values taken by several partition counting functions.

\subsection{Functions asymptotic to $an^{-b} \exp{(cn^\beta)}$}

We now restrict our attention to functions $f(n)$ which satisfy the asymptotic
\begin{equation}\label{eq:asymptotic_shape}
f(n) \sim an^{-b}\exp{(cn^\beta)}
\end{equation}
as $n \to \infty$, for some choice of positive real $a,b,c,\beta$. We note that many variations of partition functions $f(n)$ are known to satisfy (\ref{eq:asymptotic_shape}).

For example, let $\overline{p}(n)$ denote the number of overpartitions of $n$, i.e., the number of partitions of $n$ in which the first occurrence of each part may or may not be overlined.  Let $q(n)$ denote the number of partitions of $n$ into distinct parts, let $sc(n)$ denote the number of partitions into distinct odd parts (i.e., self-conjugate partitions), and let $r(n)$ denote the number of non-unitary partitions of $n$ (i.e. excluding the part 1).  Let $\text{pl}(n)$ denote the number of plane (planar) partitions of $n$.  Then Table~\ref{tab:asymptotics} shows the values of $a,b,c,\beta$ such that these functions are known to grow asymptotically as in \eqref{eq:asymptotic_shape}, where the constant $A_{\text{pl}}$ is given by (e.g. see \cite{MutafchievKamenov})

$$A_{\text{pl}} := \frac{\zeta(3)^{7/36} \exp{(\zeta'(-1))}}{ 2^{11/36} \sqrt{3 \pi}}. $$ 

\renewcommand{\arraystretch}{1.5}

\begin{table}[h!]
    \centering
    \begin{tabular}{|c|c|c|c|c|}
    \hline
    $f(n)$ & $a$ & $b$ & $c$ & $\beta$\\ \hline 

        $p(n)$ & $\frac{1}{4 \sqrt 3}$ & $1$ & $\frac{ \pi\sqrt 2}{\sqrt 3}$ & $\frac12$\\[1mm]  \hline

        $\overline{p}(n)$ & $\frac{1}{8}$ & $1$ & $\pi$ & $\frac12$\\[1mm] \hline

       $q(n)$ & $\frac{1}{4 \sqrt[4]3}$ & $\frac34$ & $\frac{\pi}{\sqrt 3}$ & $\frac12$\\[1mm] \hline

       $sc(n)$  & $\frac{1}{2^{\frac{7}{4}} 3^{\frac{1}{4}}}$  & $\frac34$ & $\frac{\pi}{\sqrt 6}$ & $\frac12$ \\[1mm] \hline

        $r(n)$ &   $\frac{\pi}{12 \sqrt{2}}$ & $\frac{3}{2}$ & $\frac{\pi \sqrt{2}}{\sqrt{3}}$ & $\frac{1}{2}$ \\[1mm] \hline

        $\text{pl}(n)$ & $A_{\text{pl}}$ & $\frac{25}{36}$ & $\frac{3 \zeta(3)^{1/3}}{2^{2/3}}$ & $\frac{2}{3}$ \\[.5mm] \hline
       
    \end{tabular}
    \caption{Examples of asymptotics for various partitions functions of the form \eqref{eq:asymptotic_shape}.}
    \label{tab:asymptotics}
\end{table}

To obtain bounds on various probabilities involving such functions, we first establish the following lemma.

\begin{lemma} \label{lem:int_bounds}
    Let $b, c, \beta$ be fixed real numbers with $c, \beta > 0$. Then for all sufficiently large $x$, we have
    \begin{equation*}
        \int_x^{\infty} t^b \exp(-c t ^\beta) \, dt 
        \leq 
        \frac{2}{\beta c}
        x^{b+1-\beta} \exp(-cx^\beta) .
    \end{equation*}
\end{lemma}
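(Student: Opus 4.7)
The plan is to prove this tail bound by a single integration by parts followed by a self-referential estimate on the resulting lower-order integral. The key observation is that $\frac{d}{dt}\bigl(e^{-ct^{\beta}}\bigr) = -c\beta\, t^{\beta-1} e^{-ct^{\beta}}$, which suggests splitting $t^{b} e^{-ct^{\beta}} = t^{b-\beta+1} \cdot t^{\beta-1} e^{-ct^{\beta}}$ and integrating by parts with $u = t^{b-\beta+1}$ and $dv = t^{\beta-1} e^{-ct^{\beta}}\, dt$. Since $\beta, c > 0$, the boundary term $t^{b-\beta+1} e^{-ct^{\beta}}$ decays to $0$ at infinity regardless of the sign of $b-\beta+1$, and the identity
\[
\int_{x}^{\infty} t^{b} e^{-ct^{\beta}}\, dt \;=\; \frac{x^{b-\beta+1}}{c\beta}\, e^{-cx^{\beta}} \;+\; \frac{b-\beta+1}{c\beta} \int_{x}^{\infty} t^{b-\beta} e^{-ct^{\beta}}\, dt
\]
drops out.

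Next I would dispatch the correction integral. In the (easy) case $b - \beta + 1 \leq 0$, the correction term is non-positive, and we immediately obtain the stronger bound $\int_{x}^{\infty} t^{b} e^{-ct^{\beta}}\, dt \leq \frac{1}{c\beta}\, x^{b+1-\beta} e^{-cx^{\beta}}$. In the (main) case $b - \beta + 1 > 0$, I would use the monotonicity $t^{-\beta} \leq x^{-\beta}$ for $t \geq x$ (which requires only $\beta > 0$) to bound $t^{b-\beta} \leq x^{-\beta} t^{b}$, giving
\[
\int_{x}^{\infty} t^{b-\beta} e^{-ct^{\beta}}\, dt \;\leq\; x^{-\beta} \int_{x}^{\infty} t^{b} e^{-ct^{\beta}}\, dt.
\]
Writing $I(x)$ for the left-hand side of the lemma, this produces the self-referential inequality
\[
I(x) \;\leq\; \frac{x^{b-\beta+1}}{c\beta}\, e^{-cx^{\beta}} \;+\; \frac{b-\beta+1}{c\beta\, x^{\beta}}\, I(x).
\]
For $x$ sufficiently large, the coefficient $\tfrac{b-\beta+1}{c\beta\, x^{\beta}}$ is at most $\tfrac{1}{2}$, so solving for $I(x)$ and absorbing the resulting factor $1/(1 - \tfrac{1}{2}) = 2$ yields exactly $I(x) \leq \frac{2}{c\beta}\, x^{b+1-\beta} e^{-cx^{\beta}}$.

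There is no genuine obstacle here; the argument is a standard tail estimate for incomplete-gamma-type integrals, and an alternative derivation via the substitution $u = ct^{\beta}$ (reducing the problem to the classical asymptotic $\int_{X}^{\infty} u^{s} e^{-u}\, du = X^{s} e^{-X}(1 + O(1/X))$) would give the same conclusion. The only points requiring care are (i) verifying the vanishing of the boundary term at infinity, which follows from $\beta, c > 0$, and (ii) distinguishing the two sign cases for $b - \beta + 1$ so that the absolute constant $2$ suffices for all sufficiently large $x$.
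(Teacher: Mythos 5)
Your argument is correct, and it takes a genuinely different route from the paper. The paper substitutes $u = ct^{\beta}$ to rewrite the integral as $\beta^{-1} c^{-(b+1)/\beta}\,\Gamma\bigl(\tfrac{b+1}{\beta}, cx^{\beta}\bigr)$ and then invokes the classical asymptotic $\Gamma(s,z) \sim z^{s-1}e^{-z}$ as $z \to \infty$ (citing Temme) to conclude $\Gamma(s,z) \leq 2z^{s-1}e^{-z}$ for large $z$, which unwinds to the stated bound. You instead integrate by parts once, observe that the correction integral is either non-positive (when $b - \beta + 1 \leq 0$) or can be reabsorbed into the original integral via $t^{b-\beta} \leq x^{-\beta} t^{b}$ for $t \geq x$, and solve the resulting self-referential inequality once the coefficient $\tfrac{b-\beta+1}{c\beta x^{\beta}}$ drops below $\tfrac12$. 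In effect you reprove the first-order tail behavior of the incomplete gamma function from scratch rather than citing it; your version is more elementary and self-contained, at the cost of a short case analysis, while the paper's is shorter but leans on an external reference. One small point worth making explicit in the absorption step is that $I(x) < \infty$, so that subtracting $\tfrac12 I(x)$ from both sides is legitimate; this is immediate from the superpolynomial decay of the integrand, but it is the kind of step a referee would want stated.
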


\begin{proof}
    We make the substitution $u = ct^\beta$.  Thus we get $t = (u/c)^{1/\beta}$, $du = c \beta t^{\beta-1} \, dt$, and $dt = \beta^{-1} u^{-1} (u/c)^{1/\beta} du$. Thus
    \begin{multline*}
    \int_x^{\infty} t^b \exp(-c t ^\beta) dt =  \int_{c x ^\beta}^{\infty} \beta^{-1} u^{-1} (u/c)^{(b+1)/ \beta} \exp{(-u)} du \\ = \beta^{-1} c^{-(b+1)/\beta} \Gamma \left( \frac{b+1}{\beta}, cx^\beta \right), 
    \end{multline*}
    where $\Gamma(s, z)$ denotes the upper incomplete gamma function $\int_z^\infty t^{s-1} e^{-t} dt$. Now, given the well-known asymptotic $\Gamma(s,z) / (z^{s-1} e^{-z}) \to 1$ as $z \to \infty$ (e.g. see \cite{Temme}), we have that $\Gamma(s,z) \leq 2 z^{s-1} e^{-z}$ for all sufficiently large $z$.  This gives the result.
\end{proof}

We now prove our main theorem for this section which proves part (c) of Theorem~\ref{thm:introheuristic} by setting $\mathbf{f}_n=\mathbf{p}_n$.

\begin{theorem}\label{thm:mainasymptotic}
    Let $A(n) = an^{-b} \exp(cn^\beta)$ with $a,b,c,\beta>0$, set $\{\eps_n\}_{n=1}^\infty$ a sequence of positive real numbers such that $\eps_n \gg A(n)^{-1/k}$ as $n \to \infty$, let $\{\mathbf{f}_n\}_{n=1}^\infty$ be as defined in \eqref{eq:S_ndef}, and fix $k \geq 2$. Then as $d \to \infty$, 
    \begin{equation*}
        \mathbf{M}_{\mathbf{f},k}(d) < (2^{1/\beta} + \eps) \left(\frac{k}{c(k - 1)} \right)^{\frac{1}{\beta}} (\log d)^{\frac{1}{\beta}},
    \end{equation*}
    with probability 1.
\end{theorem}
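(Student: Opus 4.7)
The plan is to prove the bound via a Borel--Cantelli argument applied over integer values of $d$, combined with a union bound and the probability estimates of Lemma~\ref{lem:prob_delta}. Set $C := (2^{1/\beta} + \eps)\bigl(k/(c(k-1))\bigr)^{1/\beta}$ so that the claimed bound reads $N(d) := C (\log d)^{1/\beta}$. Since the event $\{\mathbf{M}_{\mathbf{f},k}(d) \geq N(d)\}$ is contained in $\bigcup_{n \geq N(d)} \{\mathbf{\Delta}_{\mathbf{f},k}(n) \leq d\}$, a union bound gives
\[
\mathbb{P}\bigl(\mathbf{M}_{\mathbf{f},k}(d) \geq N(d)\bigr) \leq \sum_{n \geq N(d)} \mathbb{P}\bigl(\mathbf{\Delta}_{\mathbf{f},k}(n) \leq d\bigr).
\]
It then suffices to show that $\sum_{d \in \N} \mathbb{P}(\mathbf{M}_{\mathbf{f},k}(d) \geq N(d))$ converges, since Borel--Cantelli will then yield $\mathbf{M}_{\mathbf{f},k}(d) < N(d)$ for all sufficiently large $d$ with probability $1$.

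Next, I would estimate the inner tail sum. The hypothesis $\eps_n \gg A(n)^{-1/k}$ ensures $\eps_n A(n) \gg A(n)^{1 - 1/k}$, and $\eps_n \to 0$ controls the $\eps_n^2$ appearing in the third error term of Lemma~\ref{lem:prob_delta}; hence all three summands in that lemma collapse into the single bound
\[
\mathbb{P}\bigl(\mathbf{\Delta}_{\mathbf{f},k}(n) \leq d\bigr) \ll d \cdot A(n)^{-(1 - 1/k)}.
\]
Substituting $A(n) \sim a n^{-b}\exp(c n^\beta)$ and comparing the tail sum to the integral $\int_{N(d)}^{\infty} t^{b(k-1)/k} \exp(-c(k-1)t^\beta/k)\, dt$ via Lemma~\ref{lem:int_bounds} yields
\[
\sum_{n \geq N(d)} A(n)^{-(1 - 1/k)} \ll N(d)^{b(k-1)/k + 1 - \beta} \exp\!\bigl(-c(k-1) N(d)^\beta/k\bigr).
\]

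Substituting $N(d)^\beta = C^\beta \log d = (2^{1/\beta} + \eps)^\beta \cdot \tfrac{k}{c(k-1)} \log d$, the exponential factor reduces to $d^{-(2^{1/\beta} + \eps)^\beta}$, and absorbing the polynomial factor $N(d)^{O(1)}$ into an extra $(\log d)^{O(1)}$ gives
\[
\mathbb{P}\bigl(\mathbf{M}_{\mathbf{f},k}(d) \geq N(d)\bigr) \ll d^{1 - (2^{1/\beta} + \eps)^\beta} (\log d)^{O(1)}.
\]
Since $(2^{1/\beta})^\beta = 2$ and $x \mapsto x^\beta$ is strictly increasing on $(0,\infty)$, we have $(2^{1/\beta} + \eps)^\beta > 2$, so the exponent on $d$ is strictly less than $-1$ and the sum over $d$ converges, completing the Borel--Cantelli step. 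The main obstacle is precisely this last summability: the extra factor of $d$ introduced by summing over integer $d$ is what forces the constant $2^{1/\beta}$, whereas a constant of $1$ would already suffice to drive $\mathbb{P}(\mathbf{M}_{\mathbf{f},k}(d) \geq N(d))$ to $0$ for each individual $d$ and would recover the deterministic lower bound of Theorem~\ref{asy_Mk}.
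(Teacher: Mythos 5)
Your proposal is correct and follows essentially the same route as the paper's proof: a Borel--Cantelli argument over $d$, a union bound over $n \geq N(d)$, the collapse of the three error terms of Lemma~\ref{lem:prob_delta} into $\ll d\,A(n)^{-(1-1/k)}$ using $\eps_n \gg A(n)^{-1/k}$, and the integral comparison via Lemma~\ref{lem:int_bounds} producing the factor $d^{-(2^{1/\beta}+\eps)^\beta}$ whose exponent beats the extra $d$ from the outer sum. Your closing remark on why the constant $2^{1/\beta}$ (rather than $1$) is forced is exactly the mechanism at work in the paper's argument.
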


\begin{proof}
    Fix some $\eps > 0$.  For each $d \geq 0$, let $E_d$ denote the event that 
    $$\mathbf{M}_{\mathbf{f},k}(d) > (2^{1/\beta} + \eps) \left(\frac{k}{c(k - 1)} \right)^{\frac{1}{\beta}} (\log d)^{\frac{1}{\beta}}.$$
    We shall show that $\sum_{d = 0}^{\infty} \mathbb{P} E_d$ is finite, and thus by the Borel-Cantelli lemma, only finitely many of the events $E_d$ occur, with probability 1.

    For brevity, we shall write $X_d := (2^{1/ \beta} + \eps) \Big(\frac{k}{c(k - 1)} \Big)^{\frac{1}{\beta}} (\log d)^{\frac{1}{\beta}}$. Note that
    \begin{align*}
        \mathbb{P} E_d = \mathbb{P} \left( \mathbf{M}_{\mathbf{f},k}(d) > X_d \right) 
        = \mathbb{P} \left(  \bigcup_{n=X_d  }^{\infty}  \mathbf{\Delta}_{\mathbf{f},k}(n) \leq d  \right) \leq \sum_{n = X_d}^{\infty} \mathbb{P} \left( \mathbf{\Delta}_{\mathbf{f},k}(n) \leq d  \right),
    \end{align*}
    where the last inequality follows from the union bound.  By Lemma~\ref{lem:prob_delta}, there is an absolute constant $C$ (depending only on the function $A(n)$ and $\eps_n$, and not on $d$ or $k$) such that for all $n > C$,
    \begin{equation*}
        \mathbb{P} \left( \mathbf{\Delta}_{\mathbf{f},k}(n) \leq d  \right) \leq \frac{2d+1}{k A(n)^{1 - \frac{1}{k}}} + \frac{6d+3}{2 \eps_n A(n)} + \frac{(8d+4)\eps_n^2}{A(n)^{1 - 1/k}}.
    \end{equation*}
    We now consider some sufficiently large $d$ satisfying $X_d > C$.  Then using our hypothesis that $\eps_n \gg A(n)^{-1/k}$, we obtain that
    \begin{align*}
        \mathbb{P} E_d \leq \sum_{n = X_d}^{\infty}  \mathbb{P} \left( \mathbf{\Delta}_{\mathbf{f},k}(n) \leq d  \right) &\leq \sum_{n = X_d}^{\infty} \left( \frac{2d+1}{k A(n)^{1 - 1/k}} + \frac{6d+3}{2 \eps_n A(n)} + \frac{(8d+4)\eps_n^2}{A(n)^{1 - 1/k}} \right) \\
        &\leq 10d \sum_{n = X_d}^{\infty} \frac{1}{A(n)^{1 - 1/k}} \qquad (\text{as } \eps_n \to 0) \\
        &\leq \frac{10d}{A(X_d)^{1 - 1/k}} + \int_{X_d}^{\infty} \frac{10d}{A(t)^{1 - 1/k}} dt,
    \end{align*}
where we've used the integral test to obtain an upper bound for $\sum_{n = X_d}^{\infty} \frac{1}{A(n)^{1 - 1/k}}$, noting that $A(t)$ is eventually non-decreasing as $t \to \infty$.
     
    To estimate the first term, we note that there exists some $0 < \eps^{\prime} < (2^{\frac{1}{\beta}} + \eps)^{\beta} - 2$ such that  
    $$A(X_d)^{1 - 1/k} \geq d^{2+\eps'}$$
    for all sufficiently large $d$.
    To estimate the second term, we use the integral bounds from Lemma \ref{lem:int_bounds}.  In particular, we note that
    \begin{align*}
        \int_{X_d}^{\infty} \frac{1}{A(t)^{1 - 1/k}} dt &= a^{1/k - 1} \int_{X_d}^{\infty} t^{b(k-1)/k} \exp{\Big(\frac{ - c(k-1)}{k} \cdot t^\beta \Big)} dt \\
        &\leq a^{1/k - 1} \cdot 
        \frac{2k}{\beta c (k-1)}
        \cdot X_d^{\frac{b(k-1)}{k} + 1 - \beta} \cdot \exp{\Big( \frac{ - c(k-1)}{k} \cdot X_d^\beta \Big)}.
    \end{align*}
    By the definition of $X_d$, we note that
    \begin{equation*}
        \exp{\Big( \frac{ - c(k-1)}{k} \cdot X_d^\beta \Big)} = \frac{1}{d^{(2^{1/\beta}+\eps)^\beta}}.
    \end{equation*}
    Since $(2^{1/\beta} + \eps)^{\beta} > 2$, there exists some $\delta = \delta_{\beta, \eps}$ such that $$\exp{\Big( \frac{ - c(k-1)}{k} \cdot X_d^\beta \Big)} = \frac{1}{d^{2+\delta}},$$ and therefore for all sufficiently large $d$, we have the bound
    \begin{align*}
        10d \int_{X_d}^{\infty}  \frac{1}{A(t)^{1 - 1/k}} dt \ll_k  (\log{d})^{\frac{b(k-1)+k}{k\beta} -1}  \cdot \frac{1}{d^{1+\delta}} \leq \frac{1}{d^{1 + \eps^{''}}}, 
    \end{align*}
    for some $\eps^{\prime \prime} > 0$.  This gives the bound
    \begin{equation*}
        \mathbb{P} E_d \ll_k \frac{1}{d^{1 + \eta}} \; \text{where} \; \eta = \min(\eps^{\prime}, \eps^{\prime\prime}),
    \end{equation*}
    for all sufficiently large $d$. Thus, the sum $\sum_{d = 0}^{\infty} \mathbb{P} E_d$ converges, and by the Borel-Cantelli lemma, only finitely many of the events $E_d$ occur, with probability 1.
\end{proof}

Combining the (unconditional) lower bound for $M_{f,k}(d)$ shown in Proposition~\ref{asy_Mk} with Theorem \ref{thm:mainasymptotic} gives the following corollary.

\begin{cor}
    Let $A(n) = an^{-b} \exp(cn^\beta)$ and let $\{\eps_n\}_{n=1}^\infty$ be a sequence of positive real numbers such that $\eps_n \gg A(n)^{-1/k}$ as $n \to \infty$.  Let $\{\mathbf{f}_n\}_{n=1}^\infty$ be a sequence of random variables as defined in \eqref{eq:S_ndef}. Fix some positive integer $k \geq 2$.  Then 
    \begin{equation*}
        \mathbf{M}_{\mathbf{f},k}(d) \asymp (\log d)^{1/\beta}
    \end{equation*}
    as $d \to \infty$, with probability 1.
\end{cor}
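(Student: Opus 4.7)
The plan is a straightforward sandwich argument: combine the upper bound from Theorem~\ref{thm:mainasymptotic} with a deterministic (pathwise) lower bound obtained by applying Proposition~\ref{asy_Mk} to each realization of the random sequence. Once both sides are seen to grow like $(\log d)^{1/\beta}$ with matching exponent, the claim $\mathbf{M}_{\mathbf{f},k}(d)\asymp(\log d)^{1/\beta}$ follows immediately.

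First I would dispatch the upper bound. Under exactly the hypotheses of the corollary, Theorem~\ref{thm:mainasymptotic} yields, for every $\eps>0$,
\[
\mathbf{M}_{\mathbf{f},k}(d)<\bigl(2^{1/\beta}+\eps\bigr)\Bigl(\tfrac{k}{c(k-1)}\Bigr)^{1/\beta}(\log d)^{1/\beta}
\]
for all sufficiently large $d$, with probability $1$, so no extra work is needed here.

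Next I would turn to the lower bound, invoking Proposition~\ref{asy_Mk} realization by realization. Because $\mathbf{f}_n$ is supported on $\mathcal{S}_n\subset[A(n)(1-\eps_n),A(n)(1+\eps_n)]$ and $\eps_n\to 0$, \emph{every} outcome $\omega$ produces a deterministic function $f_\omega:\N\to\N$ satisfying $f_\omega(n)\sim A(n)=a n^{-b}\exp(c n^{\beta})$. After matching notation (the constants $\alpha$ and $c$ in Proposition~\ref{asy_Mk} correspond to $c$ and $\beta$ in the present setting), that proposition gives
\[
\mathbf{M}_{\mathbf{f},k}(d)\geq\Bigl(\tfrac{k}{c(k-1)}\Bigr)^{1/\beta}(\log d)^{1/\beta}\Bigl(1+O\bigl(\tfrac{\log\log d}{\log d}\bigr)\Bigr)
\]
for every realization, and hence a fortiori with probability $1$, for all sufficiently large $d$. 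Combining the two bounds produces constants $0<C_1\leq C_2$ with $C_1(\log d)^{1/\beta}\leq\mathbf{M}_{\mathbf{f},k}(d)\leq C_2(\log d)^{1/\beta}$ for all large $d$ almost surely, which is the desired $\asymp$ statement.

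The only subtlety worth flagging, rather than a genuine obstacle, is the legitimacy of using Proposition~\ref{asy_Mk} in a sample-path fashion: one should note that the shrinking multiplicative radius $\eps_n$ about $A(n)$ forces the asymptotic $f_\omega(n)\sim A(n)$ to hold for \emph{every} $\omega$, not merely almost every one. This observation is what lets the deterministic lower bound and the probabilistic upper bound be combined cleanly without any null-set juggling.
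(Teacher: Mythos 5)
Your proposal is correct and matches the paper's argument exactly: the paper obtains this corollary by combining the probability-one upper bound of Theorem~\ref{thm:mainasymptotic} with the unconditional lower bound of Proposition~\ref{asy_Mk}, applied pathwise since (as noted in Section~\ref{sec:model}) every realization of $\{\mathbf{f}_n\}$ yields a function with $f(n)\sim A(n)$. Your remark about the legitimacy of the sample-path application is precisely the observation the paper relies on.
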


We next consider for fixed $d \geq 0$, the boundedness of the sequence $\mathbf{M}_{\mathbf{f},k}(d)$ as $k \to \infty$.  The following result proves part (d) of Theorem \ref{thm:introheuristic} by setting $\mathbf{f}_n=\mathbf{p}_n$.

\begin{theorem}
    Let $\{\mathbf{f}_n\}_{n=1}^\infty$ be as defined in \eqref{eq:S_ndef}, and fix $d \geq 0$.  Then $\{ \mathbf{M}_{\mathbf{f},k}(d) \}_{k=2}^{\infty}$ is bounded with probability 1.
\end{theorem}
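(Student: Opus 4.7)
The plan is to derive this statement directly from Lemma~\ref{lem:Mdprob}, which establishes that $\mathbf{M}_{\mathbf{f}}(d)$ is finite with probability $1$. The key observation is that $\mathbf{M}_{\mathbf{f}}(d)$ dominates every $\mathbf{M}_{\mathbf{f},k}(d)$ pointwise: for any fixed outcome and any $k \geq 2$, every integer $n$ satisfying $\mathbf{\Delta}_{\mathbf{f},k}(n) \leq d$ also satisfies $\mathbf{\Delta}_{\mathbf{f},k'}(n) \leq d$ for some $k' \geq 2$ (namely $k' = k$), and therefore contributes to the set defining $\mathbf{M}_{\mathbf{f}}(d)$ in \eqref{eq:RVMdef}. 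Comparing maxima over these nested sets yields the pointwise inequality
\[
\mathbf{M}_{\mathbf{f},k}(d) \;\leq\; \mathbf{M}_{\mathbf{f}}(d) \qquad \text{for every } k \geq 2.
\]

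Taking the supremum over $k \geq 2$ on the left-hand side preserves the inequality, so $\sup_{k \geq 2} \mathbf{M}_{\mathbf{f},k}(d) \leq \mathbf{M}_{\mathbf{f}}(d)$. By Lemma~\ref{lem:Mdprob}, the right-hand side is finite with probability $1$, which is precisely the assertion that the sequence $\{\mathbf{M}_{\mathbf{f},k}(d)\}_{k=2}^{\infty}$ is bounded with probability $1$.

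There is essentially no obstacle, since all of the probabilistic work has already been packaged into Lemma~\ref{lem:Mdprob} (which itself rests on the Borel--Cantelli lemma applied to the estimate of Lemma~\ref{lem:prob_delta2}). The only minor subtlety is the bookkeeping associated with the convention that $\mathbf{M}_{\mathbf{f},k}(d)$ and $\mathbf{M}_{\mathbf{f}}(d)$ are allowed to equal $\infty$, but the pointwise domination above handles this cleanly: once $\mathbf{M}_{\mathbf{f}}(d)$ is finite, every $\mathbf{M}_{\mathbf{f},k}(d)$ is automatically finite and uniformly bounded by the same constant.
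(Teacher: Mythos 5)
Your proof is correct, but it takes a genuinely different and more economical route than the paper. The paper re-runs a Borel--Cantelli argument over the index $k$: it defines events $E_k = \{\mathbf{M}_{\mathbf{f},k}(d) > A_d\}$ for the constant $A_d$ from Lemma~\ref{lem:prob_delta2}, bounds $\sum_k \mathbb{P} E_k$ by a double sum over $k$ and $n$, interchanges the order of summation via Fubini, and concludes that only finitely many $E_k$ occur. You instead exploit the pointwise set inclusion $\{n : \mathbf{\Delta}_{\mathbf{f},k}(n) \leq d\} \subseteq \{n : \mathbf{\Delta}_{\mathbf{f},k'}(n) \leq d \text{ for some } k' \geq 2\}$, giving $\mathbf{M}_{\mathbf{f},k}(d) \leq \mathbf{M}_{\mathbf{f}}(d)$ for every outcome and every $k$, and then cite Lemma~\ref{lem:Mdprob}. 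This is cleaner and in one respect tighter: the paper's argument only controls $\mathbf{M}_{\mathbf{f},k}(d)$ for all \emph{sufficiently large} $k$ by the constant $A_d$, and strictly speaking still needs Lemma~\ref{lem:Mkdprob} to handle the finitely many exceptional indices, whereas your domination bounds the entire sequence at once. What the paper's route buys in exchange is a \emph{deterministic} bound $A_d$ (depending only on $d$ and the model data) valid for all large $k$ on an almost-sure event, which is the shape of statement needed to feed into Proposition~\ref{prop:limit} and the definition of $N_{f,d}$; your bound $\mathbf{M}_{\mathbf{f}}(d)$ is an almost-surely finite random variable rather than a constant. For the theorem as stated, your argument suffices.
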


\begin{proof}
    By Lemma~\ref{lem:prob_delta2}, we have that there exists some constant $A_d$ such that equation (\ref{eq:upper_delta_2}) holds.   
    For each $k \geq 2$, let $E_k$ denote the event that
    \begin{equation*}
        \mathbf{M}_{\mathbf{f},k}(d) > A_d .
    \end{equation*}
    As in the proof of Theorem \ref{thm:mainasymptotic}, we shall show that $\sum_{k=2}^{\infty} \mathbb{P} E_k$ is finite, and thus by the Borel-Cantelli lemma, only finitely many of the events $E_k$ occur, with probability 1.  This will imply that $\mathbf{M}_{\mathbf{f}, k}(d)$ is eventually bounded for all sufficiently large $k$ with probability 1.

Using the union bound, we have that
    \begin{align*}
        \sum_{k=2}^{\infty} \mathbb{P} E_k = \sum_{k=2}^{\infty} \mathbb{P} ( \mathbf{M}_{\mathbf{f},k}(d) > L_d  ) &\leq  \sum_{k=2}^{\infty} \sum_{n=A_d+1}^{\infty} \mathbb{P} (\mathbf{\Delta}_{\mathbf{f},k}(n) \leq d ) \\
        &= \sum_{n=A_d+1}^{\infty} \sum_{k=2}^{\infty} \mathbb{P} (\mathbf{\Delta}_{\mathbf{f},k}(n) \leq d ),
    \end{align*}
    where the last equality conditionally follows by Fubini's theorem, once we've established absolute convergence of the double sum.

    By an argument analogous to that in the proof of Lemma~\ref{lem:prob_delta2}, we have
    \begin{equation*}
        \sum_{k=2}^{\infty} \mathbb{P} (\mathbf{\Delta}_{\mathbf{f},k}(n) \leq d )
        \leq \frac{2d+1}{2 \sqrt{A(n)}} +  \frac{2d+1}{A(n)^{2/3}} + \frac{(8d+5) \eps_n^2}{\sqrt{A(n)}} + \frac{(6d+3) \log_2 A(n)}{2 \eps_n A(n)}
    \end{equation*}
    for all sufficiently large $n$.  In particular, this implies that
    \begin{equation*}
        \sum_{k=2}^{\infty} \mathbb{P} E_k \ll_d \sum_{n = A_d + 1}^{\infty} \frac{1}{\sqrt{A(n)}}.
    \end{equation*}
    As the infinite sum $\sum \frac{1}{\sqrt{A(n)}}$ converges by assumption, this proves the theorem.
    
\end{proof}

\subsection{Computing expectation}\label{ss:expectationforpp}

We illustrate Corollary~\ref{cor:finitely_many_powers} for various partition functions by explicitly computing the expected number of perfect powers.  Let $\mathbf{X}_{\mathbf{f}}$ be a random variable denoting the number of perfect powers the sequence $\{ \mathbf{f}_n \}_{n = 1}^{\infty}$ takes.  While we know that $\mathbf{X}_{\mathbf{f}}$ is finite, we can calculate $\mathbb{E} \mathbf{X}_{\mathbf{f}} $ for a choice of functions $A(n)$ and $\eps_n$, using
\begin{equation} \label{eqn:expectation}
    \mathbb{E} \mathbf{X}_{\mathbf{f}} = \sum_{n=1}^{\infty} \mathbb{P} (\mathbf{f}_n \text{ is a perfect power} ) = \sum_{n=1}^{\infty} \frac{  \card{\{ x \in \mathcal{S}_n : x \text{ is a perfect power}  \} } }{\card{ \mathcal{S}_n }}.
\end{equation}

For each partition function $f(n)$, we set $A(n) = an^{-b} \exp{( c n^\beta)}$ for fixed $a,b,c,\beta$.  As the value of $\mathbb{E} \mathbf{X}_\mathbf{f}$ depends also on a choice of errors $\eps_n$, we chose to set $\eps_n$ as the second order term in the asymptotic expansion of $f(n)/A(n)$, generalizing the Rademacher expansion for $p(n)$.  In all cases considered, this was of the form $\eps_n = B/n^\beta$ for some suitable constant $B$.  We observe from \eqref{eqn:expectation} that the expectation for the number of perfect powers obtained by the functions considered is very small.

For the usual partition function $p(n)$, we set $a = 1/(4\sqrt{3})$, $b = 1$, $c = \pi \sqrt{2/3}$, and $\beta = 1/2$.  We then set $\eps_n$ to be the second order term in the Rademacher expansion of $p(n)/A(n)$, in particular $\eps_n = (\sqrt{3/2} \, \pi + \pi/(24 \sqrt{6}))/\sqrt{n}$.  The computation yields
\begin{equation*}
    \mathbb{E} \mathbf{X}_{\mathbf{p}} \approx 2.526. 
\end{equation*}

For the strict partition function  $q(n)$, we  set $a = 1/(4 \sqrt[4]{3} n^{3/4})$, $b = 3/4$, $c = \pi/\sqrt{3}$, and $\beta = 1/2$.  We then set $\eps_n = (\pi/(48 \sqrt{3}) - 3 \sqrt{3} / (8 \pi))/\sqrt{n}$. The computation yields
\begin{equation*}
    \mathbb{E} \mathbf{X}_{\mathbf{q}} \approx 5.805. 
\end{equation*}

For the self-conjugate partition function, we  set $a = 1/(2 \cdot 24^{1/4})$, $b = 3/4$, $c = \pi/\sqrt{6}$, and $\beta = 1/2$.  We then set $\eps_n = (3 \sqrt{6}/(8 \pi) + \pi / (48 \sqrt{6}) ) / \sqrt{n}$. The computation yields
\begin{equation*}
    \mathbb{E} \mathbf{X}_{\mathbf{sc}} \approx 13.426. 
\end{equation*}

For the non-unitary partition function, we  set $a = \pi/(12 \sqrt{2})$, $b = 3/2$, $c = \pi\sqrt{2/3}$, and $\beta = 1/2$.  We then set $\eps_n = (3 \sqrt{3/2} / \pi + 13 \pi / (24 \sqrt{6})) / \sqrt{n}$. The computation yields
\begin{equation*}
    \mathbb{E} \mathbf{X}_{\mathbf{r}} \approx 4.093. 
\end{equation*}

For the plane partition function, we set $a = A_{\text{pl}}$, $b = 25/36$, $c = 3 \zeta(3)^{1/3}/2^{2/3}$, and $\beta = 2/3$.  We then set $\eps_n = (277/(864 (2 \zeta(3))^{1/3} ) - \zeta(3)^{2/3} / (1440 \cdot 2^{1/3}) ) / n^{2/3}$.  The computation yields
\begin{equation*}
    \mathbb{E} \mathbf{X}_{\mathbf{pl}} \approx 0.758. 
\end{equation*}

\section{Computations}\label{sec:computations}

Here we provide computational data which supports Conjecture~\ref{conj:MOT1} as well as related conjectures in Section \ref{sec:conjectures} pertaining to other partition counting functions.  We also provide some data on additional related investigations. Throughout this section, conjectured values for $M_{f,k}(d)$ are determined by calculating $\max \{ n \leq B : \Delta_{f,k}(n) \leq d \}$ for finite $B$. For these calculations, $B = 10^7$ was used when $f(n)=p(n)$, and $B=10^5$ was used in all other cases.

Figure \ref{fig:pn_mkd} gives support for Conjecture \ref{conj:Mkdasymp} by showing the conjectured asymptotic for $M_k(d)$ as a dotted line and values of $M_k(d)$ as a solid line for various $k$.  In Figure \ref{fig:pn_mkd_diff} we plot the difference between $M_{k}(d)$ and the lower bound given in Lemma~\ref{lem:Delta_bound}.

\begin{figure}[h!]
    \centering
    \begin{subfigure}[t]{\linewidth}
        \centering
        \includegraphics[width=\linewidth]{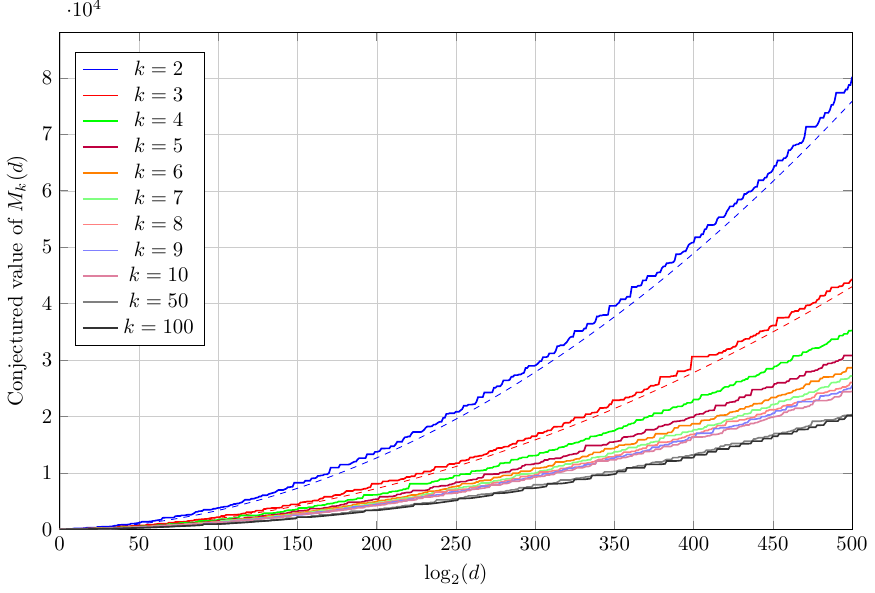}
        \caption{Conjectured value for $M_k(d)$ for the partition function $p(n)$.}
        \label{fig:pn_mkd}
    \end{subfigure}

    \vspace{0.8em}

    \begin{subfigure}[t]{\linewidth}
        \centering
        \includegraphics[width=\linewidth]{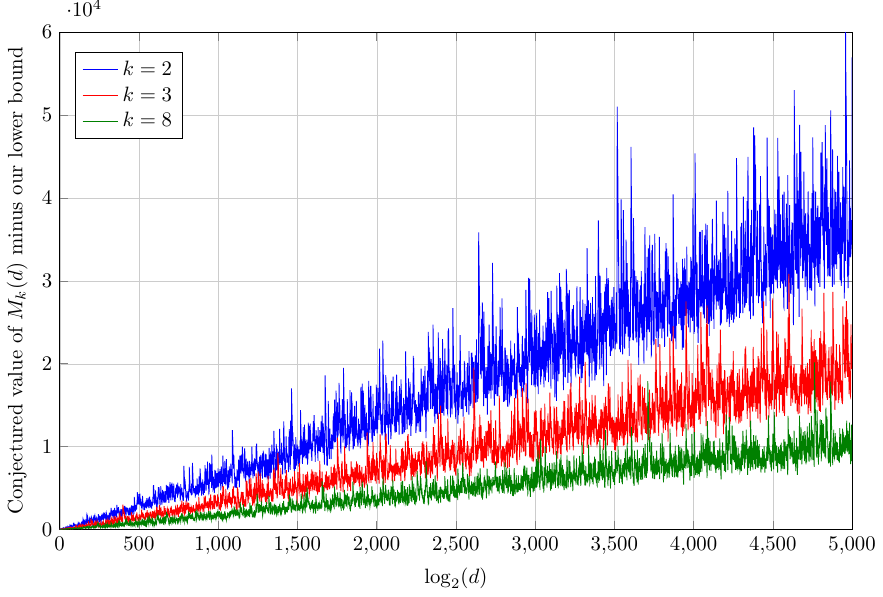}
        \caption{Conjectured value for $M_k(d)$ minus the lower bound $M^{\text{lower}}_{k}(d)$ for $p(n)$.}
        \label{fig:pn_mkd_diff}
    \end{subfigure}

    \caption{Computed values of $M_k(d)$ for the partition function $p(n)$, based on checking all $n \le 10^7$.}
    \label{fig:pn_mkd_combined}
\end{figure}

We further observe behavior similar to that in Figure \ref{fig:pn_mkd} for some other well-known partition counting functions.  We write $\overline{p}(n)$ to denote the number of overpartitions of $n$, $\text{pl}(n)$ for the number of plane partitions of $n$, $p_2(n)$ for the number of $2$-colored partitions of $n$, and $\text{sc}(n)$ for the number of self-conjugate partitions of $n$. 

Figures  \ref{fig:overpartitions}, \ref{fig:planepartitions}, and \ref{fig:twocoloredpartitions} show the conjectured $M_{f,k}(d)$ values for $d\leq 2^{450}$ for various $k$ values and for $f(n)=\overline{p}(n)$, $\text{pl}(n)$, $p_2(n)$, and $\text{sc}(n)$, respectively.

\begin{figure}[h!]
    \centering

    \begin{subfigure}{0.48\linewidth}
        \centering
        \includegraphics[width=\linewidth]{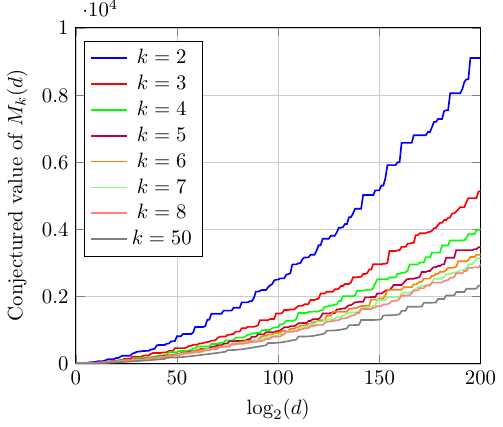}
        \caption{Conjectured value for $M_{\overline{p},k}(d)$ (overpartitions). OEIS: A015128.}
        \label{fig:overpartitions}
    \end{subfigure}
    \hfill
    \begin{subfigure}{0.48\linewidth}
        \centering
        \includegraphics[width=\linewidth]{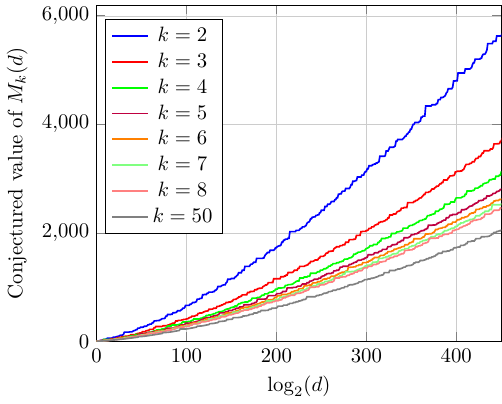}
        \caption{Conjectured value for $M_{\text{pl},k}(d)$ (plane partitions). OEIS: A000219.}
        \label{fig:planepartitions}
    \end{subfigure}

    \vspace{1em}

    \begin{subfigure}{0.48\linewidth}
        \centering
        \includegraphics[width=\linewidth]{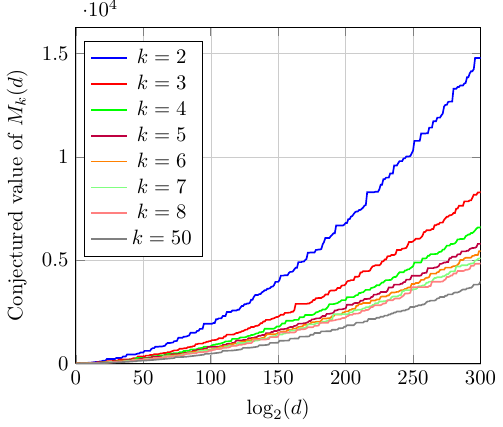}
        \caption{Conjectured value for $M_{p_2,k}(d)$ ($2$-colored partitions). OEIS: A000712.}
        \label{fig:twocoloredpartitions}
    \end{subfigure}
    \hfill
    \begin{subfigure}{0.48\linewidth}
        \centering
        \includegraphics[width=\linewidth]{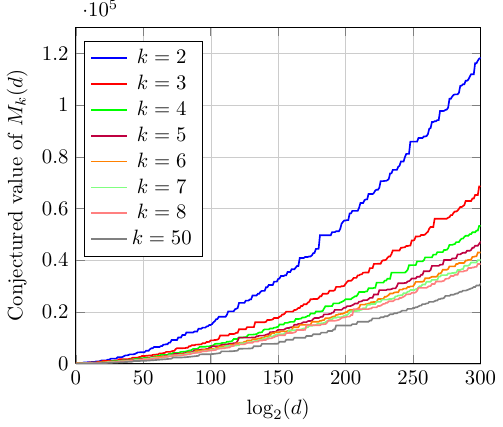}
        \caption{Conjectured value for $M_{\text{sc},k}(d)$ (self-conjugate partitions). OEIS: A000700.}
        \label{fig:selfconjpartitions}
    \end{subfigure}
    
    \caption{Conjectured values of $M_{f,k}(d)$ for some partition functions $f$, based on checking all $n \leq 10^5$.}
    \label{fig:partition_mkd}
\end{figure}

\section{Conjectures}\label{sec:conjectures}

In addition to Conjecture \ref{conj:Mkdasymp}, in this section we make a number of further conjectures motivated by the computational data discussed in Section \ref{sec:computations}, and in a similar spirit to the conjectures of Merca--Ono--Tsai \cite{MercaOnoTsai}. 

Our first conjecture is a direct analogue of Sun's conjecture, as stated in Conjecture \ref{conj:sun} for plane partitions.

\begin{conj}
The plane partition function $pl(n)$ is never a perfect power of the form $a^b$ for integers $a,b>1$.
\end{conj}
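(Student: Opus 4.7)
The plan is to attack this conjecture in two complementary regimes, mirroring the strategy by which progress on Sun's Conjecture~\ref{conj:sun} for $p(n)$ has been pursued. First, one carries out an extensive computational verification: using MacMahon's identity $\sum_{n \geq 0} \text{pl}(n) q^n = \prod_{n \geq 1} (1 - q^n)^{-n}$, compute $\text{pl}(n)$ for all $n$ up to some large bound $N$ (paralleling Alekseyev's verification of Sun's conjecture to $n \leq 10^8$), and directly check via the criterion~\eqref{eq:deltamin} that $\text{pl}(n)$ is never a perfect power in that range. For the range $n > N$, one seeks arithmetic obstructions via congruences modulo small auxiliary primes.

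For the second step, I would split the analysis by the exponent $b$. For each fixed small prime $b \in \{2, 3, 5, 7, \dots\}$, the goal is to identify an auxiliary prime $\ell$ and a union of arithmetic progressions $n \equiv r_i \pmod{m}$ covering all residues mod $m$ on each of which $\text{pl}(n)$ lies in a residue class modulo $\ell$ that is not a $b$th power residue. This would rule out $\text{pl}(n) = a^b$ for all $n$ sufficiently large in each progression. The tools here come from the modular-forms interpretation of $\prod(1 - q^n)^{-n}$ (related to derivatives of the Dedekind eta function) and Serre's theory of mod $\ell$ modular forms, in the spirit of the techniques used by Ono~\cite{Ono2025}. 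By Proposition~\ref{prop:multiple}, it suffices to treat $b$ prime. For the remaining case of large $b$, the rapid growth $\text{pl}(n) \sim A_{\text{pl}} n^{-25/36} \exp\bigl(\tfrac{3 \zeta(3)^{1/3}}{2^{2/3}} n^{2/3}\bigr)$ forces any representation $\text{pl}(n) = a^b$ with $a \geq 2$ to satisfy $b \ll n^{2/3}$; combining this with a congruence sieve on $a$ should reduce the problem to finitely many triples $(n, a, b)$ that can in principle be resolved explicitly.

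The hard part, and the reason this mirrors the open status of Sun's conjecture, is the lack of uniform congruence obstructions for $\text{pl}(n)$ analogous to Ramanujan's classical congruences for $p(n)$. Congruence properties of $\text{pl}(n)$ modulo small primes are known in scattered cases but not systematically enough to cover every arithmetic progression. Compounding this, the heuristic expectation $\mathbb{E}\,\mathbf{X}_{\mathbf{pl}} \approx 0.758$ computed in \S\ref{ss:expectationforpp} indicates that the conjecture holds but only marginally, so any unconditional proof must be essentially tight rather than comfortably away from the boundary. A successful execution would likely require combining the modular-forms framework with new analytic input, perhaps along the lines of the equidistribution phenomena explored in Section~\ref{sec:equidistribution} specialized to the fractional parts of $\text{pl}(n)^{1/b}$, in order to rule out accidental coincidences at arbitrarily large scale.
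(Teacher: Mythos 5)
This statement is a \emph{conjecture} in the paper, not a theorem: the authors offer no proof, only computational evidence (Section~\ref{sec:computations}) and the heuristic support of Corollary~\ref{cor:finitely_many_powers} together with the expectation computation $\mathbb{E}\,\mathbf{X}_{\mathbf{pl}} \approx 0.758$ in \S\ref{ss:expectationforpp}. Your proposal is likewise not a proof but a research program, and you concede as much in your final paragraph: the decisive ingredient --- a covering system of arithmetic progressions on each of which $\text{pl}(n)$ avoids $b$th power residues modulo some auxiliary prime --- is exactly what is not known to exist. A strategy whose key step is acknowledged to be unavailable does not establish the statement, so the gap here is total: nothing beyond the finite computational check in the first step is actually carried out or carries over from known results.

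Beyond the overall incompleteness, two specific steps would fail as described. First, the generating function $\prod_{n \geq 1}(1-q^n)^{-n}$ (the MacMahon function) is \emph{not} a modular form or an eta quotient --- the exponent varies with $n$ --- so Serre's theory of mod $\ell$ modular forms does not apply to it in the way it applies to $\prod(1-q^n)^{-1}$; Ono's result \cite{Ono2025} concerns partitions into parts from $\{1,\dots,k\}$, whose counting functions are quasi-polynomials, a completely different mechanism that also does not transfer to $\text{pl}(n)$. Second, the large-$b$ reduction is unsubstantiated: the growth rate only gives $b \ll n^{2/3}$, which leaves infinitely many triples $(n,a,b)$, and the claim that ``a congruence sieve on $a$'' reduces this to finitely many cases is asserted without any argument. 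Your appeal to Proposition~\ref{prop:multiple} to restrict to prime exponents is the one sound reduction, but it does not close either gap.
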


We also give an analogue of Conjecture \ref{conj:Mkdasymp} for the overpartition function $\overline{p}(n)$.

\begin{conj}
Let $k\geq 2$.  Then $M_{\overline{p},k}(d) \sim  \frac{1}{\pi^2}  \Big( \frac{k}{k-1} \Big)^2 (\log{d})^2$ as $d \to \infty$.
\end{conj}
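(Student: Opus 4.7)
The plan is to decompose the conjectured asymptotic into a provable lower bound and a conjectural matching upper bound, exactly parallel to the treatment of $p(n)$ via Theorem~\ref{thm:Mkd_intro} and Conjecture~\ref{conj:Mkdasymp}.

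For the lower bound, the proof is immediate: Theorem~\ref{asy_Mk} applies directly to $\overline{p}(n)$ with the parameters read off from Table~\ref{tab:asymptotics}, namely $a=\tfrac{1}{8}$, $b=1$, $c=\pi$, $\beta=\tfrac{1}{2}$. Since $\left(\tfrac{k}{c(k-1)}\right)^{1/\beta}=\tfrac{1}{\pi^2}\left(\tfrac{k}{k-1}\right)^2$, the theorem yields
\[
M_{\overline{p},k}(d)\;\geq\;\frac{1}{\pi^2}\left(\frac{k}{k-1}\right)^2(\log d)^2\left(1+O\!\left(\frac{\log\log d}{\log d}\right)\right),
\]
which matches the conjectured constant. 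No further work is required on this side beyond invoking the already-proved theorem.

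For the upper bound, I would first turn to the probabilistic framework of Section~\ref{sec:model}. Setting $A(n)=\tfrac18 n^{-1}\exp(\pi\sqrt n)$ and choosing $\eps_n$ of order $n^{-1/2}$ (for example from the second-order term of the Hardy--Ramanujan--Zuckerman expansion for $\overline{p}(n)$), Theorem~\ref{thm:mainasymptotic} applied with $\beta=1/2$ immediately gives, with probability $1$,
\[
\mathbf{M}_{\overline{\mathbf{p}},k}(d)\;<\;(4+\eps)\cdot\frac{1}{\pi^2}\left(\frac{k}{k-1}\right)^2(\log d)^2,
\]
since $2^{1/\beta}=4$. This constitutes heuristic support for the conjecture, but falls short by a factor of $4$.

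The main obstacle is closing this factor of $4$ and then making the argument unconditional for the actual function $\overline{p}(n)$. The $2^{1/\beta}$ loss is structural in the proof of Theorem~\ref{thm:mainasymptotic}: the Borel--Cantelli step requires $\sum_d\mathbb{P}E_d$ to converge, which forces $\exp(-c(k-1)X_d^{\beta}/k)\leq d^{-(2+\delta)}$ rather than $d^{-(1+\delta)}$, doubling the required threshold in the exponent and thereby squaring it (via $\beta=1/2$) when passing to $X_d$. Removing this slack calls for a second-moment analysis of the number of integers $n\leq X_d$ with $\mathbf{\Delta}_{\overline{\mathbf{p}},k}(n)\leq d$, exploiting near-independence across $n$ that the union bound discards. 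Making the conclusion unconditional for $\overline{p}(n)$ itself would additionally require proving that $\{\sqrt[k]{\overline{p}(n)}\}_{n\geq1}$ is equidistributed modulo $1$---the direct analogue of Conjecture~\ref{conj: equidistribution}---which is the deepest obstacle, as even the corresponding statement for $\{\sqrt{p(n)}\}$ is open.
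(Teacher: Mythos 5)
This statement is a conjecture in the paper, not a theorem: the paper offers only computational evidence (Section~\ref{sec:computations}) and no proof, exactly as with Conjecture~\ref{conj:Mkdasymp} for $p(n)$. Your analysis is correct and mirrors the paper's own posture precisely: the lower bound follows from Theorem~\ref{asy_Mk} with $\alpha=\pi$, $c=1/2$ (giving the constant $\tfrac{1}{\pi^2}(\tfrac{k}{k-1})^2$), the heuristic upper bound from Theorem~\ref{thm:mainasymptotic} carries the same $(2^{1/\beta}+\eps)=(4+\eps)$ factor that appears in Theorem~\ref{thm:introheuristic}(c), and your diagnosis of where the factor of $4$ enters (the Borel--Cantelli convergence requirement forcing $d^{-(2+\delta)}$ rather than $d^{-(1+\delta)}$) and of the unconditional obstruction is accurate. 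Nothing further is expected here, since no complete proof exists in the paper.
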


We next consider analogues of Conjecture \ref{conj:MOT3&4} for overpartitions and $r$-colored partitions, i.e., that the hypothesis in Proposition \ref{prop:limit} that $\{ M_{f,k}(d)\}_{k=2}^\infty$ is satisfied for these counting functions. 

\begin{conj}
For each non-negative integer $d$, there is a positive integer $d$ such that for all $k \geq N_d$, we have 
\[
M_{\overline{p},k}(d) = L_{\overline{p}}(d).
\]
\end{conj}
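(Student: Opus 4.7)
The plan is to reduce the conjecture to a boundedness statement and then invoke the paper's probabilistic model. By Proposition~\ref{prop:limit}, if $\{M_{\overline{p},k}(d)\}_{k=2}^{\infty}$ is bounded for each fixed $d\geq 0$, then $\lim_{k\to\infty} M_{\overline{p},k}(d) = L_{\overline{p}}(d)$, so a smallest $N_d$ with $M_{\overline{p},k}(d) = L_{\overline{p}}(d)$ for all $k\geq N_d$ automatically exists. Thus the whole conjecture reduces to proving the boundedness claim for each $d$.

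The overpartition function satisfies the asymptotic $\overline{p}(n) \sim \tfrac{1}{8n}\exp(\pi\sqrt{n})$, which has the form $an^{-b}\exp(cn^{\beta})$ with $(a,b,c,\beta) = (\tfrac{1}{8}, 1, \pi, \tfrac{1}{2})$ recorded in Table~\ref{tab:asymptotics}. Consequently it fits within the framework of Section~\ref{sec:model}. Setting $A(n)=\tfrac{1}{8n}\exp(\pi\sqrt{n})$ with a suitable choice of $\eps_n$ satisfying \eqref{eq:eps_ncondition}, the boundedness theorem established at the end of Section~\ref{sec:heuristics} (the analogue of part~(d) of Theorem~\ref{thm:introheuristic}) gives that $\{\mathbf{M}_{\overline{\mathbf{p}},k}(d)\}_{k=2}^{\infty}$ is bounded with probability $1$. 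This is the heuristic content of the conjecture and is the argument I would present as justification.

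To attempt a rigorous proof, I would fix $d\geq 0$ and aim for an explicit finite bound on $M_{\overline{p},k}(d)$ uniform in $k$. By Proposition~\ref{prop:multiple} one may restrict to prime exponents. For primes $k$ with $2^{k} > \overline{p}(n)+d$, the only $k$th power within distance $d$ of $\overline{p}(n)$ is $1^{k}$, which forces $\overline{p}(n)\leq d+1$ and hence $n\leq L_{\overline{p}}(d)$. The remaining range, roughly $2\leq k\lesssim \log_{2}\overline{p}(n)$, is the difficult one: there one must rule out $|\overline{p}(n) - m^{k}|\leq d$ for $m\geq 2$.

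The main obstacle is precisely this small-exponent range, which is a direct analogue of Sun's conjecture and remains open even in its original case $f=p$, $d=0$, $k=2$. A concrete route would be to exploit the arithmetic of $\overline{p}(n)$ modulo small primes, particularly its well-studied $2$-adic behavior, to obstruct closeness to $k$th powers by congruence conditions; however, current arithmetic tools fall short of producing bounds uniform in $k$. Absent a new arithmetic input of this kind, the probabilistic argument above is the most that I would expect to deliver, providing a heuristic confirmation rather than a proof.
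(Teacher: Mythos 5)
This statement is an open conjecture in the paper, not a proved result, and your proposal correctly recognizes that: the reduction via Proposition~\ref{prop:limit} to boundedness of $\{M_{\overline{p},k}(d)\}_{k=2}^{\infty}$, together with the probability-1 boundedness theorem from Section~\ref{sec:heuristics} applied to the overpartition asymptotic from Table~\ref{tab:asymptotics}, is exactly the heuristic justification the paper itself offers (cf.\ part~(d) of Theorem~\ref{thm:introheuristic} and Remark~\ref{rem:thm1.3}). Your assessment that the small-exponent range is the genuine obstruction and that no rigorous proof is currently available is accurate and consistent with the paper's treatment.
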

Further, for any given non-negative integers $n$ and $d$, we have that $L_{\overline{p}}(d) = n$ is repeated $\overline{p}(n + 1) - \overline{p}(n)$ times.

\begin{conj}
For each non-negative integer $d$, there is a positive integer $d$ such that for all $k \geq N_d$, we have 
\[
M_{p_r,k}(d) = L_{p_r}(d).
\]
\end{conj}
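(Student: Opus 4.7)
The plan is to reduce the conjecture to a boundedness assertion via Proposition \ref{prop:limit}. Applied to $f = p_r$, that proposition states: if $\{M_{p_r, k}(d)\}_{k=2}^{\infty}$ is bounded, then $\lim_{k \to \infty} M_{p_r, k}(d) = L_{p_r}(d)$, and in particular the desired $N_d$ exists. So the entire content of the conjecture is the boundedness of the sequence $\{M_{p_r, k}(d)\}_{k=2}^{\infty}$ for each fixed $d \geq 0$.

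First, I would establish the precise asymptotic for $p_r(n)$. A Meinardus-type theorem (or a direct application of the circle method, since the generating function for $p_r$ is just $\prod_{n \geq 1}(1-q^n)^{-r}$) yields an asymptotic of the form
\begin{equation*}
p_r(n) \sim C_r \, n^{-(r+3)/4} \exp\!\left(\pi \sqrt{\tfrac{2 r n}{3}}\right)
\end{equation*}
for an explicit constant $C_r > 0$. This places $p_r(n)$ in the class $A(n) = a n^{-b} \exp(c n^{\beta})$ with $\beta = 1/2$ considered in Section~\ref{sec:model}, and in particular $\sum A(n)^{-1/2}$ converges. Combined with Proposition~\ref{prop:multiple}, it then suffices to bound $M_{p_r, p}(d)$ uniformly in primes $p$, since $M_{p_r, k}(d) \leq M_{p_r, p}(d)$ whenever $p \mid k$.

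To attempt the boundedness directly, I would try the following strategy. Partition the primes $p$ into a small range $2 \leq p \leq K_d$ and a large range $p > K_d$, where $K_d$ will be chosen shortly. For each fixed small $p$, the results of Section~\ref{sec:DMbounds} (the analogue of Theorem~\ref{thm:Mkd_intro} applied to $p_r$) give an explicit upper bound on $M_{p_r, p}(d)$ of order $(\log d)^2$ that is not automatic. What is needed is a matching unconditional upper bound; here one could try to leverage Baker-type lower bounds for $|p_r(n) - m^p|$ via linear forms in logarithms, using that $p_r(n)$ has a known $q$-series expansion and $\log p_r(n)$ grows like $\pi\sqrt{2rn/3}$. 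For the large-$p$ range, one chooses $K_d$ so that for every $n$ with $p_r(n) > 1 + d$ and every $k > K_d$, the gap between consecutive $k$-th powers near $p_r(n)$ exceeds $2d$; a short calculation using the asymptotic above shows $K_d = O(\log d)$ suffices.

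The main obstacle is the small-$p$ range: proving an unconditional upper bound on $M_{p_r, p}(d)$ for each individual small prime $p$ is essentially of the same depth as the partition-function cases already left open by Merca--Ono--Tsai, and a full resolution appears to require either new transcendence estimates or a Pillai-type input ruling out dense clustering of $p$-th powers near the values $p_r(n)$. Heuristically, however, the machinery of Section~\ref{sec:heuristics} applies: modelling $p_r(n)$ by the random variables $\mathbf{f}_n$ of Section~\ref{sec:model} with $A(n) = C_r n^{-(r+3)/4} \exp(\pi \sqrt{2 r n / 3})$ and an admissible sequence $\eps_n \gg n \exp(-\tfrac{\pi}{2}\sqrt{2rn/3})$, the proof of Theorem~\ref{thm:introheuristic}(d) carries through verbatim and yields that $\{\mathbf{M}_{\mathbf{f}, k}(d)\}_{k=2}^{\infty}$ is bounded with probability 1, providing strong heuristic support for the conjecture and identifying the rigorous step above as the sole obstruction.
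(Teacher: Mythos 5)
First, note that the statement you are asked about is presented in the paper as a \emph{conjecture} (in Section~\ref{sec:conjectures}), supported only by the computational data of Section~\ref{sec:computations} and, implicitly, by the heuristic Theorem on boundedness of $\mathbf{M}_{\mathbf{f},k}(d)$; the paper offers no proof of it. Your reduction is nevertheless the correct and intended one: by Proposition~\ref{prop:limit}, the conjecture is exactly equivalent to the boundedness of $\{M_{p_r,k}(d)\}_{k=2}^{\infty}$ for each fixed $d$, and Proposition~\ref{prop:multiple} correctly lets you restrict to prime exponents. Your Meinardus asymptotic $p_r(n)\sim C_r n^{-(r+3)/4}\exp(\pi\sqrt{2rn/3})$ is also correct, so $p_r$ does fit the framework of Sections~\ref{sec:model} and~\ref{sec:heuristics}, and the probability-one boundedness statement you invoke is a legitimate instance of the paper's general heuristic theorem.

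The genuine gap is the one you yourself flag: the boundedness of $M_{p_r,p}(d)$ over small primes $p$ is never established. The appeal to ``Baker-type lower bounds for $|p_r(n)-m^p|$ via linear forms in logarithms'' is not a workable step as written --- $p_r(n)$ is not an $S$-unit or a value of a fixed algebraic expression in $n$, so there is no linear form in logarithms to which Baker's theorem applies; this is precisely why no unconditional upper bound on $M_{f,k}(d)$ is known even for $f=p$ and $k=2$. The heuristic argument via the random model $\{\mathbf{f}_n\}$ gives a statement about the random variables $\mathbf{M}_{\mathbf{f},k}(d)$ holding with probability~1, which (as the paper itself is careful to stress) is not a statement about the deterministic sequence $p_r(n)$. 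So your write-up is an accurate account of why the statement is plausible and of where the difficulty lies, but it is not a proof, and it does not claim to be one; in that respect it matches the paper, which likewise leaves the statement as a conjecture. If you want to salvage a rigorous fragment, the large-exponent range is genuinely provable: your gap-between-consecutive-$k$th-powers argument shows $M_{p_r,k}(d)=L_{p_r}(d)$ for all $k$ exceeding some explicit $K_{d,n}$ \emph{for each fixed candidate $n$}, but turning this into boundedness uniformly in $n$ is exactly the open problem.
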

Further, for any given non-negative integers $n$ and $d$, we have that $L_{p_r}(d) = n$ is repeated $p_r(n + 1) - p_r(n)$ times.

We further take a different approach by considering the following function analogous to $\Delta_{f,k}(n)$.  Let 
\[
\widetilde{\Delta}_{f,a}(n) := \min \{ |f(n) - a^k| : k \in \N \}.
\]
Here instead of fixing the power $k$ and letting the base vary as we find the distance between $p(n)$ and the closest $k$th power, we instead fix the base $a$ and let the power vary as we find the distance between $p(n)$ and the closest power of $a$.  Table \ref{tab:tilde_delta} shows values of $\widetilde{\Delta}_{a}(n)$ for $2\leq a \leq 9$ when $f(n)=p(n)$.

\renewcommand{\arraystretch}{1.3}

\small

\begin{table}[h]
\centering
\begin{tabular}{|c|c|c|c|c|c|c|c|c|c|}
\hline
$n$ & $p(n)$&$\widetilde{\Delta}_{2}(n)$ & $\widetilde{\Delta}_{3}(n)$ & $\widetilde{\Delta}_{4}(n)$& $\widetilde{\Delta}_{5}(n)$& $\widetilde{\Delta}_{6}(n)$& $\widetilde{\Delta}_{7}(n)$& $\widetilde{\Delta}_{8}(n)$& $\widetilde{\Delta}_{9}(n)$ \\ \hline
10 & 42& 10& 15& 22& 17& 6& 7& 22& 33 \\ \hline
20 & 627& 115& 102& 371& 2& 411& 284& 115& 102\\ \hline
30 & 5604& 1508& 957& 1508& 2479& 2172& 3203& 1508& 957\\ \hline
40 & 37338& 4570& 17655& 20954& 21713& 9318& 20531& 4570& 21711 \\ \hline
50 & 204226& 57918& 27079& 57918& 126101& 75710& 86577& 57918& 145177\\ \hline
\end{tabular}
\caption{Sample values for $\widetilde{\Delta}_{a}(n)$ }
\label{tab:tilde_delta}
\end{table}

\normalsize

We further define the related function analogous to $M_{f,k}(d)$,
\[
\widetilde{M}_{f,a}(d)  := \max \{n\in\N: \widetilde{\Delta}_a(n)\leq d\},
\]
allowing $\infty$ as a value. Table \ref{tab:tild_M_a} show conjectured values for $\widetilde{M}_{f,a}(d)$ for $2\leq a \leq 8$ and $d\leq 10^{200}$ when $f(n)=p(n)$.  These values are necessarily true for $n\leq 150\,000$.

\small

\begin{table}[h!]
\centering
\begin{tabular}{|c|c|c|c|c|c|c|c|c|c|c|c|}
\hline
$d$ & $\widetilde{M}_{2}(d)$ & $\widetilde{M}_{3}(d)$&$\widetilde{M}_{4}(d)$ & $\widetilde{M}_{5}(d)$ & $\widetilde{M}_{6}(d)$ &$\widetilde{M}_{7}(d)$ &$\widetilde{M}_{8}(d)$ &$\widetilde{M}_{9}(d)$ & $\widetilde{M}_{10}(d)$ \\ \hline

$10^0$ & 7 & 2 & 7& 2& 5& 2& 5& 2& 13\\ \hline

$10^5$ & 65 & 71 & 61& 65& 64& 59& 65& 71& 60\\ \hline

$10^{10}$ & 170 & 170 & 170& 164& 196& 180& 156& 170& 160\\ \hline

$10^{20}$ & 526 & 566 & 513& 513& 518& 491& 513& 566& 527\\ \hline

$10^{30}$ & 1085 & 1007 & 1031& 1005& 987& 1043& 1013& 1007& 980\\ \hline

$10^{40}$ & 1722 & 1710 & 1722& 1706& 1655& 1667& 1677& 1710& 1711\\ \hline

$10^{50}$ & 2557 & 2550 & 2529& 2720& 2562& 2509& 2502& 2507& 2550\\ \hline

$10^{60}$ & 3652 & 3555 & 3619& 3575& 3496& 3519& 3586& 3555& 3552\\ \hline

$10^{70}$ & 4749& 4723& 4749& 4893& 4764& 4697& 4749& 4665& 4718\\ \hline

$10^{80}$ & 6072& 6056& 6030& 6114& 6115& 6043& 6072& 6056& 6047\\ \hline

$10^{90}$ & 7795& 7551& 7556& 7471& 7511& 7556& 7699& 7478& 7538\\ \hline

$10^{100}$ & 9358& 9210& 9358& 9445& 9182& 9237& 9358& 9210& 9192\\ \hline

$10^{110}$ & 11120& 11123& 11120& 10978& 11167& 11084& 11006& 11123& 11009\\ \hline

$10^{120}$ & 13345& 13017& 13157& 12911& 13023& 13099& 13345& 13017& 12988\\ \hline

$10^{130}$ & 15635& 15378& 15635& 15308& 15192& 15093& 15297& 15060& 15129\\ \hline

$10^{140}$ & 17453& 17476& 17453& 17573& 17528& 17427& 17382& 17476& 17433\\ \hline

$10^{150}$ & 20443& 20193& 20443& 19993& 19833& 19928& 19828& 20071& 20153\\ \hline

$10^{160}$ & 22840& 22585& 22840& 22569& 22697& 22595& 22676& 22585& 22526\\ \hline

$10^{170}$ & 25369& 25521& 25369& 25300& 25307& 25430& 25199& 25521& 25315\\ \hline

$10^{180}$ & 28670& 28490& 28395& 28397& 28293& 28688& 28670& 28345& 28267\\ \hline

$10^{190}$ & 31687& 31775& 31398& 31449& 31445& 31598& 31494& 31622& 31380\\ \hline

$10^{200}$ & 35057& 34916& 34753& 34656& 35024& 35217& 35057& 34756& 34656\\ \hline

\end{tabular}
\caption{Conjectured $\widetilde{M}_{a}(d)$ values}
\label{tab:tild_M_a}
\end{table}

\normalsize 

\newcommand{\etalchar}[1]{$^{#1}$}

\end{document}